\documentclass[10pt]{amsart}
\usepackage{amssymb,amsmath,amsthm,amsfonts}
\usepackage{graphicx,color,wrapfig,ytableau,mathtools}
\usepackage[foot]{amsaddr}
\usepackage{stmaryrd}
\usepackage{tikz,tikz-cd}
\usetikzlibrary{arrows.meta}
\usepackage{caption,enumerate}
\usepackage{todonotes}
\usepackage{hyperref,cleveref}
\usepackage[style=alphabetic,backend=biber]{biblatex}
\newcommand{\sh}[2]{\langle #1,#2\rangle}

\allowdisplaybreaks

\def\tr{{\rm tr}\,}

\newtheorem{theorem}{Theorem}
\newtheorem{definition}[theorem]{Definition}
\newtheorem{lemma}[theorem]{Lemma}
\newtheorem{proposition}[theorem]{Proposition}
\newtheorem{assumption}[theorem]{Assumption}
\newtheorem{remark}[theorem]{Remark}
\newtheorem{corollary}[theorem]{Corollary}
\newtheorem{conjecture}[theorem]{Conjecture}
\newtheorem{example}[theorem]{Example}

\newtheorem{theoremA}{Theorem}

\newtheorem{theoremB}{Theorem}

\newtheorem{theoremC}{Theorem}

\newcommand{\fg}{{\mathfrak g}}
\newcommand{\fsl}{{\mathfrak sl}}
\newcommand{\fb}{{\mathfrak b}}
\newcommand{\fp}{{\mathfrak p}}
\newcommand{\fh}{{\mathfrak h}}
\newcommand{\fn}{{\mathfrak n}}
\newcommand{\Dalpha}{D(2|1,\alpha)}
\newcommand{\Ddeg}{D(2|1,\mathrm{deg})}
\newcommand{\Daff}{D^{(1)}(2|1,\mathrm{deg})}
\newcommand{\spn}{\operatorname{span}}
\newcommand{\LInd}{\mathbf{L}^{(\bullet)}\widetilde{\Ind}}
\newcommand{\Lind}{\mathbf{L}\widetilde{\Ind}}
\newcommand{\Rcoind}{\mathbf{R}\widetilde{\coInd}}

\DeclareMathOperator{\Res}{Res}
\DeclareMathOperator{\Ind}{Ind}
\DeclareMathOperator{\ch}{ch}
\DeclareMathOperator{\coInd}{coInd}
\DeclareMathOperator{\Hom}{Hom}
\DeclareMathOperator{\U}{U}
\DeclareMathOperator{\Ext}{Ext}

\definecolor{evenroot}{RGB}{236, 72,153} % pink   : even roots
\definecolor{oddroot}{RGB}{250,204, 21}  % yellow : odd simple roots
\definecolor{oddtext}{RGB}{133, 77, 14}  % dark gold : odd labels
\definecolor{eventext}{RGB}{190, 24,93}  % dark pink : even labels
\definecolor{negmark}{RGB}{ 56,189,248}  % cyan : circled-minus marks
\definecolor{oddblue}{RGB}{ 59,130,246}  % blue : non-simple odd roots
\definecolor{oddbluetext}{RGB}{30, 64,175} % dark blue : their labels

\tikzset{
	rootfig/.style={
		x={(1.2cm,0cm)}, y={(0.54cm,0.66cm)}, z={(0cm,1.2cm)},
		line cap=round, line join=round,
		lab/.style={font=\small},
		wlab/.style={lab, anchor=west,        xshift= 5pt},
		elab/.style={lab, anchor=east,        xshift=-5pt},
		nwlab/.style={lab, anchor=north west, xshift= 4pt, yshift=-3pt},
		nelab/.style={lab, anchor=north east, xshift=-4pt, yshift=-3pt},
		selab/.style={lab, anchor=south east, xshift=-4pt, yshift= 3pt},
		slab/.style={lab, anchor=south,       yshift= 3pt},
		nlab/.style={lab, anchor=north,       yshift=-3pt},
		axis/.style={evenroot!70, line width=0.7pt},
		axistip/.style={axis, -{Latex[length=2.8mm,width=2.2mm]}},
		axishidden/.style={axis, dash pattern=on 2pt off 2.4pt},
		hidden/.style={black!45, dash pattern=on 2.5pt off 2pt},
		solid edge/.style={black!75, line width=0.6pt},
	}
}

\newcommand{\affil}[1]{\textsuperscript{#1}}
\makeatletter
\newcommand{\affiltext}[2]{
  \begingroup
  \renewcommand{\thefootnote}{#1}
  \phantomsection
  \footnotetext{#2}
  \endgroup
}
\makeatother

\title{Categorification of the genus two DAHA}
\author{Semeon~Arthamonov\affil{1}}
\author{Ievgen~Makedonskyi\affil{1}}
\author{Daniil~Sarafannikov\affil{2}}

\begin{document}
	
	\begin{abstract}
		We construct three derived endofunctors on the derived category of graded integrable representations of the current algebra of a flat degeneration of $D(2|1,\alpha)$, and prove that their classes in the Grothendieck group are the genus two Macdonald operators. Computing the graded $\Ext$ pairing explicitly, we deduce the self-adjointness of the genus two Macdonald operators and the orthogonality of genus $2$ Macdonald polynomials with respect to the certain skew-bilinear form.
	\end{abstract}
	
	\maketitle
	
    \affiltext{1}{Beijing Institute of Mathematical Sciences and Applications (BIMSA), Beijing, China. \textit{Email:} arthamonov@bimsa.cn, mak@bimsa.cn}
    \affiltext{2}{Qiuzhen College, Tsinghua University, Beijing, China. \textit{Email:} sardanis123@gmail.com}
	
	\section{Introduction}
	
	The double affine Hecke algebra of Cherednik \cite{Cherednik-2005} and its spherical subalgebra provide the algebraic framework for the theory of Macdonald polynomials. In the rank one case the spherical DAHA of type $A_1$ acts on the ring of symmetric Laurent polynomials in one variable, the Macdonald difference operator is one of its generators and the Macdonald polynomials are the eigenfunctions of this operator. From the topological point of view this algebra is attached to the once-punctured torus: its mapping class group $SL(2,\mathbb{Z})$ acts on the algebra by automorphisms.
	
	In \cite{ArthamonovShakirov-2019} a genus two counterpart of this picture was proposed. The construction of \cite{ArthamonovShakirov-2019} starts from three commuting $q$-difference operators $\hat O_{A_{12}}, \hat O_{A_{13}}, \hat O_{A_{23}}$ in three variables $X_1,X_2,X_3$ depending on two parameters $q$ and $t$. The algebra generated by these operators together with the operators of multiplication by $X_k+X_k^{-1}$ carries an action of the Dehn twists of the closed genus two surface by automorphisms, and these automorphisms satisfy all the relations of the mapping class group of that surface. The joint eigenfunctions of the three commuting operators are the genus two analogue of the $A_1$ Macdonald polynomials. Following \cite{ArthamonovShakirov-2019} we call $\hat O_{A_{ij}}$ the {\it genus two Macdonald operators}, the label $A_{ij}$ refers to a curve on the genus two surface.
	
	The purpose of the present paper is to categorify these operators, that is, to realize them as the classes of a family of derived functors between categories of representations. The mechanism we use is similar to one which was used in \cite{KKhM}: one takes a Lie superalgebra with a suitable family of parabolic subalgebras, considers the induction, restriction and coinduction functors between the corresponding categories of graded representations, and computes their action on the Grothendieck groups, which are identified with rings of Laurent polynomials by means of characters. The similar categorification was made in \cite{bezrukavnikov2012affine} using the geometric approach. It is interesting question to get such a geometric analogue of our results.
	
	The algebra in question is a degeneration of the affinization of the exceptional family of simple Lie superalgebras $D(2|1,\alpha)$. Recall that $D(2|1,\alpha)$ has dimension $(9|8)$, its even part is $\fsl_2\oplus\fsl_2\oplus\fsl_2$ and its rank is three. The whole dependence on the parameter $\alpha$ is concentrated in the Lie bracket component $S^2\fg_{\overline 1}\rightarrow \fg_{\overline 0}$ (as map of vector spaces), so that setting this component of the Lie bracket to zero produces a Lie superalgebra $\Ddeg$ which does not depend on $\alpha$. It is isomorphic to $D(2|1,0,0,0)$, see Definition~\ref{def:degeneratedLieSuperalgebra}. We denote by $\Daff$ the affinization of $\Ddeg$. It has four simple roots $\alpha_0,\alpha_1,\alpha_2,\alpha_3$, all of them odd.
	
	The degeneration is what makes the algebra convenient to work with, and it is also what creates the additional grading we need. Namely, $\Daff$ carries three compatible gradings: the weight grading with respect to the Cartan subalgebra, with lattice $\mathbb{Z}^3$, the grading by the degree in the loop variable $z$, and the $\mathbb{Z}$-grading in which the even part sits in degree $0$ and the odd part in degree $1$, which exists precisely because the bracket of two odd elements vanishes. We work with the representations graded with respect to all three gradings, and we denote the corresponding formal variables by $X_1,X_2,X_3$, by $q$ and by $-t$. The variables $X_k$ are naturally labelled by pairs: $X_k=X^{(\alpha_i+\alpha_j)/2}$ for $\{i,j,k\}=\{1,2,3\}$, which is exactly the labelling of the variables of \cite{ArthamonovShakirov-2019}. The two parameters $q$ and $t$ of \cite{ArthamonovShakirov-2019} thus acquire a representation theoretic meaning: $q$ is the loop grading and $-t$ is the odd grading.
	
	The parabolic subalgebras of a Kac--Moody superalgebra are defined by choosing a subset of the simple roots and adjoining the corresponding negative root vectors to the Borel subalgebra. In the degenerate case this recipe is not available, and we give a direct construction of the degenerate parabolic subalgebras $\fp_J$, $J\subset\{0,1,2,3\}$, in Definition~\ref{def:parabolicSubalgebras}. The main object of the paper is the parabolic subalgebra
	\[\fp_{123}\simeq \Ddeg\otimes\Bbbk[z]\]
	together with its three maximal parabolic subalgebras $\fp_{12},\fp_{13},\fp_{23}$. The restriction functor $\Res_{\fp_{ij}}^{\fp_{123}}$ has a left and a right adjoint, the integrable induction and coinduction functors:
	\begin{equation*}
		\begin{tikzcd}[column sep=4.2cm]
			\mathcal{O}^{gr}(\fp_{123})
			\arrow[r, "{\Res_{\fp_{ij}}^{\fp_{123}}}" description]
			& \mathcal{O}^{gr}(\fp_{ij})
			\arrow[l, bend right=17, "{\widetilde{\Ind}_{\fp_{ij}}^{\fp_{123}}}"']
			\arrow[l, bend left=17, "{\widetilde{\coInd}_{\fp_{ij}}^{\fp_{123}}}"]
		\end{tikzcd}
	\end{equation*}
	
	The first main property of this pair of algebras is that the induction and the coinduction differ only by a shift of the gradings.
	
	\begin{theoremA}
		\begin{equation}\label{eq:IndCoind}
			{\Rcoind}_{\fp_{ij}}^{\fp_{123}}\simeq {\Lind}_{\fp_{ij}}^{\fp_{123}}\sh{0}{-2}[-2].
		\end{equation}
	\end{theoremA}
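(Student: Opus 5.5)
We prove the isomorphism by reducing both sides to an explicit Koszul-type computation over the odd part that $\fp_{123}$ has and $\fp_{ij}$ lacks, and then invoking a Serre-duality (Frobenius reciprocity) argument for a finite-dimensional "relative" quotient.

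\emph{Step 1: the relative quotient.} First identify $\fp_{123}/\fp_{ij}$ as a graded $\fp_{ij}$-module. Since the degeneration kills the odd-odd bracket, the complement should be a small piece of $\Ddeg\otimes\Bbbk[z]$: an odd part of dimension two, namely the root vectors $e_{-\alpha_k}$ and their partners not in $\fp_{ij}$, together with possibly one even $\fsl_2$ lowering direction. Read off from Definition~\ref{def:parabolicSubalgebras} its top exterior (Berezinian) character; this character should be exactly the shift $\sh{0}{-2}$. Here the odd grading contributes $(-t)^{2}$-type weight, i.e.\ two odd generators each in odd degree $1$, and the homological shift $[-2]$ comes from the same two odd directions.

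\emph{Step 2: resolve induction and coinduction.} Write the integrable induction as the composite of ordinary induction $\U(\fp_{123})\otimes_{\U(\fp_{ij})}-$ followed by the integrable truncation functor. Write coinduction dually as $\Hom_{\U(\fp_{ij})}(\U(\fp_{123}),-)$ followed by the integrable part. By PBW, $\U(\fp_{123})$ is free over $\U(\fp_{ij})$ with basis $\Lambda(\mathfrak{m})\otimes S(\mathfrak{m}_{\bar 0})$, where $\mathfrak m$ is the complement. The odd part is finite, so $\Hom_{\U(\fp_{ij})}(\U(\fp_{123}),M)\cong \U(\fp_{123})\otimes_{\U(\fp_{ij})}(M\otimes \Lambda^{top}\mathfrak m_{\bar1}^{*})$ up to the even part. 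This is the standard super Frobenius twist (for odd directions, induction and coinduction agree up to the Berezinian). For the even $\fsl_2$-type direction, we use that derived integrable induction from a Borel-type piece is computed by Borel--Weil--Bott on $\mathbb P^1$. Serre duality on $\mathbb P^1$ then exchanges $\mathbf L$ and $\mathbf R$ with a twist by $\mathcal O(-2)$ and a cohomological shift $[1]$.

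\emph{Step 3: assemble.} Combine the odd Frobenius twist with the $\mathbb P^1$ Serre duality. The resulting weight twists must cancel in the $X$-grading, which is the key consistency check, since the statement has no $X$-shift. The remaining $q$- and odd gradings and homological shifts should sum to $\sh{0}{-2}[-2]$. Naturality in $M$ follows because all identifications are given by canonical pairings. To pass to derived functors, check on a generating class (for instance modules induced from one-dimensional $\fp_{ij}$-modules, which are acyclic for both functors) and extend by standard adapted-class arguments.

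\emph{Main obstacle.} The hardest step will be Step 2 for the even direction: making the "integrable truncation" genuinely derived and showing that its left and right derived versions are related by Serre duality. This is delicate because $\fp_{ij}$ is a degenerate parabolic, not defined by the usual recipe. We would first try to factor $\fp_{ij}\subset\fp_{123}$ through an intermediate subalgebra in which the even $\fsl_2$ appears as a standard parabolic. That would let classical BGG/BWB computations apply, with the odd directions handled separately by the exterior-algebra Koszul argument.
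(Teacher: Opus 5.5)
\textbf{Verdict: genuine gap in the bookkeeping.} Your overall plan matches the paper's architecture. You isolate the odd directions, where induction and coinduction differ by a Berezinian twist. You treat the even directions by Bott--Borel--Weil and Serre duality on $\mathbb{P}^1$. Then you check that the weight twists cancel. But the bookkeeping in Steps 1 and 3 is wrong, and that is exactly where the content of the statement sits. As written, the argument cannot produce $\sh{0}{-2}[-2]$.

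\emph{The complement.} The complement of $\fp_{ij}$ in $\fp_{123}$ has dimension $(2|2)$. It is spanned by the odd vectors $e_{-\alpha_k}, e_{-\theta}$ and the two even vectors $f_i=e_{-\alpha_j-\alpha_k}$, $f_j=e_{-\alpha_i-\alpha_k}$. Only $f_k=e_{-\alpha_i-\alpha_j}$ already lies in $\fp_{ij}$, so there are two even directions, not "possibly one".

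\emph{Where the shifts come from.} Contrary to your claim that "the homological shift $[-2]$ comes from the same two odd directions", the odd directions give no homological shift. Induction and coinduction along a finite purely odd extension are both exact. They differ only by tensoring with the top exterior power of the odd complement, i.e.\ by $\sh{0}{\mp 2}$ together with a weight twist by $\pm(\alpha_k+\theta)=\pm(2\varepsilon_i+2\varepsilon_j)$. The shift $[-2]$ comes from the two commuting copies of $\fsl_2$ indexed by $i$ and $j$. Each contributes $[-1]$ and a twist by $V(-2\varepsilon_i)$, resp.\ $V(-2\varepsilon_j)$. The combined even twist $V(-2\varepsilon_i-2\varepsilon_j)$ is exactly what cancels the Berezinian weight.

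\emph{Why your count fails.} With your count, a residual weight $2\varepsilon_j$ (or $2\varepsilon_i$) survives. The homological shift would come out as $[-1]$, or $[-3]$ if you also attribute $[-2]$ to the odd part. So your own consistency check in Step 3 would fail.

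\emph{The Frobenius step.} The identification $\Hom_{\U(\fp_{ij})}(\U(\fp_{123}),M)\cong\U(\fp_{123})\otimes_{\U(\fp_{ij})}(M\otimes\Lambda^{top}\mathfrak{m}_{\bar 1}^*)$ ``up to the even part'' is not a statement you can make for the full extension. Along the even directions $S(\mathfrak{m}_{\bar 0})$ is infinite-dimensional, and ordinary induction and coinduction are genuinely different there. So the factorization you mention only as a fallback is necessary:
\begin{enumerate}
\item Set $\fp_{ij}^+:=\fp_{ij}\oplus\langle e_{-\alpha_k},e_{-\theta}\rangle$ and check it is a subalgebra. Odd--odd brackets vanish, and for example $[f_k,e_{-\alpha_k}]\sim e_{-\theta}$ and $[e_{2\varepsilon_i},e_{-\alpha_k}]\sim e_{\alpha_j}$.
\item Apply the Berezinian twist to the odd step $\fp_{ij}\subset\fp_{ij}^+$.
\item Apply the $\fsl_2$ relation $\Rcoind\simeq\Lind\circ(V(-2\varepsilon)\otimes\bullet)[-1]$ twice to $\fp_{ij}^+\subset\fp_{123}$, whose complement is exactly $\langle f_i,f_j\rangle$.
\end{enumerate}
Composing these isomorphisms of functors gives the statement directly, using that tensoring by one-dimensional modules commutes with induction. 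No separate check on a generating class of acyclic objects is needed. This is the paper's proof.
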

	
	The same property holds for the pairs $\fb\otimes\Bbbk[\xi]\subset\fp_i\otimes\Bbbk[\xi]$, where $\fb$ and $\fp_i$ are a Borel and a minimal parabolic subalgebra of a semisimple Lie algebra and $\xi$ is an odd variable. It would be interesting to describe all the pairs of Lie superalgebras with this property, we do not know how large this class is.
	
	The Lie superalgebra $D^{(1)}(2|1,\alpha)$ carries an action of the symmetric group $\mathfrak{S}_4$ permuting the four simple roots. On the non-degenerate algebra these maps change the parameter $\alpha$ and hence are not automorphisms. After the degeneration the dependence on $\alpha$ disappears, and we obtain an action of $\mathfrak{S}_4$ on $\Daff$ by automorphisms. Twisting by an automorphism is an exact functor, and the automorphism $\pi_{(i,j)(k,0)}$ preserves the parabolic subalgebra $\fp_{ij}$, so it gives an exact endofunctor of $\mathcal{O}^{gr}(\fp_{ij})$. The functors we study are the compositions
	\[\mathcal{F}_{ij}:=\Lind_{\fp_{ij}}^{\fp_{123}} \circ \pi_{(i,j)(k,0)}\circ {\Res}_{\fp_{ij}}^{\fp_{123}},\]
	see Definition~\ref{def:Fdefinition}. Our main categorification result is the following.
	
	\begin{theoremB}
		\[[\mathcal{F}_{ij}]=t\, \hat O_{A_{ij}}.\]
	\end{theoremB}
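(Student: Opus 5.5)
The plan is to compute $[\mathcal{F}_{ij}]$ one factor at a time on the Grothendieck group. The three factors are $\Res$, the twist $\pi_{(i,j)(k,0)}$, and $\Lind$. We identify $K_0(\mathcal{O}^{gr}(\fp_{123}))$ with (a completion of) the $\mathfrak{S}_3$-invariant Laurent polynomials in $X_1,X_2,X_3$ with coefficients in $q,t$, via graded characters. We identify $K_0(\mathcal{O}^{gr}(\fp_{ij}))$ with Laurent polynomials invariant only under the reflections in the Levi of $\fp_{ij}$.

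\textbf{Step 1: restriction and twist.} On characters, $[\Res]$ is the inclusion of invariants. Twisting by the automorphism $\pi_{(i,j)(k,0)}$ permutes weights, so on characters it acts by a monomial substitution. Since it exchanges $\alpha_k$ with $\alpha_0=\delta-\theta$, it sends $X_k\mapsto q^{\pm1/2}X_k^{-1}$-type expressions and fixes the other variables up to powers of $q$. We would compute this substitution explicitly from Definition~\ref{def:parabolicSubalgebras} and the $\mathfrak{S}_4$-action. It is the genus two analogue of the shift operator $X\mapsto qX$.

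\textbf{Step 2: derived induction.} We compute $[\Lind_{\fp_{ij}}^{\fp_{123}}]$ as an Euler characteristic. The approach is to factor the passage from $\fp_{ij}$ to $\fp_{123}$ through an ordinary induction $U(\fp_{123})\otimes_{U(\fp_{ij})}-$, followed by the integrable quotient (the largest quotient integrable for the extra $\fsl_2$). The ordinary induction is exact. Its character is multiplication by the character of $U(\fp_{123})/U(\fp_{ij})$, i.e.\ of $\Lambda$ or $S$ of the quotient $\fp_{123}/\fp_{ij}$. The odd part contributes factors $(1 - tX^{\beta}q^{m})$ and the even part contributes $1/(1-X^{\beta}q^m)$.

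The integrable truncation's derived functor is then a Demazure/Weyl-type symmetrization $f\mapsto \sum_{w}w\bigl(f/\prod(1-X^{-\alpha})\bigr)$ over the relevant $\fsl_2$, by a BGG/Bott–Borel–Weil type argument for the degenerate current algebra. Theorem~A helps here. Since $\Rcoind\simeq\Lind\sh{0}{-2}[-2]$, the Euler characteristic can be computed on either side, and coinduction is easier to control via Frobenius reciprocity and Ext-vanishing. We would use it to cut the infinitely many graded pieces down to a finite Koszul-type complex.

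\textbf{Step 3: assembly.} Composing the factors gives an operator of the form
\[
f\mapsto \sum_{\epsilon}\;\Phi_\epsilon(X;q,t)\, f(\ldots,q^{\epsilon}X\ldots).
\]
Here the coefficient $\Phi_\epsilon$ is a ratio of products of factors $(1-tX_aX_bX_c\cdots)$ over $(1-X\cdots)$, arising from the odd roots of $\fp_{123}/\fp_{ij}$. We then compare term by term with the explicit formula for $\hat O_{A_{ij}}$ in \cite{ArthamonovShakirov-2019}, the global factor $t$ coming from the normalization of the grading shift.

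\textbf{Main obstacle.} The hard step is Step 2: proving that the higher derived functors of integrable induction contribute exactly the symmetrizer, with no extra terms. This requires a vanishing theorem for the degenerate superalgebra. I would try first to reduce to the $\fsl_2\otimes\Bbbk[z]$-part, using that the odd bracket vanishes so that the odd quotient splits off as an exterior algebra tensor factor. I would then quote the known Euler characteristic of derived Demazure-type functors for the current $\fsl_2$.
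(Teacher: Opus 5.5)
Your outline (restriction is an inclusion, the twist is a monomial substitution, $[\Lind]$ is an odd exterior factor followed by a Weyl symmetrizer, then compare with \cite{ArthamonovShakirov-2019}) follows the paper's route. However, several of your concrete identifications are wrong, and with them Step~3 cannot produce the four-term operator $\sum_{a,b\in\{\pm1\}}C^{a,b}_{i,j}\delta_i^a\delta_j^b$.

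The Weyl group is $\langle s_1,s_2,s_3\rangle\simeq(\mathbb{Z}/2\mathbb{Z})^3$ with $s_l\colon X_l\mapsto X_l^{-1}$, not $\mathfrak{S}_3$. The reductive part of $\fp_{ij}$ is $\fh+\fsl_2^{(k)}$, because $-\alpha_i-\alpha_j=-2\varepsilon_k$. So there are \emph{two} extra $\fsl_2$'s, indexed by $i$ and $j$. Moreover $\fp_{123}/\fp_{ij}$ is $(2|2)$-dimensional, spanned by $f_i,f_j,e_{-\alpha_k},e_{-\theta}$, all in $z$-degree $0$. Hence there are no factors $q^m$, no current $\fsl_2$, no infinitely many graded pieces, and no need to invoke Theorem~A.

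The twist is also the other way round. $\pi_{(i,j)(k,0)}$ fixes $\alpha_i+\alpha_j=2\varepsilon_k$, hence fixes $X_k$, and sends $X_i\mapsto q^{1/2}X_i^{-1}$, $X_j\mapsto q^{1/2}X_j^{-1}$, which is the opposite of your guess. The four terms of $\hat O_{A_{ij}}$ come exactly from the symmetrizer over $\langle s_i,s_j\rangle$ composed with a twist inverting $X_i$ and $X_j$. With a single extra $\fsl_2$, or a twist acting on $X_k$, the composite cannot have this shape.

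Your ``main obstacle'' is handled in the paper without any vanishing theorem for a current algebra. Factor through $\fp_{ij}^+=\fp_{ij}\oplus\langle e_{-\alpha_k},e_{-\theta}\rangle$. The first induction is exact and multiplies characters by $(1-tX^{-\alpha_k})(1-tX^{-\theta})$. The second is $\fsl_2$ Bott--Borel--Weil for the two commuting $\fsl_2$'s, which bounds the homological dimension by $2$. For $\mu_i,\mu_j\ge 2$ all four composition factors are dominant for $h_i,h_j$, hence acyclic. So on these modules $[\widetilde{\Ind}_{\fp_{ij}}^{\fp_{123}}]$ agrees with
\[F_{ij}f=\sum_{w\in\langle s_i,s_j\rangle}w\Bigl(\frac{f\,(1-tX^{-\alpha_k})(1-tX^{-\theta})}{(1-X_i^{-2})(1-X_j^{-2})}\Bigr).\]
Note that each even denominator enters once, not once from $S(\fn_{\bar 0})$ and again from the symmetrizer as in your Step~2. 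By the tensor identity, $[\widetilde{\Ind}_{\fp_{ij}}^{\fp_{123}}]$ commutes with multiplication by $\fp_{123}$-characters, and so does $F_{ij}$. Multiplying by $X_i+X_i^{-1}$ and $X_j+X_j^{-1}$ then propagates the equality downward to all $\mu$.

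Step~3 also needs an identity you do not mention. For $s_1,s_2,s_3$-invariant $f$ one has $\delta_i^a\delta_j^bf=s_i^{(a+1)/2}s_j^{(b+1)/2}\pi f$, so $\hat O_{A_{ij}}=\sum_{a,b}C^{a,b}_{i,j}s_i^{(a+1)/2}s_j^{(b+1)/2}\pi$. Commuting $(1-tX^{-\alpha_k})(1-tX^{-\theta})$ past $s_i^{(a+1)/2}s_j^{(b+1)/2}$ produces the four numerators of $C^{a,b}_{i,j}$, giving $t^{-1}F_{ij}\pi$. The global factor $t$ is the $t$ in the denominator of $C^{a,b}_{i,j}$, not a grading shift; $\mathcal{F}_{ij}$ carries none.
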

	
	Thus the functors $\mathcal{F}_{ij}$ categorify the genus two Macdonald operators of \cite{ArthamonovShakirov-2019}. It is natural to ask whether the relations satisfied by these operators can be lifted to isomorphisms of functors. The commutativity of the operators $\hat O_{A_{ij}}$ is proved in \cite{ArthamonovShakirov-2019}, so that the classes $[\mathcal{F}_{ij}]$ commute. We expect this to hold on the categorical level.
	
	\begin{conjecture}
		For any $i,j,k$ there is an isomorphism of functors
		\[\mathcal{F}_{ij}\circ\mathcal{F}_{ik}\simeq \mathcal{F}_{ik}\circ \mathcal{F}_{ij}.\]
	\end{conjecture}
	
	Categorification gives more than the operators themselves. The category
	$\mathcal{O}^{gr}(\fp_{123})$ carries a natural graded $\Ext$ pairing
	\[(M,N)_{\Ext}:=\sum_{i=0}^{\infty}(-1)^i\dim_{q,t}\bigl(\Ext^{i}_{gr}(M ,N)\bigr),\]
	see Subsection~\ref{ssec:ExtPairing}, and the adjunctions between the functors above turn into a statement about the operators $\hat O_{A_{ij}}$. This form is additive with respect to short exact sequences in each argument, hence descends to a pairing on the Grothendieck group, and it is skew-linear in the second argument with respect to the substitution $f^\star=f|_{t\mapsto t^{-1},q\mapsto q^{-1}}$. Identifying $K_0(\mathcal{O}^{gr}(\fp_{123}))$ with a ring of symmetric Laurent polynomials by means of characters, this pairing takes the explicit form
	\[(f,g)=\bigl(f\,\Delta\, g^\star\bigr)_0,\qquad \Delta:=(q;q)^3\frac{\prod_{l \geq 0}\prod_{i=1}^3(1-X_i^2 q^l)(1-X_i^{-2} q^l)}{\prod_{l \geq 0}\prod_{a,b,c\in\{\pm 1\}}(1-X_1^aX_2^bX_3^c q^lt)},\]
	where $(\cdot)_0$ denotes the constant term with respect to the variables $X_i$, see Lemma~\ref{lem:ExtPairingExplicit}. The density $\Delta$ is the genus two counterpart of the Macdonald weight function: the numerator is the contribution of the even part of the algebra and the denominator is the contribution of the odd part.
	
	The property \eqref{eq:IndCoind} says exactly that a suitably normalized version of $\mathcal{F}_{ij}$ is self-adjoint with respect to this pairing. Set
	\[\mathcal{G}_{ij}:=\mathcal{F}_{ij}\sh{0}{-1}[-1].\]
	Then
	\[\Hom_{gr}(\mathcal{G}_{ij}M,N)\simeq \Hom_{gr}(M,\mathcal{G}_{ij}N),\qquad [\mathcal{G}_{ij}]=\hat O_{A_{ij}},\]
	and passing to the Grothendieck group we obtain the following property of the genus two Macdonald operators.
	
	\begin{theoremC}
		\[\bigl(\hat O_{A_{ij}}f\cdot \Delta\, g^{\star}\bigr)_0
		=\bigl(f\cdot \Delta\,(\hat O_{A_{ij}}g)^{\star}\bigr)_0.\]
	\end{theoremC}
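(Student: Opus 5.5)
We deduce Theorem C from the self-adjointness of $\mathcal{G}_{ij}$ at the categorical level, combined with Theorem B and the explicit form of the $\Ext$ pairing (Lemma~\ref{lem:ExtPairingExplicit}). Set $\mathcal{G}_{ij}=\mathcal{F}_{ij}\sh{0}{-1}[-1]$ and take the derived version of the adjunction chain. By the induction/restriction adjunction we have
\[
\Hom(\Lind\,\pi\,\Res\, M, N)\simeq \Hom(\pi\Res M,\Res N).
\]
Since $\pi=\pi_{(i,j)(k,0)}$ is an involutive automorphism preserving $\fp_{ij}$, this equals $\Hom(\Res M,\pi\Res N)$. By the restriction/coinduction adjunction this is in turn $\Hom(M,\Rcoind\,\pi\Res N)$.

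Using Theorem A, $\Rcoind\simeq\Lind\sh{0}{-2}[-2]$, we get the isomorphism in the derived category
\[
\mathbf{R}\Hom(\mathcal{F}_{ij}M,N)\simeq \mathbf{R}\Hom(M,\mathcal{F}_{ij}N\sh{0}{-2}[-2]).
\]
Redistributing the shift, with half on each side, gives
\[
\mathbf{R}\Hom(\mathcal{G}_{ij}M,N)\simeq\mathbf{R}\Hom(M,\mathcal{G}_{ij}N).
\]
The shift $\sh{0}{-1}[-1]$ on the first argument becomes its inverse when moved into the second argument. We also need to check that $\pi$ commutes with the relevant grading shifts, which holds because it preserves the odd and loop gradings.

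Next we pass to Euler characteristics. Taking $\sum(-1)^i\dim_{q,t}\Ext^i$ of both sides gives
\[
([\mathcal{G}_{ij}]M,N)_{\Ext}=(M,[\mathcal{G}_{ij}]N)_{\Ext}.
\]
Here the key checks concern how the shifts act on the pairing. The cohomological shift $[1]$ acts by $-1$. The grading shift $\sh{0}{1}$ acts on $[M]$ by a factor of $-t$ or $(-t)^{-1}$, and it contributes in the first argument linearly and in the second argument skew-linearly (through $\star$). This is exactly why the normalization of $\mathcal{G}_{ij}$ is symmetric. By Theorem B, $[\mathcal{G}_{ij}]=(-t)^{-1}\cdot(-1)\cdot t\,\hat O_{A_{ij}}=\hat O_{A_{ij}}$, up to the sign conventions, which have to match the stated identity $[\mathcal{G}_{ij}]=\hat O_{A_{ij}}$. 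Lemma~\ref{lem:ExtPairingExplicit} then turns both sides into constant terms of $f\Delta g^\star$, which yields the formula for $f=[M]$ and $g=[N]$.

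Finally, we extend from classes of modules to all $f,g$. The pairing and the operator are (skew)linear, and classes of modules span the Grothendieck group, which is identified with the symmetric Laurent polynomials completed in the appropriate direction. We also need convergence of the $\Ext$ sums, i.e.\ that the graded $\Ext$ spaces are finite-dimensional in each bidegree. This is already implicit in Lemma~\ref{lem:ExtPairingExplicit}.

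The main obstacle is the bookkeeping of the shifts. One must verify that moving $\sh{0}{-1}[-1]$ from one argument to the other produces exactly the factors $(-t)^{\pm1}$ that cancel under $\star$, with no residual sign, so that the equation holds without any extra factor. One must also make sure the derived adjunctions are valid on the bounded-below derived categories used here, in particular that $\pi$ commutes with $\Res$ and $\Lind$ in the required way.
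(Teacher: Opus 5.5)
Your proposal is correct and is essentially the paper's argument. The paper runs the same adjunction chain and uses Theorem~A to get $[{\Rcoind}_{\fp_{ij}}^{\fp_{123}}\circ\pi\circ\Res_{\fp_{ij}}^{\fp_{123}}]=t^{-1}\hat O_{A_{ij}}$; it then cancels $t$ from $(t\hat O_{A_{ij}}f,g)=(f,t^{-1}\hat O_{A_{ij}}g)$ by skew-linearity, which is your redistribution of $\sh{0}{-1}[-1]$ into $\mathcal{G}_{ij}$ done after decategorifying rather than before. One small slip: $\pi$ does not preserve the loop grading, since it shifts $z$-degrees by weight-dependent amounts (hence the $q^{1/2}$ in $[\pi]$); twisting by $\pi$ still commutes with all shifts $\sh{l}{r}$, so your argument is unaffected.
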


	As a consequence, the joint eigenfunctions of the three operators $\hat O_{A_{12}},\hat O_{A_{13}},\hat O_{A_{23}}$ are pairwise orthogonal with respect to $\Delta$. These eigenfunctions were constructed in \cite{ArthamonovShakirov-2019}, where they are shown to be indexed by triples $\mathbf{i}$ of non-negative integers and to specialize to the $A_1$ Macdonald polynomials, we denote them by $\Psi_{\mathbf{i}}$ and refer to \cite{ArthamonovShakirov-2019} for their construction and their basic properties. For $\mathbf{i}\neq\mathbf{j}$ one has
	\[\bigl( \Psi_{\mathbf{j}}\,\Delta\, \Psi^{\star}_{\mathbf{i}} \bigr)_0=0,\]
	see Corollary~\ref{cor:Orthogonality}. This is the genus two analogue of the orthogonality of Macdonald polynomials with respect to the Macdonald weight.
	
	The paper is organized as follows. In Section~\ref{sec:LieSuperalgebras} we recall the generalities on Lie superalgebras, their root systems and parabolic subalgebras, and then introduce $\Ddeg$, its affinization $\Daff$, the parabolic subalgebras $\fp_J$ and the diagrammatic automorphisms. Section~\ref{sec:Categories} contains the homological preliminaries, the definition of the categories of representations we work with, the computation of their Grothendieck groups in terms of characters and the definition of the $\Ext$ pairing. Section~\ref{sec:Functors} deals with the induction, restriction and coinduction functors, first in general and then for the parabolic subalgebras of $\Daff$, the main computation there is Lemma~\ref{lem:InductionOperator}. Section~\ref{sec:Categorification} recalls the genus two Macdonald operators of \cite{ArthamonovShakirov-2019} and contains the categorification theorem. Section~\ref{sec:Orthogonality} contains the comparison \eqref{eq:IndCoind} of induction and coinduction and the orthogonality statements. %Section~\ref{sec:Discussion} collects open questions.
	
	\section{Lie superalgebras}\label{sec:LieSuperalgebras}
	
	Throughout the paper $\Bbbk$ denotes an algebraically closed field of characteristic zero. Unless stated otherwise, all the vector spaces, algebras and tensor products are taken over $\Bbbk$.
	
	In this section we first recall the general notions concerning Lie superalgebras, their root systems and parabolic subalgebras. We then specialize to the Lie superalgebra which is the main object of this paper, namely to a flat degeneration of $\Dalpha$ and to its affinization. We do not prove here any of the general statements about Lie superalgebras, the standard references are \cite{Mus} and \cite{DictionaryOn}.
	
	\subsection{Generalities}
	Recall that a Lie superalgebra is a $\mathbb{Z}/2\mathbb{Z}$-graded $\Bbbk$-linear space $\fg = \fg_{\overline{0}}\oplus\fg_{\overline{1}}$ together with a bilinear map $\left[\ ,\ \right]:\fg\otimes\fg\rightarrow\fg$ such that $\left[\fg_{\overline{i}}, \fg_{\overline{j}}\right]\subset\fg_{\overline{i+j}}$ and such that for all homogeneous elements $X, Y, Z \in \fg$ the following identities hold:
	\begin{align*}
		\left[X,Y\right]=-(-1)^{\deg{X}\deg{Y}}\left[Y,X\right],\\
		\left[X,\left[Y,Z\right]\right]=\left[\left[X,Y\right],Z\right]+(-1)^{\deg{X}\deg{Y}}\left[Y,\left[X,Z\right]\right].
	\end{align*}
	Here and below $\deg X \in \mathbb{Z}/2\mathbb{Z}$ denotes the parity of a homogeneous element $X$, and all the identities involving parities are stated for homogeneous elements and extended by linearity.
	
	In particular $\fg_{\overline 0}$ is an ordinary Lie algebra and $\fg_{\overline 1}$ is a $\fg_{\overline 0}$-module. A module over a Lie superalgebra is always assumed to be $\mathbb{Z}/2\mathbb{Z}$-graded and the action is assumed to respect this grading. We denote by $\U(\fg)$ the universal enveloping algebra of $\fg$. It is an associative superalgebra and the Poincar\'e--Birkhoff--Witt theorem holds for it in the form
	\[\U(\fg)\simeq \U(\fg_{\overline 0})\otimes \Lambda(\fg_{\overline 1})\]
	as $\mathbb{Z}/2\mathbb{Z}$-graded vector spaces, see \cite[Chapter~6]{Mus}.
	
	Throughout the paper we deal with Lie superalgebras which admit a semidirect decomposition
	\begin{equation}\label{eq:ReductivePart}
		\fg \simeq \fg^{red}\ltimes R,
	\end{equation}
	where $R$ is a (pro-)nilpotent ideal and $\fg^{red}$ is a reductive Lie algebra contained in $\fg_{\overline 0}$. We call $\fg^{red}$ the {\it reductive part} of $\fg$ and $R$ the {\it radical} of $\fg$. This decomposition is a part of the data and is fixed once and for all for each algebra we consider.
	
	\subsection{Root systems, Borel and parabolic subalgebras}
	Let $\fh \subset \fg_{\overline 0}$ be a Cartan subalgebra of the reductive part $\fg^{red}$ and assume that $\fh$ acts semisimply on $\fg$ by the adjoint action. Then we have the root decomposition
	\[\fg=\fh \oplus \bigoplus_{\gamma \in \Phi}\fg_{\gamma},\qquad \fg_\gamma:=\{X \in \fg \mid [H,X]=\gamma(H)X ~\text{for all}~ H \in \fh\},\]
	and the root system $\Phi \subset \fh^*$ splits into the even and the odd part
	\[\Phi=\Phi_{\overline 0}\sqcup \Phi_{\overline 1},\qquad \Phi_{\overline i}:=\{\gamma \in \Phi\mid \fg_\gamma \cap \fg_{\overline i}\neq 0\}.\]
	A choice of a Borel subalgebra $\fb \supset \fh$ is the same thing as a choice of a subset $\Phi_+\subset \Phi$ of positive roots together with the corresponding subset of simple roots.
	
	In contrast with the case of semisimple Lie algebras, for a Lie superalgebra different systems of simple roots need not be conjugate to each other, so that a Lie superalgebra has in general several non-equivalent Dynkin diagrams, see \cite[Chapter~3]{DictionaryOn} and \cite{DrinfieldSecond}. In this paper we do not need this flexibility: we fix one particular system of simple roots, described in Subsection~\ref{ssec:Ddeg} below, and all the constructions of the paper refer to this fixed choice.
	
	\begin{definition}\label{def:WeylGroupConvention}
		Throughout the paper the {\it Weyl group} $W$ of a Lie superalgebra $\fg$ means the Weyl group of the root system $\Phi_{\overline 0}$ of the reductive part $\fg^{red}$, that is, the group generated by the reflections $s_\gamma$, $\gamma \in \Phi_{\overline 0}$.
	\end{definition}
	
	Note that $W$ acts on $\fh^*$ and preserves $\Phi$, but it does not act transitively on the set of the systems of simple roots. All the notions such as dominance of a weight and integrability of a module refer to this group.
	
	Finally, we use the following notational convention for parabolic subalgebras: for a subset $J$ of the set of indices of the simple roots we write $\fp_J$, always omitting the braces, so that $\fp_{ij}:=\fp_{\{i,j\}}$ and $\fp_{123}:=\fp_{\{1,2,3\}}$. The precise definition of $\fp_J$ in the situation we need is given in Definition~\ref{def:parabolicSubalgebras}. As explained in Remark~\ref{rem:ParabolicDegeneration}, the naive definition used in the semisimple case is not appropriate here.
	
	\subsection{The Lie superalgebra \texorpdfstring{$\Ddeg$}{Ddeg}}\label{ssec:Ddeg}
	Consider the Lie superalgebra $\fg=\fg_{\overline 0}\oplus \fg_{\overline 1}=\Dalpha$ of dimension $(9|8)$. Its even part is isomorphic to $\fsl_2 \oplus \fsl_2 \oplus \fsl_2$.
	The weight lattice of this algebra is isomorphic to $\mathbb{Z}^3$. We denote by $\varepsilon_1, \varepsilon_2,\varepsilon_3$ the natural basis of this lattice. The positive even roots of this algebra are $2 \varepsilon_i$, $i=1,2,3$.
	
	The odd part $\fg_{\overline 1}$ is isomorphic to the tensor product of three fundamental representations, $V_{\varepsilon_1}\otimes V_{\varepsilon_2}\otimes V_{\varepsilon_3}$.
	
	For generic values of the parameter $\alpha$ the Lie superalgebra $\Dalpha$ is simple. In this paper we deal with a flat degeneration of this algebra, which we denote by $\Ddeg$. To define it, introduce a parameter $\tau$ and consider the flat family of multiplications on the underlying superspace
	\begin{equation*}
		[X,Y]_\tau:=\begin{cases}
			[X,Y], &\text{ if }X \in  \Dalpha_{\overline 0}\text{ or }Y \in  \Dalpha_{\overline 0},\\
			\tau [X,Y], &\text{ if }X,Y \in  \Dalpha_{\overline 1}.
		\end{cases}
	\end{equation*}
	One checks directly that $[\cdot,\cdot]_\tau$ is a Lie superbracket, i.e.\ that it satisfies the Jacobi identity and super-antisymmetry.
	For $\tau \neq 0$ this bracket is isomorphic to the original one, so it defines a simple Lie superalgebra. The case of interest for us is the special point $\tau=0$.
	\begin{definition}\label{def:degeneratedLieSuperalgebra}
		$\Ddeg$ is the Lie superalgebra which is isomorphic to $\Dalpha$ as a graded vector space and is equipped with the super Lie bracket $[\cdot,\cdot]_0$.
	\end{definition}
	\begin{remark}
		Note that the degenerate Lie superalgebra $\Ddeg$ is in fact isomorphic to $D(2|1,0,0,0)$.
	\end{remark}
	
	This Lie superalgebra carries a natural $\mathbb{Z}$-grading in which $\fg_{\overline 0}$ sits in degree zero and $\fg_{\overline 1}$ sits in degree $1$. Note that the weights of the basis elements of $\fg_{\overline 1}$
	are $\pm \varepsilon_1 \pm \varepsilon_2 \pm \varepsilon_3$. Introduce the notation
	\[\alpha_1=-\varepsilon_1+\varepsilon_2 +\varepsilon_3,\]
	\[\alpha_2=\varepsilon_1-\varepsilon_2 +\varepsilon_3,\]
	\[\alpha_3=\varepsilon_1+\varepsilon_2 -\varepsilon_3.\]
	We choose these roots to be the simple positive roots. In particular all the simple roots of $\Ddeg$ are odd. This is the system of simple roots which is fixed once and for all throughout the paper. We also write $\theta:=\varepsilon_1+\varepsilon_2+\varepsilon_3=\alpha_1+\alpha_2+\alpha_3$ for the highest root. The whole root system is shown on Figure~\ref{fig:RootSystem}.
	\begin{figure}[ht]
		\centering
		\begin{tikzpicture}[rootfig]
			% ---------------- title -------------------------------------------
			% ------- axes: solid outside the cube, dashed inside -------------
			% the arrow tips are put on the outer ends only, so each half axis
			% is drawn from the face of the cube outwards
			\draw[axistip] (-1,0,0) -- (-2.6,0,0);
			\draw[axistip] ( 1,0,0) -- ( 2.6,0,0);
			\draw[axistip] (0,-1,0) -- (0,-3.5,0);
			\draw[axistip] (0, 1,0) -- (0, 3.5,0);
			\draw[axistip] (0,0,-1) -- (0,0,-2.6);
			\draw[axistip] (0,0, 1) -- (0,0, 2.6);
			% the parts running through the interior of the cube
			\draw[axishidden] (-1,0,0) -- (1,0,0)
			(0,-1,0) -- (0,1,0)
			(0,0,-1) -- (0,0,1);
			
			\fill[black!70] (0,0,0) circle (0.8pt);
			
			% ------- the cube ------------------------------------------------
			\draw[hidden]
			(-1,1,-1) -- ( 1,1,-1)
			(-1,1,-1) -- (-1,-1,-1)
			(-1,1,-1) -- (-1, 1, 1);
			\draw[solid edge]
			(-1,-1,-1) -- (1,-1,-1) -- (1,-1,1) -- (-1,-1,1) -- cycle
			(-1,-1, 1) -- (-1,1,1) -- (1,1,1) -- (1,-1,1)
			( 1,-1,-1) -- ( 1,1,-1) -- (1,1,1);
			
			% ------- even roots ----------------------------------------------
			\foreach \p in {(2,0,0),(-2,0,0),(0,2,0),(0,-2,0),(0,0,2),(0,0,-2)}
			\filldraw[fill=evenroot, draw=black!60, line width=0.5pt] \p circle (2.2pt);
			
			\node[eventext, slab]  at ( 2,0,0) {$2\varepsilon_1$};
			\node[eventext, nlab]  at (-2,0,0) {$-2\varepsilon_1$};
			\node[eventext, selab, xshift=2pt, yshift=2pt]   at (0, 2,0) {$2\varepsilon_2$};
			\node[eventext, nwlab, xshift=-8pt, yshift=-2pt] at (0,-2,0) {$-2\varepsilon_2$};
			\node[eventext, wlab]  at (0,0, 2) {$2\varepsilon_3$};
			\node[eventext, wlab]  at (0,0,-2) {$-2\varepsilon_3$};
			
			% ------- odd roots -----------------------------------------------
			\coordinate (ppp) at ( 1, 1, 1);
			\coordinate (ppm) at ( 1, 1,-1);
			\coordinate (pmp) at ( 1,-1, 1);
			\coordinate (pmm) at ( 1,-1,-1);
			\coordinate (mpp) at (-1, 1, 1);
			\coordinate (mpm) at (-1, 1,-1);
			\coordinate (mmp) at (-1,-1, 1);
			\coordinate (mmm) at (-1,-1,-1);
			
			\foreach \p in {mpp,pmp,ppm}
			\filldraw[fill=oddroot, draw=black!60, line width=0.4pt] (\p) circle (2.2pt);
			\foreach \p in {ppp,pmm,mpm,mmp,mmm}
			\filldraw[fill=oddblue, draw=black!60, line width=0.4pt] (\p) circle (2.2pt);
			
			\node[oddbluetext, wlab]  at (ppp) {$\theta$};
			\node[oddtext, wlab]      at (ppm) {$\alpha_3$};
			\node[oddtext, nwlab]     at (pmp) {$\alpha_2$};
			\node[oddbluetext, wlab]  at (pmm) {$-\alpha_1$};
			\node[oddtext, slab]      at (mpp) {$\alpha_1$};
			\node[oddbluetext, elab]  at (mpm) {$-\alpha_2$};
			\node[oddbluetext, elab]  at (mmp) {$-\alpha_3$};
			\node[oddbluetext, nelab, xshift=-6pt] at (mmm) {$-\theta$};
			
			% ------- legend ---------------------------------------------------
			\node[anchor=west, align=left, font=\small]
			at (canvas cs:x=3.6cm,y=1.1cm)
			{{\color{oddtext}\textbf{odd simple roots}}\\[1pt]
				{\color{oddtext}$\alpha_1,\alpha_2,\alpha_3$}\\[5pt]
				{\color{oddbluetext}\textbf{remaining odd roots}}\\[1pt]
				{\color{oddbluetext}$\pm\theta,\ -\alpha_1,\ -\alpha_2,\ -\alpha_3$}\\[5pt]
				{\color{eventext}\textbf{even roots}}\\[1pt]
				{\color{eventext}$\pm2\varepsilon_1,\ \pm2\varepsilon_2,\ \pm2\varepsilon_3$}\\[5pt]
				$\theta=\varepsilon_1+\varepsilon_2+\varepsilon_3$\\
				$\phantom{\theta}=\alpha_1+\alpha_2+\alpha_3$\\[2pt]
				$\alpha_0=\delta-\theta$};
		\end{tikzpicture}
		\caption{The root system of $\Ddeg$. The eight odd roots are the vertices of a cube and the six even roots $\pm2\varepsilon_i$ lie on the coordinate axes. The three odd simple roots are marked in yellow. The vertex $-\theta$ is the finite part of the simple affine root $\alpha_0=\delta-\theta$ of $\Daff$.}
		\label{fig:RootSystem}
	\end{figure}
	Then $2 \varepsilon_1 =\alpha_2+\alpha_3$, and similarly for the other two.
	
	We fix once and for all the following notation for the formal exponents of the weights. For $\gamma \in \fh^*$ we write $X^{\gamma}$ and set
	\[X_i:=X^{\varepsilon_i},\quad i=1,2,3,\qquad q:=X^{\delta},\]
	where $\delta$ is the imaginary root of the affine algebra introduced in Subsection~\ref{ssec:Affinization}. Thus
	\[X^{\alpha_1}=X_1^{-1}X_2X_3,\quad X^{\alpha_2}=X_1X_2^{-1}X_3,\quad X^{\alpha_3}=X_1X_2X_3^{-1},\]
	and for $\{i,j,k\}=\{1,2,3\}$ one has
	\begin{equation*}\label{eq:XviaAlpha}
		X_{ij}=X_{k}=X^{(\alpha_i+\alpha_j)/2}.
	\end{equation*}
	The variable $t$ plays a different role: it does not come from the weight lattice, but records the $\mathbb{Z}$-grading of $\Ddeg$ described above, i.e.\ the odd degree. In particular $X^{\gamma}$ never contains $t$, and the factors $t$ appearing in the characters below are written separately.
	
	For the Lie algebra $\fsl_2$ we use the standard basis $\{e,h,f\}$.
	Consider the basis of the irreducible representation $V_\varepsilon=\spn\{v_+,v_-\}$ such that $v_+$ is the highest weight vector, $v_-$ is the lowest weight vector, $e v_-=v_+$ and $f v_+ =v_-$. We add lower indices to the elements of our Lie superalgebra, i.e.\ the basis of $\Ddeg_{\overline 0}$ is $\{e_i, f_i, h_i\}$, $i=1,2,3$.
	We choose the even root vectors as
	\[e_{2 \varepsilon_i}:=e_{i},~ e_{-2 \varepsilon_i}:=f_{i},\quad i=1,2,3,\]
	and the odd root vectors as
	\[e_{\alpha_1}:=v_- \otimes v_+ \otimes v_+, ~e_{\alpha_2}:=v_+ \otimes v_- \otimes v_+, ~e_{\alpha_3}:=v_+ \otimes v_+ \otimes v_-.\]
	
	We denote by $\fb$ the Borel subalgebra spanned by the Cartan subalgebra and the positive root vectors, i.e.
	\[\fb =\fh \oplus \spn \langle e_{\alpha_1}, e_{\alpha_2}, e_{\alpha_3},\ 
	e_{2\varepsilon_1}, e_{2\varepsilon_2}, e_{2\varepsilon_3},\ e_{\theta}\rangle,\]
	
	Following the convention of Definition~\ref{def:WeylGroupConvention}, the Weyl group of $\Ddeg$ is the Weyl group of $\fsl_2\oplus\fsl_2\oplus \fsl_2$, i.e.\ $W \simeq (\mathbb{Z}/2\mathbb{Z})^3$ generated by the reflections $s_1,s_2,s_3$ with $s_i(\varepsilon_i)=-\varepsilon_i$.
	
	\subsection{Affinization}\label{ssec:Affinization}
	The main character of this paper is the affinization of the Lie superalgebra $\Ddeg$.
	
	Let $z$ be a variable and let $\Bbbk[z,z^{-1}]$ be the algebra of Laurent polynomials in this variable. We first consider the loop algebra
	\begin{equation*}
		\Ddeg[z,z^{-1}]:=\Ddeg \otimes \Bbbk[z,z^{-1}].
	\end{equation*}
	
	The odd part of this superalgebra is an abelian radical and its even part is isomorphic to $(\fsl_2\oplus \fsl_2 \oplus \fsl_2)\otimes \Bbbk[z,z^{-1}]$. We denote by $\kappa(\cdot,\cdot)$ the Killing form of the semisimple Lie algebra $\fsl_2\oplus \fsl_2 \oplus \fsl_2$. Recall that it is non-degenerate. We extend this bilinear form to the whole superalgebra $\Ddeg$ by zero on the odd part, i.e.
	\begin{equation*}
		\kappa(x,y)=
		\begin{cases}
			\tr ({\rm ad}\, x\, {\rm ad}\, y),&~\text{if } x,y \in \Ddeg_{\overline 0},\\
			0,&~\text{otherwise.}
		\end{cases}
	\end{equation*}
	
	Finally, we define the affine algebra $\Daff$ as follows:
	\[\Daff := \Ddeg\otimes \Bbbk[z,z^{-1}] \oplus \Bbbk K\]
	as a vector space, where $K$ is a central even element, and
	\[[x \otimes z^a, y \otimes z^b]=[x,y]\otimes z^{a+b}+\delta_{a+b,0}\,a\,\kappa(x,y)K.\]
	Let $\Phi^{af}$ be the root system of this affine algebra. It is a disjoint union of an even and an odd part:
	\[\Phi^{af}=\Phi^{af}_{\overline 0} \sqcup \Phi^{af}_{\overline 1}.\]
	
	We have:
	\[\Phi^{af}_{\overline 0}=\{\pm 2 \varepsilon_i+ r \delta,~ r \delta \mid i=1,2,3,~ r \in \mathbb{Z}\},\]
	\[\Phi^{af}_{\overline 1}=\{\pm \varepsilon_1 \pm \varepsilon_2 \pm \varepsilon_3+ r \delta \mid r \in \mathbb{Z}\}.\]
	
	All the real roots of $\Phi^{af}$ have multiplicity one. The imaginary roots $r\delta$, $r \neq 0$, have multiplicity three: the corresponding root space is spanned by $h_1z^r, h_2z^r,h_3z^r$. This multiplicity is the source of the factor $(q;q)^{-3}$ in the character of the enveloping algebra of the radical computed in Subsection~\ref{ssec:ProjectiveModules}.
	% \Daniil{Check this.}
	% \Eugene{Strongly agree.}
	
	We define the Iwahori (affine Borel) subalgebra $\mathcal{I} \subset \Daff$ by
	\[\mathcal{I}:=\Ddeg \otimes z\Bbbk[z]\oplus\fb\oplus \Bbbk K.\]
	
	We denote by $\Phi^{af}_+$ the positive part of the root system $\Phi^{af}$ with respect to $\mathcal{I}$. That is, $\gamma \in \Phi^{af}_+$ if and only if $e_{\gamma} \in \mathcal{I}$.
	
	Define the simple affine root
	\[\alpha_0=-\varepsilon_1 - \varepsilon_2 -\varepsilon_3 + \delta\]
	and the corresponding root vector
	\[e_{\alpha_0}=v_-\otimes v_- \otimes v_- \otimes z.\]
	Note that $\varepsilon_1 + \varepsilon_2 +\varepsilon_3$ is the highest root of the finite root system. Therefore the following lemma holds.
	
	\begin{lemma}
		\[\Phi^{af}_+=\left(\bigoplus_{i=0}^3 \mathbb{Z}_{\geq 0}\alpha_i\right)\cap\Phi^{af}.\]
	\end{lemma}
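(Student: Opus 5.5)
We prove the equality by showing both inclusions. Both sides are described explicitly: the left side is the set of roots $\gamma$ with $e_\gamma\in\mathcal{I}$, and the right side is a subset of the explicit lists $\Phi^{af}_{\overline 0}$ and $\Phi^{af}_{\overline 1}$. Throughout we use the relations $2\varepsilon_1=\alpha_2+\alpha_3$ (and its cyclic analogues), $\theta=\alpha_1+\alpha_2+\alpha_3$ and $\delta=\alpha_0+\theta=\alpha_0+\alpha_1+\alpha_2+\alpha_3$.

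\emph{First, the roots of $\mathcal{I}$.} Since $\mathcal{I}=\Ddeg\otimes z\Bbbk[z]\oplus\fb\oplus\Bbbk K$, a root $\gamma=\bar\gamma+r\delta$ lies in $\Phi^{af}_+$ exactly when either $r\ge 1$ and $\bar\gamma\in\Phi\cup\{0\}$, or $r=0$ and $\bar\gamma\in\{\alpha_1,\alpha_2,\alpha_3,\theta,2\varepsilon_1,2\varepsilon_2,2\varepsilon_3\}$.

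\emph{Inclusion $\subset$.} Take $\gamma\in\Phi^{af}_+$.
\begin{itemize}
\item If $r=0$, the finite positive roots are nonnegative combinations of $\alpha_1,\alpha_2,\alpha_3$ by the identities above.
\item If $r\ge1$, write $\gamma=\bar\gamma+\theta+(r-1)\delta+\alpha_0$. Then $\gamma$ is $\alpha_0$, plus $(r-1)\delta$, plus $\bar\gamma+\theta$. So it suffices that $\bar\gamma+\theta\in\bigoplus_{i=1}^3\mathbb{Z}_{\ge0}\alpha_i$ for every $\bar\gamma\in\Phi\cup\{0\}$.
\end{itemize}
This is a finite check of the fourteen roots and $0$. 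The only nontrivial cases are the most negative ones:
\begin{itemize}
\item $-\theta+\theta=0$;
\item $-2\varepsilon_1+\theta=-\varepsilon_1+\varepsilon_2+\varepsilon_3=\alpha_1$, and similarly $-2\varepsilon_i+\theta=\alpha_i$;
\item $-\alpha_i+\theta=\alpha_j+\alpha_k$.
\end{itemize}
All other cases add positive roots to $\theta$, so they are clearly nonnegative.

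\emph{Inclusion $\supset$.} Let $\gamma=\sum c_i\alpha_i\in\Phi^{af}$ with all $c_i\ge0$. Since $\alpha_1,\alpha_2,\alpha_3$ have no $\delta$-component and $\alpha_0$ has coefficient $1$ on $\delta$, the $\delta$-coefficient is $r=c_0\ge0$.
\begin{itemize}
\item If $r\ge1$, then $\gamma\in\Phi^{af}_+$ automatically, because every root $\bar\gamma+r\delta$ with $r\ge1$ has its root vector in $\Ddeg\otimes z\Bbbk[z]$.
\item If $r=0$, then $\gamma=\sum_{i\ge1}c_i\alpha_i$ is a finite root (or $0$, which is excluded since it is not a root) that is a nonnegative combination of $\alpha_1,\alpha_2,\alpha_3$. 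We must show it is one of the seven positive finite roots. The $\alpha_i$ are linearly independent, and the expansions of all fourteen finite roots in this basis are explicit: the negatives of the listed positive roots have all coefficients $\le0$, with at least one strictly negative. Hence no negative finite root has all coefficients $\ge0$.
\end{itemize}

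The only step needing care is this last linear-algebra check together with the coefficient bookkeeping in the $\subset$ case. Both are finite verifications on the cube-plus-axes picture of Figure~\ref{fig:RootSystem}, so I expect no real obstacle: the lemma is just the statement that $\theta$ dominates every finite root in the $\alpha$-basis.
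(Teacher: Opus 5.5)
Your proof is correct and is essentially the paper's argument. The paper only remarks that $\theta=\varepsilon_1+\varepsilon_2+\varepsilon_3$ is the highest finite root. Your two ingredients are exactly the unpacking of that remark: the check that $\bar\gamma+\theta\in\bigoplus_{i=1}^3\mathbb{Z}_{\geq 0}\alpha_i$ for all $\bar\gamma\in\Phi\cup\{0\}$, and the observation that the $\delta$-coefficient of $\sum_i c_i\alpha_i$ is $c_0$.
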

	
	For any subset $J \subset \{0,1,2,3\}$ we define a parabolic subalgebra as follows. First we define the corresponding subset of the root system:
	
	\begin{definition}\label{def:parabolicRoots}
		\[\Phi^{af}_{J}:=\Phi^{af}_+\cup\left(\left(\bigoplus_{i\in J} \mathbb{Z}_{\geq 0}(-\alpha_i)\right)\cap\Phi^{af}\right).\]
	\end{definition}
	
	Then
	
	\begin{definition}\label{def:parabolicSubalgebras}
		\[\fp_J:=\fh \oplus \Bbbk K \oplus \bigoplus_{\gamma \in \Phi^{af}_{J}}\Bbbk e_{\gamma}.\]
	\end{definition}
	
	By definition we have 
	\begin{equation*}
		\fp_{\emptyset} =\mathcal{I},~\fp_{123}=\Ddeg \otimes \Bbbk[z] \oplus \Bbbk K,~ \fp_{J_1} \subset \fp_{J_2}~\text{if and only if } J_1 \subset J_2.
	\end{equation*}
	
	\begin{remark}\label{rem:ParabolicDegeneration}
		In the semisimple case a parabolic subalgebra can be defined as the subalgebra generated by the Cartan subalgebra, all positive root vectors and a chosen set of simple negative root vectors. This does not work in the degenerate case we are dealing with. Our definition, however, ensures that our parabolic subalgebras are the degenerations of the corresponding parabolic subalgebras of the semisimple Lie superalgebra $D^{(1)}(2|1,a_1,a_2,a_3)$ for generic values of the parameters.
	\end{remark}
	
	\subsection{Diagrammatic automorphisms}\label{ssec:DiagrammaticAutomorphisms}
	The goal of this subsection is to construct a family of automorphisms of the affine Lie superalgebra $\Daff$ preserving its Iwahori subalgebra $\mathcal{I}$. We call such automorphisms {\it diagrammatic} to emphasize that they are analogues of the automorphisms of semisimple Lie (super)algebras coming from automorphisms of Dynkin diagrams.
	
	The diagrammatic automorphisms we construct are indexed by the symmetric group $\mathfrak S_4$, acting naturally on the set $\{0,1,2,3\}$ of indices of the simple roots. We define the corresponding action on the root system first. For $\sigma \in \mathfrak S_4$ we set
	\[\pi_\sigma(\alpha_i):=\alpha_{\sigma(i)},\]
	and extend this to an arbitrary $\gamma=\sum_{i=0}^3a_i\alpha_i \in \Phi^{af}$ by
	\begin{equation*}\label{eq:AutomorphismOnRoot}
		\pi_\sigma(\gamma):=\sum_{i=0}^3a_i\alpha_{\sigma(i)}.
	\end{equation*}
	
	\begin{lemma}
		$\pi_\sigma(\gamma)\in \Phi^{af}$. If $\gamma \in \Phi^{af}_+$ then $\pi_\sigma(\gamma)\in \Phi^{af}_+$.
	\end{lemma}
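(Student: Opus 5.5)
The plan is to compute everything in the coordinates given by the simple roots. First we express the $\varepsilon_i$ and $\delta$ through $\alpha_0,\dots,\alpha_3$. We have $\delta=\alpha_0+\theta=\alpha_0+\alpha_1+\alpha_2+\alpha_3$ and $2\varepsilon_1=\alpha_2+\alpha_3$, $2\varepsilon_2=\alpha_1+\alpha_3$, $2\varepsilon_3=\alpha_1+\alpha_2$. With these we list $\Phi^{af}$ in the form $\gamma=\sum a_i\alpha_i$, using the description of $\Phi^{af}_{\overline 0}$ and $\Phi^{af}_{\overline 1}$ above. There are three families.

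\begin{enumerate}
\item The imaginary roots $r\delta$ have coefficient vector $(r,r,r,r)$.
\item The even real roots $\pm2\varepsilon_k+r\delta$, with $\{i,j,k\}=\{1,2,3\}$, have coefficient vector equal to $r(1,1,1,1)\pm(e_i+e_j)$. Here $e_i$ is the unit vector, and the indices $i,j$ range over all pairs in $\{1,2,3\}$.
\item The odd roots $\pm\varepsilon_1\pm\varepsilon_2\pm\varepsilon_3+r\delta$ split into two groups. The roots $\pm\alpha_i+r\delta$ for $i\in\{1,2,3\}$ have coefficient vector $r(1,1,1,1)\pm e_i$. The roots $\pm\theta+r\delta$ can be rewritten as $\theta+r\delta=(r+1)\delta-\alpha_0$ and $-\theta+r\delta=(r-1)\delta+\alpha_0$.
\end{enumerate}

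Hence $\Phi^{af}$ is exactly the set of vectors $r(1,1,1,1)+v$, where $r\in\mathbb{Z}$ and $v$ lies in
\[\{0\}\cup\{\pm e_i\mid 0\le i\le 3\}\cup\{\pm(e_i+e_j)\mid 1\le i<j\le 3\}.\]
The real root $r(1,1,1,1)$ is excluded when $v=0$ and $r=0$. The last set is not yet symmetric in all four indices, so we symmetrize it with the identity $(1,1,1,1)-(e_i+e_j)=e_k+e_0$. This rewrites $-(e_i+e_j)+r\delta$ as $(e_k+e_0)+(r-1)\delta$, and likewise the other sign. The resulting description is
\[\Phi^{af}=\bigl\{r\textstyle\sum_i\alpha_i+v\bigr\},\qquad v\in\{0\}\cup\{\pm\alpha_i\}_{0\le i\le3}\cup\{\alpha_i+\alpha_j\}_{0\le i<j\le3},\]
with $v=0$ allowed only for $r\neq0$. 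For the last family it suffices to take the plus sign, because $-(\alpha_i+\alpha_j)=\alpha_k+\alpha_l-\delta$ where $\{i,j,k,l\}=\{0,1,2,3\}$. This set is visibly invariant under permuting the four coordinates, since $\delta=\sum_i\alpha_i$ is $\mathfrak S_4$-invariant. That proves the first assertion.

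For the second assertion we use the Lemma above, $\Phi^{af}_+=(\bigoplus\mathbb{Z}_{\ge0}\alpha_i)\cap\Phi^{af}$. The map $\pi_\sigma$ permutes the coefficients, so it preserves non-negativity of all coefficients. Combined with $\pi_\sigma(\Phi^{af})=\Phi^{af}$, this gives $\pi_\sigma(\Phi^{af}_+)\subset\Phi^{af}_+$.

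The main obstacle is the bookkeeping in the first step: checking that the even roots $\pm2\varepsilon_k+r\delta$ and the odd roots $\pm\theta+r\delta$ fit into the single symmetric description, with no root missing and none extra. I will verify this by matching the three families against the list of $\Phi^{af}$ given above. I will also check multiplicities only as far as needed, since the statement concerns the set of roots only.
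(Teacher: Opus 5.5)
Your proof is correct and is essentially the paper's argument. Both use $\pi_\sigma(\delta)=\delta$, with $\delta=\alpha_0+\alpha_1+\alpha_2+\alpha_3$, to work modulo $\delta$ and check an explicit list of roots in simple-root coordinates, and both get positivity directly from $\Phi^{af}_+=(\bigoplus_i\mathbb{Z}_{\ge0}\alpha_i)\cap\Phi^{af}$; your manifestly $\mathfrak S_4$-symmetric description $\{r\delta+v\}$ just makes explicit the checks the paper leaves implicit, such as $\alpha_0+\alpha_k=\delta-2\varepsilon_k\in\Phi^{af}$ and the imaginary roots $r\delta$.
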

	\begin{proof}
		Note that $\delta=\alpha_0+\alpha_1+\alpha_2+\alpha_3$, so $\pi_\sigma(\delta)=\delta$. One can rewrite
		\[\gamma=\sum_{i=1}^3(a_i-a_0)\alpha_i+a_0 \delta,\]
		and the condition $\gamma \in \Phi^{af}$ is equivalent to $\gamma^{re}:=\sum_{i=1}^3(a_i-a_0)\alpha_i \in \Phi.$
		Hence it suffices to check that $\pi_\sigma(\gamma^{re}) \in \Phi^{af}$. This follows from the description $\Phi=\{\pm \alpha_i, \pm (\alpha_0-\delta),\pm (\alpha_i +\alpha_j)\mid i,j=1,2,3\}$, which gives $\pi_\sigma(\Phi) \subset \Phi^{af}$.
		
		The second claim of the lemma is now immediate from the definition.
	\end{proof}
	
	\begin{proposition}
		The action of $\mathfrak{S}_4$ on $\Phi^{af}$ extends naturally to an action on the Lie superalgebra $\Daff$ by automorphisms. This extension satisfies
		$\pi_\sigma(e_{\alpha_i})=e_{\alpha_{\sigma(i)}}$, $i=0,1,2,3$.
	\end{proposition}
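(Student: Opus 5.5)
The plan is to construct $\pi_\sigma$ explicitly for a generating set of $\mathfrak S_4$, namely $\mathfrak S_3=\mathrm{Stab}(0)$ together with the transposition $(0,3)$. Then I would check the Coxeter relations. The construction rests on one structural simplification of the degeneration: in $\Daff$ the odd part is an abelian ideal, since both $[\cdot,\cdot]_0$ and $\kappa$ vanish on $\fg_{\overline 1}\otimes\fg_{\overline 1}$. It follows that a pair $(\psi,\phi)$ defines an automorphism of $\Daff$ exactly when two conditions hold:
\begin{itemize}
\item $\psi$ is an automorphism of the even part $(\fsl_2^{\oplus 3})\otimes\Bbbk[z,z^{-1}]\oplus\Bbbk K$;
\item $\phi$ is a linear automorphism of the odd part $L:=V_{\varepsilon_1}\otimes V_{\varepsilon_2}\otimes V_{\varepsilon_3}\otimes\Bbbk[z,z^{-1}]$ satisfying $\phi(x\cdot v)=\psi(x)\cdot\phi(v)$, i.e. $\phi$ is an isomorphism $L\to\psi^*L$ of modules over the even part.
\end{itemize}
So no super-Jacobi identity among odd elements needs to be checked.

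\textbf{The subgroup $\mathfrak S_3$.} A permutation $\sigma$ of $\{1,2,3\}$ acts on $\varepsilon_1,\varepsilon_2,\varepsilon_3$ compatibly with $\alpha_i\mapsto\alpha_{\sigma(i)}$, because $\alpha_i=\theta-2\varepsilon_i$ and $\theta$ is symmetric. I would take:
\begin{itemize}
\item $\psi$ permuting the three copies of $\fsl_2$, fixing $z$ and $K$;
\item $\phi$ permuting the tensor factors of $V^{\otimes 3}$.
\end{itemize}
There are no signs, since the tensor factors are even vector spaces. This sends $e_{\alpha_i}\mapsto e_{\alpha_{\sigma(i)}}$ for $i=1,2,3$, fixes $e_{\alpha_0}=v_-\otimes v_-\otimes v_-\otimes z$, and is visibly a group action.

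\textbf{The transposition $(0,3)$.} This is the essential step. A computation with $2\varepsilon_i=\alpha_j+\alpha_k$ shows that the root map sends
\[
2\varepsilon_1\mapsto\delta-2\varepsilon_2,\qquad 2\varepsilon_2\mapsto\delta-2\varepsilon_1,\qquad 2\varepsilon_3\mapsto 2\varepsilon_3 .
\]
Accordingly I would define $\psi$ on the first two $\fsl_2$ copies by
\[
e_1z^n\mapsto f_2z^{n+1},\qquad f_1z^n\mapsto e_2z^{n-1},\qquad h_1z^n\mapsto -h_2z^n+\delta_{n,0}cK,
\]
and symmetrically with the indices $1,2$ exchanged. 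The map $\psi$ is the identity on the third copy and on $K$. The constant $c$ is fixed by the Killing-form normalization through $[e_1z,f_1z^{-1}]$. This is the familiar "Chevalley involution composed with a translation" automorphism of $\widehat{\fsl}_2$, combined with the swap of copies $1$ and $2$.

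On the odd part I would set
\[
\phi\bigl(v_{s_1}\otimes v_{s_2}\otimes v_{s_3}\otimes z^n\bigr)=\epsilon(s_1,s_2)\,v_{-s_2}\otimes v_{-s_1}\otimes v_{s_3}\otimes z^{\,n+(s_1+s_2)/2},
\]
where $\epsilon(s_1,s_2)$ are signs to be chosen. The exponent shift is an integer, and weights transform as the root map dictates. The signs must be chosen so that $\phi$ intertwines $e_1,f_1,e_2,f_2$ with their images. The constraint $e_{\alpha_3}\mapsto e_{\alpha_0}$ forces $\epsilon(+,+)=1$, and in the other direction $e_{\alpha_0}\mapsto e_{\alpha_3}$. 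The intertwining identity then has to be checked on the basis vectors of $L$, with each $\fsl_2$ acting through $e v_-=v_+$, $fv_+=v_-$. Since only two tensor factors are involved, this is a finite check.

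\textbf{The Coxeter relations.} Finally I would verify the Coxeter relations, which I expect to be the main obstacle, along with the sign choice above. Each relation word $w$ in the generators gives an automorphism $\pi_w$ with three properties:
\begin{itemize}
\item it acts trivially on roots;
\item it fixes $K$;
\item it fixes every $e_{\alpha_i}$.
\end{itemize}
Such an automorphism acts by a scalar on each real root space. The scalars on the $\fsl_2$ parts are determined by those on $e_{\pm 2\varepsilon_i}z^n$, and they are multiplicative along brackets of even elements with odd ones. The odd root spaces are all reached from the $e_{\alpha_i}$ by the action of the even loop algebra, so these scalars are forced to be $1$, provided $\pi_w$ is also trivial on the even root vectors. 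That last point I would check directly on $e_iz^{\pm1}$.

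Hence the remaining work is to confirm that the order-two and braid relations involving $(0,3)$ and $(2,3)$ hold exactly, and not merely up to a diagonal torus element. If a sign obstruction appears, I would first try to fix it by rescaling $\epsilon(\pm,\mp)$. Failing that, I would replace the transpositions by the $4$-cycle $(0,1,2,3)$ as a generator.
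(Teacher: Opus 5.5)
Your proposal is correct and follows essentially the paper's route: explicit formulas on $\mathrm{Stab}(0)\simeq\mathfrak S_3$ plus one transposition through $0$, followed by a direct check of the brackets, the cocycle correction on $\fh$, and the $\mathfrak S_4$ relations. The paper uses $(0,1)$ and you use $(0,3)$, which is conjugate to it by $(1,3)$. Your reduction via the abelian odd ideal makes these checks cleaner. Moreover, the intertwining conditions force $\epsilon(s_1,s_2)=\epsilon(-s_1,s_2)=\epsilon(s_1,-s_2)$, so all signs equal $1$, and the Coxeter relations (including $((2,3)(0,3))^3=1$) then hold exactly rather than up to a torus element, so your fallback is not needed.
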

	\begin{proof}
		Assume first that $\sigma(0)=0$. Then we set
		\[\pi_\sigma(e_{\pm 2 \varepsilon_i}z^k):=e_{\pm 2 \varepsilon_{\sigma(i)}}z^k,\qquad \pi_\sigma(e_{\pm \alpha_i}z^k):=e_{\pm \alpha_{\sigma(i)}}z^k.\]
		
		This map is an automorphism by construction. Now let $\pi:=\pi_{(0,1)}$ be the transposition of $0$ and $1$. We set
		\[\pi(e_{\pm 2 \varepsilon_1}z^k):=e_{\pm 2 \varepsilon_1}z^k,~\pi(e_{\pm \alpha_i}z^k):=e_{\pm \alpha_i}z^k,~i=2,3,\]
		\[\pi(e_{2 \varepsilon_2}z^k):=e_{-2 \varepsilon_3}z^{k+1},~\pi(e_{2 \varepsilon_3}z^k):=e_{-2 \varepsilon_2}z^{k+1},\]
		\[\pi(e_{-2 \varepsilon_2}z^k):=e_{2 \varepsilon_3}z^{k-1},~\pi(e_{-2 \varepsilon_3}z^k):=e_{2 \varepsilon_2}z^{k-1},\]
		\[\pi(v_- \otimes v_+ \otimes v_+ z^k):=v_- \otimes v_- \otimes v_- z^{k+1},~\pi(v_- \otimes v_- \otimes v_- z^k):=v_- \otimes v_+ \otimes v_+ z^{k-1}.\]
		It remains to define the action on the Cartan subalgebra and on the central element and to check the compatibility with the cocycle. On the elements $h_iz^a$ with $a \neq 0$ we set $\pi_\sigma(h_iz^a):=\overline{\pi}_\sigma(h_i)z^a$, where $\overline{\pi}_\sigma$ is the linear map on $\fh$ transpose-inverse to the map induced by $\pi_\sigma$ on $\fh^*$. Explicitly, for $\pi=\pi_{(0,1)}$ the map on $\fh^*$ is $\varepsilon_1 \mapsto \varepsilon_1$, $\varepsilon_2 \mapsto -\varepsilon_3$, $\varepsilon_3 \mapsto -\varepsilon_2$, and hence
		\[\overline{\pi}(h_1)=h_1,\quad \overline{\pi}(h_2)=-h_3,\quad \overline{\pi}(h_3)=-h_2.\]
		On the finite Cartan subalgebra itself the naive formula $\pi(h):=\overline{\pi}(h)$ is {\it not} compatible with the central extension. Indeed, for $\gamma=2\varepsilon_2$ we have $\pi(e_{\gamma}z^k)=e_{-2\varepsilon_3}z^{k+1}$ and $\pi(e_{-\gamma}z^{-k})=e_{2\varepsilon_3}z^{-k-1}$, so that
		\[\pi\bigl([e_{\gamma}z^k,e_{-\gamma}z^{-k}]\bigr)=[e_{-2\varepsilon_3}z^{k+1},e_{2\varepsilon_3}z^{-k-1}]=-h_3+(k+1)\kappa(e_2,f_2)K,\]
		while $[e_\gamma z^k,e_{-\gamma}z^{-k}]=h_2+k\kappa(e_2,f_2)K$. Therefore we set $\pi_\sigma(K):=K$ and
		\begin{equation*}\label{eq:PiOnCartan}
			\pi_\sigma(h):=\overline{\pi}_\sigma(h)+\lambda_\sigma(h)K,\qquad h \in \fh,
		\end{equation*}
		where $\lambda_\sigma \in \fh^*$ is determined by $\lambda_\sigma(h_{\gamma})=n_{\gamma}\,\kappa(e_\gamma,e_{-\gamma})$. Here $\gamma$ runs over the even roots of the finite root system, $h_\gamma=[e_\gamma,e_{-\gamma}]$ and $n_\gamma \in \mathbb{Z}$ is defined by $\pi_\sigma(\gamma)=\overline{\pi}_\sigma(\gamma)+n_\gamma\delta$. For $\sigma(0)=0$ one has $n_\gamma=0$ and $\lambda_\sigma=0$, whereas for $\pi=\pi_{(0,1)}$ one gets
		\[\pi(h_1)=h_1,\quad \pi(h_2)=-h_3+\kappa(e_2,f_2)K,\quad \pi(h_3)=-h_2+\kappa(e_3,f_3)K.\]
		With these formulas a direct check on the pairs of root vectors shows that $\pi_\sigma$ preserves all the brackets, including the cocycle term. Note that the odd part of $\Ddeg$ is abelian and $\kappa$ vanishes on it, so the odd root vectors impose no condition on $\lambda_\sigma$. Finally, the relations of $\mathfrak{S}_4$ are checked directly.
	\end{proof}
	% \Daniil{Claude added some arguments for Cartan and central element, check it.}
	
	\begin{corollary}\label{cor:ParabolicStable}
		Consider the parabolic subalgebra $\fp_{ij}$ for a two-element subset $\{i,j\}\subset \{0,1,2,3\}$ and denote $\{k,l\}:=\{0,1,2,3\} \backslash \{i,j\}$. Then $\fp_{ij}$ is stable under $\pi_{(i,j)(k,l)}$.
	\end{corollary}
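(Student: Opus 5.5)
The plan is to reduce the statement to a claim about the root set $\Phi^{af}_{ij}$. By Definition~\ref{def:parabolicSubalgebras}, $\fp_{ij}$ is $\fh\oplus\Bbbk K$ plus the root spaces $\Bbbk e_\gamma$ for $\gamma\in\Phi^{af}_{ij}$. The automorphism $\pi_\sigma$ constructed in the preceding Proposition has three relevant properties:
\begin{enumerate}[(a)]
\item it maps $\fh\oplus\Bbbk K$ into itself, since $\pi_\sigma(h)=\overline{\pi}_\sigma(h)+\lambda_\sigma(h)K$ and $\pi_\sigma(K)=K$;
\item it maps each real root space $\Bbbk e_\gamma$ to $\Bbbk e_{\pi_\sigma(\gamma)}$, being compatible with the action on roots;
\item it maps the imaginary root space spanned by $h_1z^r,h_2z^r,h_3z^r$ onto itself.
\end{enumerate}
For (b) I would argue as follows. $\pi_\sigma$ intertwines the adjoint action of $\fh$ with the $\mathfrak S_4$-action on $\fh^*\oplus\Bbbk\delta$ up to the central correction, and $K$ acts by zero in the adjoint representation. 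So the weight of $\pi_\sigma(e_\gamma)$ is $\pi_\sigma(\gamma)$. This can also be checked directly on the generators listed in the proof of the Proposition, for $\sigma(0)=0$ and for $\pi_{(0,1)}$, and then extended to all of $\mathfrak S_4$ by composition.

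It then suffices to show that $\sigma:=(i,j)(k,l)$ satisfies $\pi_\sigma(\Phi^{af}_{ij})=\Phi^{af}_{ij}$. Write $\Phi^{af}_{ij}=\Phi^{af}_+\cup\bigl(\Phi^{af}\cap(\mathbb{Z}_{\ge0}(-\alpha_i)\oplus\mathbb{Z}_{\ge0}(-\alpha_j))\bigr)$. The Lemma preceding the Proposition shows that $\pi_\sigma$ preserves $\Phi^{af}$ and $\Phi^{af}_+$. Since $\sigma$ swaps $i$ and $j$, the map $\pi_\sigma$ sends $a\alpha_i+b\alpha_j$ to $b\alpha_i+a\alpha_j$, so the cone $\mathbb{Z}_{\ge0}(-\alpha_i)\oplus\mathbb{Z}_{\ge0}(-\alpha_j)$ is mapped onto itself. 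Intersecting with the $\pi_\sigma$-stable set $\Phi^{af}$ gives stability of the second piece. As $\pi_\sigma$ is a bijection of finite-type root data (it is an involution here), the inclusion $\pi_\sigma(\Phi^{af}_{ij})\subset\Phi^{af}_{ij}$ is already an equality.

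The step I expect to be the main obstacle is (b): proving that $\pi_\sigma$ really maps root vectors to root vectors of the permuted root. The Proposition's proof writes formulas only for generators, and for a general $\sigma$ one has to compose the $\sigma(0)=0$ automorphisms with $\pi_{(0,1)}$. I would handle this by a weight argument. Let $H\in\fh$. Then $[\pi_\sigma(H),\pi_\sigma(x)]=\pi_\sigma([H,x])$, and the $K$-component of $\pi_\sigma(H)$ acts trivially. Moreover $\pi_\sigma$ restricted to $\fh$ modulo $K$ is $\overline{\pi}_\sigma$, the transpose-inverse of the root action. Together these show that $\pi_\sigma$ maps the $\gamma$-weight space (including the $z$-degree, i.e. the $\delta$-component, read off via the degree derivation) to the $\pi_\sigma(\gamma)$-weight space. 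The $\delta$-part requires care because $\fh$ does not see the $z$-degree. I would check it on the generators $e_{\pm\alpha_i}z^k$, $e_{\pm2\varepsilon_i}z^k$, where it is visible from the explicit formulas (e.g. $\pi_{(0,1)}(e_{2\varepsilon_2}z^k)=e_{-2\varepsilon_3}z^{k+1}$, consistent with $\pi(2\varepsilon_2)=-2\varepsilon_3+\delta$). I would then extend it multiplicatively, since root spaces of sums are spanned by brackets. The only pairs with $i\ne 0\ne j$ reduce to the case $\sigma(0)=0$; otherwise one uses $\pi_{(0,1)}$ conjugated by the $\sigma(0)=0$ automorphisms.
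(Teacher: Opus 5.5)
Your proposal is correct and is essentially the argument the paper leaves implicit. The corollary is stated without proof, right after the Lemma giving $\pi_\sigma(\Phi^{af}_+)\subset\Phi^{af}_+$ and the Proposition extending $\pi_\sigma$ to $\Daff$. The argument is that $\pi_\sigma$ preserves $\fh\oplus\Bbbk K$ and carries each root space $\fg_\gamma$ onto $\fg_{\pi_\sigma(\gamma)}$, while $\sigma=(i,j)(k,l)$ preserves both $\Phi^{af}_+$ and the cone $\mathbb{Z}_{\geq 0}(-\alpha_i)\oplus\mathbb{Z}_{\geq 0}(-\alpha_j)$.

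There is one slip in your last paragraph. A product of two disjoint transpositions in $\mathfrak S_4$ moves every index, so $\sigma(0)\neq 0$ in every case; for $\{i,j\}\subset\{1,2,3\}$ one has $\sigma=(i,j)(k,0)$. Hence $\pi_\sigma$ is never of the type $\sigma(0)=0$ alone: it is always an automorphism fixing $0$ composed with a conjugate of $\pi_{(0,1)}$. This does not affect the proof, because your property (b) is preserved under composition.
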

	
	\section{Categories of representations and Grothendieck groups}\label{sec:Categories}
	In this section we first recall the standard homological notions which we use, referring to \cite{weibel1994introduction} for the details, then the definition of the Grothendieck group and of the maps induced by functors on it. After that we introduce the categories of representations we work with and compute their Grothendieck groups in terms of characters. Finally we introduce the $\Ext$ pairing, which is the main tool of Section~\ref{sec:Orthogonality}.
	
	\subsection{Recollected facts from homological algebra}
	Let $\mathcal{O}$ be an abelian category.
	A \textbf{chain complex} \( M^\bullet \) in the abelian category \( \mathcal{O} \) is a family of objects \( \{M^n\}_{n \in \mathbb{Z}} \) in \( \mathcal{O} \) together with morphisms (differentials)
	\[
	d_M^n: M^n \longrightarrow M^{n+1}
	\]
	such that
	\[
	d_M^{n+1} \circ d_M^n = 0
	\]
	for every \( n \in \mathbb{Z} \).
	
	A \textbf{morphism} of chain complexes \( f: M^\bullet \to N^\bullet \) is a family of morphisms \( f^n: M^n \to N^n \) such that
	\[
	f^{n+1} \circ d_M^n = d_N^n \circ f^n
	\]
	for all \( n \).

	Let \( K(\mathcal{O}) \) be the \textbf{homotopy category}: objects are chain complexes, and morphisms are chain maps modulo chain homotopy.
	
	For any object $M^\bullet$ the $i$-th cohomology group is defined in the following way
	
	\[H^{i}(M)=\ker d_M^{i}/{\rm im} ~d_{M}^{i-1}.\]
	
	A chain map is a \textbf{quasi-isomorphism} if it induces isomorphisms on all homology objects.
	
	The \textbf{derived category} \( D(\mathcal{O}) \) is the localization of \( K(\mathcal{O}) \) with respect to all quasi-isomorphisms:
	\[
	D(\mathcal{O}) := K(\mathcal{O})[\mathcal{Q}^{-1}],
	\]
	where \( \mathcal{Q} \) is the class of quasi-isomorphisms.
	
	The subcategory of the derived category which consists of the elements which have right bounded representatives is denoted by $D^-(\mathcal{O})$. In a similar way $D^+(\mathcal{O})$ consists of classes of left bounded complexes, $D^b(\mathcal{O})$ of the bounded complexes.
	
	Let \(\psi: \mathcal{O} \to \mathcal{A}\) be a \textbf{right exact} functor between abelian categories, where \(\mathcal{O}\) has enough projectives.

	The \textbf{left derived functor} of \(\psi\) is the functor
	\[
	L\psi: D^-(\mathcal{O}) \longrightarrow D^-(\mathcal{A})
	\]
	defined as follows:
	
	For any complex \(X^\bullet \in D^-(\mathcal{O})\), choose a \textbf{projective resolution} 
	\[
	P^\bullet \xrightarrow{\sim} X^\bullet,
	\]
	where \(P^\bullet\) is a complex of projective objects in \(\mathcal{O}\) quasi-isomorphic to \(X^\bullet\).
	
	Then set
	\[
	L\psi(X^\bullet) := \psi(P^\bullet) \in D^-(\mathcal{A}),
	\]
	where \(\psi(P^\bullet)\) is the complex obtained by applying \(\psi\) object-wise to \(P^\bullet\).
	For a morphism in \(D^-(\mathcal{O})\), \(L\psi\) is defined by lifting it to projective resolutions. This is well-defined up to unique isomorphism.
	
	For an object \(X \in \mathcal{O}\) (viewed as a complex concentrated in degree 0), the homology of \(L\psi(X)\) gives the classical left derived functors:
	\[
	H^{-n}(L\psi(X)) \cong L^n\psi(X).
	\]
	Recall the following definition of adjoint functors.
	\begin{definition}\label{def:Adjunction}    
		An adjunction between \(\mathcal{O}\) and \(\mathcal{C}\) is given by functors 
		\(\psi: \mathcal{O} \to \mathcal{C}\) and \(\phi: \mathcal{C} \to \mathcal{O}\) 
		together with a natural bijection between Hom-sets
		\[
		\operatorname{Hom}_{\mathcal{C}}(\psi(M), N) \cong \operatorname{Hom}_{\mathcal{O}}(M, \phi(N))
		\]
		for all objects \(M \in \mathcal{O}\) and \(N \in \mathcal{C}\). 
		The bijection is required to be \textbf{natural} in both variables \(M\) and \(N\).
	\end{definition}
	
	% \Eugene{Here will be written the used facts about abelian categories, derived categories, derived functors, adjoint functors.}
	
	\subsection{Grothendieck groups}\label{ssec:GrothendieckGroups}
	Consider an abelian category $\mathcal{O}$. Recall that the Grothendieck group $K_0(\mathcal{O})$ is an abelian group generated by the symbols $[M]$, $M \in \mathcal{O}$ and for any short exact sequence 
	\begin{equation}\label{eq:ShortExactSequence}
		0 \rightarrow L \rightarrow M \rightarrow N \rightarrow 0
	\end{equation}
	there is the following relation in $K_0(\mathcal{O})$:
	\begin{equation*}
		[M]=[L]+[N].
	\end{equation*}
	In words it means that the class of an object is equal to the sum of its subobject and quotient object. We keep the letter $K$ for the homotopy category of the previous subsection and always write $K_0$ for the Grothendieck group.
	
	Recall also that the natural map $K_0(\mathcal{O}) \rightarrow K_0(D^b(\mathcal{O}))$, $[M]\mapsto [M]$, is an isomorphism, the inverse being given by the Euler characteristic $[M^\bullet]\mapsto \sum_i(-1)^i[H^i(M^\bullet)]$. We use this identification without further mention.
	
	Now let $\eta$ be an exact functor $\mathcal{O} \rightarrow \mathcal{C}$. For any exact sequence \eqref{eq:ShortExactSequence} it gives the short exact sequence
	\[
	0 \rightarrow \eta(L) \rightarrow \eta(M) \rightarrow \eta(N) \rightarrow 0.\]
	Thus 
	\[[\eta(M)]=[\eta(L)]+[\eta(N)].\]
	
	Therefore each exact functor naturally defines the map on the Grothendieck groups $[\eta]:K_0(\mathcal{O})\rightarrow K_0(\mathcal{C})$, $[M] \mapsto [\eta(M)]$.
	
	More generally, assume that $\eta$ is right exact functor of finite homological degree $k$, i.e.\ $\textbf{L}^{(k+1)}\eta=0$. Then for any short exact sequence \eqref{eq:ShortExactSequence} we get
	\[0\rightarrow  \textbf{L}^{(k)}\eta(L) \rightarrow \textbf{L}^{(k)}\eta(M) \rightarrow \textbf{L}^{(k)}\eta(N) \rightarrow  \dots\rightarrow \textbf{L}^{(1)}\eta(L) \rightarrow \textbf{L}^{(1)}\eta(M) \rightarrow \textbf{L}^{(1)}\eta(N)  \rightarrow \eta(L) \rightarrow \eta(M) \rightarrow \eta(N) \rightarrow 0.\]
	
	Then the following relation holds in the Grothendieck group:
	\[\sum_{i=0}^k (-1)^i[\textbf{L}^{(i)}\eta(M)]=\sum_{i=0}^k (-1)^i [\textbf{L}^{(i)}\eta(L)]+\sum_{i=0}^k (-1)^i [\textbf{L}^{(i)}\eta(N)].\]
	Therefore each right exact functor naturally defines the map on the Grothendieck groups
	\begin{equation*}
		[\eta]:K_0(\mathcal{O}) \rightarrow K_0(\mathcal{C}),\qquad [M] \mapsto \sum_{i=0}^k (-1)^i[\textbf{L}^{(i)}\eta(M)].
	\end{equation*}
	
	In the same way the left exact functor acts naturally on the Grothendieck group.
	
	\subsection{Categories of representations}\label{ssec:CategoriesOfRepresentations}
	We now introduce the categories of modules we work with. Let $\fg$ be a Lie superalgebra with a fixed decomposition \eqref{eq:ReductivePart}, let $\fb$ be a Borel subalgebra of $\fg^{red}$ and let $\fh \subset \fb$ be the corresponding Cartan subalgebra.
	
	\begin{definition}\label{def:CategoryO}
		We denote by $\mathcal{O}(\fg)$ the category of $\fg$-modules integrable with respect to $\fg^{red}$ such that the weights of these modules with respect to $\fh$ are bounded from the above. In a similar way we denote by $\mathcal{O}^\vee(\fg)$ the category of $\fg$-modules integrable with respect to $\fg^{red}$ such that the weights of these modules with respect to $\fh$ are bounded from the below.
	\end{definition}
	
	In particular, finite dimensional $\fg$-modules are contained in $\mathcal{O}(\fg) \cap \mathcal{O}^\vee(\fg)$.
	
	Let $R$ be the radical of $\fg$, i.e.\ $\fg \simeq \fg^{red} \ltimes R$. Then $R$ acts nilpotently on each object $M \in \mathcal{O}(\fg)$. Thus it annihilates each irreducible object. Therefore 
	\[K_0(\mathcal{O}(\fg))\simeq K_0(\mathcal{O}(\fg^{red})).\]
	
	Therefore $K_0(\mathcal{O}(\fg))$ is a completion of the free abelian group on the classes of the irreducible modules. The modules occurring in this paper are in general not of finite length, but their weight components in each fixed $z$-degree are, so that their classes are infinite sums of classes of irreducibles with only finitely many terms in each degree in $z$.	
	
	For a finite dimensional $\fg$-module $V$ the tensor product functor $\bullet \otimes V:\mathcal{O}(\fg) \rightarrow \mathcal{O}(\fg)$ is an exact endofunctor. 
	
	All the functors we consider relate a pair of Lie superalgebras $\fg_1 \subset \fg_2$. Throughout the paper such a pair is assumed to satisfy the following assumption.
	
	\begin{assumption}\label{ass}
		Let $\fb_2$ be a Borel subalgebra of $\fg_2^{red}$, let $\fb_2^-$ be the opposite Borel subalgebra and let $\fh \subset \fb_2$ be the corresponding Cartan subalgebra. We assume that 
		\begin{itemize}
			\item $\fb_2 \subset \fg_1$,
			\item $\fg_{1}^{\overline 0}+ \fb_2^-=\fg_{2}^{ \overline 0}$,
            \item $\dim(\fg_2/\fg_1) < \infty$.
		\end{itemize} 
	\end{assumption}
	
	Finally we introduce the graded version of these categories in the case which is of interest for us. Recall that the Lie superalgebras $\fp_J$ have various natural gradings. First of all they have a $\mathbb{Z}$-grading such that the even part of $\fp_J$ is contained in the zero component and the odd part of $\fp_J$ is contained in the odd component. Next this superalgebra is graded with respect to the $z$-degree. Finally it is graded by the eigenvalue of the adjoint action of the Cartan subalgebra.
	
	\begin{definition}\label{def:GradedCategory}
		We denote by $\mathcal{O}^{gr}(\fp_J)$ the category of integrable representations of $\fp_J$ graded with respect to all these gradings.
	\end{definition}
	
	For $M \in \mathcal{O}^{gr}(\fp_J)$ and $l,r \in \mathbb{Z}$ we denote by $M\sh{l}{r}$ the module $M$ with the $z$-grading shifted by $l$ and the parity grading shifted by $r$. The defining property is
	\begin{equation}\label{eq:ShiftDefinition}
		\ch\bigl(M\sh{l}{r}\bigr)=q^l(-t)^r\ch(M),
	\end{equation}
	where $\ch$ is the character defined in Subsection~\ref{ssec:Characters}. The sign is explained in Remark~\ref{rem:SignConvention}.
	The homological shift of a complex is denoted, as always, by $M^\bullet[n]$, with the convention $\bigl(M^\bullet[n]\bigr)^i=M^{i+n}$. In particular a left derived functor takes values in non-positive cohomological degrees and a right derived one in non-negative degrees. Thus the angle brackets always refer to the two internal gradings and the square brackets always refer to the homological grading. In the Grothendieck group
	\[\bigl[M\sh{l}{r}\bigr]=q^l(-t)^r[M],\qquad \bigl[M^\bullet[1]\bigr]=-[M^\bullet].\]
	
	\subsection{Characters}\label{ssec:Characters}
	Let us now compute the Grothendieck groups of the categories $\mathcal{O}^{gr}(\fp_J)$ in terms of characters.
	
	%Consider the Lie superalgebra $\Ddeg\otimes \Bbbk[z] \simeq \fp_{123}$. Then 
	%\[\fp_{123}^{red}=\fsl_1 \oplus \fsl_2 \oplus \fsl_2.\]
	Let $M \in \mathcal{O}^{gr}(\fp_J)$. $M$ is integrable over $\fp_J^{red}$. Therefore the Cartan subalgebra $\fh= h_1 \oplus h_2 \oplus h_3$ acts semisimply on $M$. Thus $M$ has the following decomposition as a vector space:
	\begin{equation*}
		M:=\bigoplus_{\lambda \in \mathbb{Z}^3,k,r\in \mathbb{Z}}M_{\lambda,k,r},
	\end{equation*}
	such that $M_{\lambda,k,r}$ lies in the $k$-th $z$-graded component, the $r$-th component coming from the parity grading and for $m \in M_{\lambda,k,r}$
	\[h_i m:=\lambda_i m.\]
	
	Then the character of the module $M$ is defined to be the graded superdimension of $M$, that is, the variable recording the odd grading is $-t$:
	\begin{equation}\label{eq:CharacterDefinition}
		\ch(M):=\sum_{\lambda \in \mathbb{Z}^3,k,r\in \mathbb{Z}} \dim(M_{\lambda,k,r})X_1^{\lambda_1}X_2^{\lambda_2}X_3^{\lambda_3}q^k(-t)^r.
	\end{equation}
	
	\begin{remark}\label{rem:SignConvention}
		The sign in \eqref{eq:CharacterDefinition} is a normalization of the variable $t$ and nothing more: replacing $t$ by $-t$ turns $\ch$ into the naive graded dimension. We use this normalization because the $\mathbb{Z}$-grading by $r$ refines the parity, the component $M_{\lambda,k,r}$ is even or odd according to the parity of $r$, so that $\ch$ becomes the supercharacter, and because with it the exterior algebra on an odd one dimensional space of weight $\beta$ has character $1-tX^{\beta}$. This is the form in which all the products below appear, see \eqref{eq:FirstInductionCharacter}, the character of $\U(R)$ in Subsection~\ref{ssec:ProjectiveModules} and the numerators \eqref{eq:Cab} of the operators of \cite{ArthamonovShakirov-2019}.
	\end{remark}

	Note that for any short exact sequence \eqref{eq:ShortExactSequence} one has
	\[\ch(M)=\ch(L)+\ch(N).\]
	
	Therefore there is the natural map of the abelian groups:
	\begin{equation*}
		\iota_{J}:K_0(\mathcal{O}^{gr}(\fp_{J})) \rightarrow \mathbb{Z}[X_1^{\pm 1},X_2^{\pm 1},X_3^{\pm 1},t^{\pm 1}]((q^{\frac{1}{2}})).
	\end{equation*}
	
	Consider now the case $J=\{1,2,3\}$.
	Up to the shifts $\sh{l}{r}$ the irreducible modules over this algebra have the form $V_{123}(\mu_1,\mu_2,\mu_3):=V_{\mu_1 \varepsilon_1} \otimes V_{\mu_2 \varepsilon_2} \otimes V_{\mu_3 \varepsilon_3}$, where $V_{\mu_i \varepsilon_i}$ is the irreducible representation of the $i$-th copy of the Lie algebra $\fsl_2$, $\mu_i\geq 0$. The general irreducible object is $V_{123}(\mu)\sh{l}{r}$. Therefore
	\begin{equation}\label{eq:character3Irreducible}
		\ch(V_{123}(\mu_1,\mu_2,\mu_3))= \prod_{i=1}^3 \frac{X_i^{\mu_i}-X_i^{-\mu_i-2}}{1-X_i^{-2}}
	\end{equation} 
	Recall the reflections $s_i, i=1,2,3$, acting in the following way
	\[s_1(X_1^{\mu_1}X_2^{\mu_2}X_3^{\mu_3}q^kt^r)=X_1^{-\mu_1}X_2^{\mu_2}X_3^{\mu_3}q^kt^r.\]
	Then the following lemma is clear.
	\begin{lemma}
		The Grothendieck group of the Lie superalgebra $\fp_{123}$ is isomorphic to
		\[\iota_{123}(K_0(\mathcal{O}^{gr}(\fp_{123})))=\mathbb{Z}[X_1^{\pm 1},X_2^{\pm 1},X_3^{\pm 1},t^{\pm 1}]^{s_1,s_2,s_3}((q^{\frac{1}{2}})).\]
	\end{lemma}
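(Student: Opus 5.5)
The statement to prove is the description of $\iota_{123}(K_0(\mathcal{O}^{gr}(\fp_{123})))$. The plan has three steps: show that $\iota_{123}$ is injective, that its image lies in the $W$-invariants, and that every invariant is reached.

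\textbf{Injectivity.} As observed in Subsection~\ref{ssec:CategoriesOfRepresentations}, the radical $R=\fg_{\overline 1}\otimes\Bbbk[z]\oplus\fg_{\overline 0}\otimes z\Bbbk[z]$ of $\fp_{123}$ acts nilpotently on every object. Hence every simple graded object is one of the modules $V_{123}(\mu)\sh{l}{r}$. Each object $M$ has finite-dimensional graded pieces in each $z$-degree. So, working degree by degree in $z$, its class is a sum $\sum m_{\mu,l,r}[V_{123}(\mu)\sh{l}{r}]$ with only finitely many terms in each $q$-degree. This means $K_0$ is the corresponding completed free abelian group on these symbols. Since $\iota_{123}$ is additive and compatible with the completion, it sends this basis to the elements $q^l(-t)^r\ch V_{123}(\mu)$, with $\ch V_{123}(\mu)$ given by \eqref{eq:character3Irreducible}.

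\textbf{The image consists of invariants.} By the Weyl character formula for $\fsl_2$, each factor in \eqref{eq:character3Irreducible} equals $X_i^{\mu_i}+X_i^{\mu_i-2}+\dots+X_i^{-\mu_i}$. This is invariant under $s_i$ and does not involve the other variables. So every $\ch V_{123}(\mu)$ is $s_1,s_2,s_3$-invariant, and the image lies in the invariant ring with coefficients in $\mathbb{Z}[t^{\pm1}]((q^{1/2}))$.

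\textbf{Surjectivity.} The products $\ch V_{123}(\mu)$ are unitriangular with respect to the monomial symmetric functions $m_\mu=\prod_i (X_i^{\mu_i}+X_i^{-\mu_i})$ (with $m_0$ taken to be $1$ in that factor), where the order is dominance in each coordinate separately. Triangularity holds factorwise: $\ch V_{\mu_i\varepsilon_i}=m_{\mu_i}+\ch V_{(\mu_i-2)\varepsilon_i}$. Therefore the $\ch V_{123}(\mu)$ form a $\mathbb{Z}$-basis of $\mathbb{Z}[X^{\pm1}]^{s_1,s_2,s_3}$. Injectivity then follows, because linear independence of the basis extends to independence over $\mathbb{Z}[t^{\pm1}]((q^{1/2}))$.

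\textbf{Main obstacle.} The delicate point is passing to the completion. An arbitrary element of the target is $\sum_k f_k q^{k/2}$ with $f_k$ invariant and containing finitely many $t$-powers. Expanding each $f_k$ in the basis gives a formal sum of shifted irreducibles that is finite in each $q$-degree and bounded below in $q$. I expect to realize it by a direct sum of the modules $V_{123}(\mu)\sh{l}{r}$, with $R$ acting by zero. Two things need checking here. First, the $q^{1/2}$ exponents: they must be allowed as $z$-shifts, or else absorbed into the definition of the grading. Second, each graded component of such a direct sum must have finite length in each $z$-degree, so that it is an object of $\mathcal{O}^{gr}(\fp_{123})$. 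Negative coefficients are handled by writing the element as a difference of two such sums.
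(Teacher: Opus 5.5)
Your proposal is correct and is essentially the paper's own argument, which the paper leaves as ``clear'': the simple objects are the $V_{123}(\mu)\sh{l}{r}$ because the radical acts by zero on them, their Weyl characters are $s_1,s_2,s_3$-invariant and form a basis of the invariant Laurent polynomials, and $K_0$ is by convention the completed free group on these classes. Of the two points you flag, finiteness in each $z$-degree is automatic, since each $f_k$ is a finite combination of characters; the exponents $q^{1/2}$ require the convention that $z$-degrees lie in $\tfrac12\mathbb{Z}$, which the paper assumes implicitly and which the twists $X_i\mapsto q^{1/2}X_i^{-1}$ force anyway.
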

	
	In the same way consider the case $J=\{1,2\}$. In this case
	\[\fp_{12}^{red}=\langle h_1,h_2 \rangle \oplus \fsl_2^{(3)},\]
	where $\fsl_2^{(3)}$ is the third copy of $\fsl_2$.
	Up to the shifts $\sh{l}{r}$ the irreducible modules over this algebra have the form $V_{12}(\mu_1,\mu_2,\mu_3)=L_{\mu_1 \varepsilon_1}\otimes L_{\mu_2 \varepsilon_2}\otimes V_{\mu_3 \varepsilon_3}$, where $L_{\mu_i \varepsilon_i}$ is the one-dimensional representation of the Lie algebra $\langle h_i \rangle$, $\mu_1,\mu_2 \in \mathbb{Z}$, $\mu_3\geq 0$. Therefore
	\begin{equation*}
		\ch(V_{12}(\mu_1,\mu_2,\mu_3))= X_1^{\mu_1}X_2^{\mu_2} \frac{X_3^{\mu_3}-X_3^{-\mu_3-2}}{1-X_3^{-2}}
	\end{equation*} 
	
	The same computations hold for $\fp_{13}$ and $\fp_{23}$. Thus we have
	\begin{lemma}
		Let $\{i,k\}=\{1,2,3\}\backslash \{j\}$. The Grothendieck group of the Lie superalgebra $\fp_{ik}$ is isomorphic to
		\[\iota_{ik}(K_0(\mathcal{O}^{gr}(\fp_{ik})))=\mathbb{Z}[X_1^{\pm 1},X_2^{\pm 1},X_3^{\pm 1},t^{\pm 1}]^{s_j}((q^{\frac{1}{2}})).\]
	\end{lemma}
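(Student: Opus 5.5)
The plan mirrors the $\fp_{123}$ case: first identify the simple objects of $\mathcal{O}^{gr}(\fp_{ik})$, then show that $\iota_{ik}$ carries their classes onto a topological basis of the target ring, while controlling the completion in $q$.

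For the simple objects, the radical of $\fp_{ik}$ acts nilpotently on every object of $\mathcal{O}^{gr}(\fp_{ik})$. It therefore annihilates each irreducible, as explained in Subsection~\ref{ssec:CategoriesOfRepresentations}. Hence the simple objects are the simple graded modules over the reductive part $\fp_{ik}^{red}=\langle h_i,h_k\rangle\oplus\fsl_2^{(j)}$. Integrability forces $h_i,h_k$ to act by integer scalars on a simple module and forces the $\fsl_2^{(j)}$-factor to be finite dimensional. So up to shifts $\sh{l}{r}$ the simples are exactly the modules $V_{ik}(\mu)$ with $\mu_i,\mu_k\in\mathbb{Z}$ and $\mu_j\geq 0$, and their characters are $X_i^{\mu_i}X_k^{\mu_k}\frac{X_j^{\mu_j}-X_j^{-\mu_j-2}}{1-X_j^{-2}}$, as computed above for $\fp_{12}$.

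These characters are $s_j$-invariant. Modulo lower terms in the dominance order on the $j$-th coordinate, the character of $V_{ik}(\mu)$ has leading term $X_i^{\mu_i}X_k^{\mu_k}X_j^{\mu_j}$, with a triangular expansion in the orbit sums $X_j^{m}+X_j^{-m}$ (just $1$ when $m=0$). Multiplying by $q^l(-t)^r$, the characters of all shifted simples form a basis of $\mathbb{Z}[X^{\pm1},t^{\pm1}]^{s_j}$ over $\mathbb{Z}[q^{\pm1/2}]$. By the unitriangularity this basis is also integral, not merely rational.

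The step I expect to need the most care is passing to the completion. I will use the fact, stated in Subsection~\ref{ssec:CategoriesOfRepresentations}, that each module has finitely many composition factors in each fixed $z$-degree, with $z$-degrees bounded below. Then the class of any object, and more generally any element of $K_0$, is a formal sum of shifted simples with finitely many terms in each $q$-degree. So $\iota_{ik}$ is well defined and injective, since characters of distinct simples are linearly independent degree by degree. Conversely, every element of $\mathbb{Z}[X^{\pm1},t^{\pm1}]^{s_j}((q^{1/2}))$ can be expanded degree by degree in the basis above, and hence is realized as an infinite sum of classes of simples. Such a sum lies in $K_0$ by the convention that $K_0$ is the completion described there. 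I would also note that the half-integer powers of $q$ are allowed exactly as in the $\fp_{123}$ case. This identifies the image with $\mathbb{Z}[X_1^{\pm1},X_2^{\pm1},X_3^{\pm1},t^{\pm1}]^{s_j}((q^{1/2}))$.
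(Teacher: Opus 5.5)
Your proposal is correct and follows essentially the same route as the paper. The paper identifies the simple objects of $\mathcal{O}^{gr}(\fp_{ik})$ as the shifts $V_{ik}(\mu)\sh{l}{r}$ (the radical acting trivially), computes their $s_j$-invariant characters, and then declares the identification of the image clear, exactly as in the $\fp_{123}$ case; your unitriangular expansion in the $s_j$-orbit sums and your degree-by-degree handling of the $q^{\frac12}$-completion (which, as you note, rests on the paper's convention that $K_0$ is the completed group) just spell out what the paper leaves implicit.
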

	
	% 	Consider the category $\mathcal O({\fh})$ of finite-dimensional $\fh$-modules graded with respect to weight grading and the additional grading. Consider $M \in \mathcal O(\fh)$. Let $m_1, \dots, m_l$ be a homogeneous basis of $M$. Let $a_1^i\varepsilon_1+a_2^i\varepsilon_2+a_3^i\varepsilon_3+b^i\zeta$ be a weight of an element $m_i$. We denote by the character of $M$ the sum of formal exponents of these weights. More precisely, denote $X_i:=X^{\varepsilon_i}$, $t:=X^{\zeta}$. Then
	% 	\[ch(M):=\sum_{j=1}^l X_1^{a_1^i}X_2^{a_2^i}X_3^{a_3^i}t^{b^i}.\]
	
	\begin{example} We list the characters of the algebras we deal with.
		\[\ch(\mathcal{I})=3+X_1^2+X_2^2+X_3^2-tX_1^{-1}X_2X_3-tX_1X_2^{-1}X_3-tX_1X_2X_3^{-1}-tX_1X_2X_3+\frac{q}{1-q}\ch \Ddeg,\]
		\begin{equation*}
			\label{eq:ParabolicAlgebraCharacter}
			\ch(\fp_{12})=\ch(\mathcal{I})-tX_1^{-1}X_2X_3^{-1}-tX_1X_2^{-1}X_3^{-1}+X_3^{-2},
		\end{equation*}
		\begin{equation*} \label{eq:CurrentAlgebraCharacter}   
			\ch(\fp_{123})=\frac{1}{1-q}\ch \Ddeg.
		\end{equation*}
		
		Here $\ch \Ddeg$ is the character of the finite dimensional superalgebra and
		\[\ch \Ddeg=\sum_{i=1}^3 (X_i^{2}+1+X_i^{-2})-t(X_1+X_1^{-1})(X_2+X_2^{-1})(X_3+X_3^{-1}).\]
	\end{example}
	
	\subsection{Graded \texorpdfstring{$\Hom$}{Hom} and \texorpdfstring{$\Ext$}{Ext} pairing}\label{ssec:ExtPairing}
	In this subsection we introduce the $\Ext$ pairing on the Grothendieck group of $\mathcal{O}^{gr}(\fp_J)$. It is used in Section~\ref{sec:Orthogonality}, where we compute it explicitly for $J=\{1,2,3\}$.
	
	Let $M, N \in \mathcal{O}^{gr}(\fp_J)$ be two modules. Recall the grading shifts $M\sh{l}{r}$ introduced in \eqref{eq:ShiftDefinition}. We have the natural isomorphisms
	\[\Hom\bigl(M ,N\sh{l}{r}\bigr)\simeq \Hom\bigl(M\sh{-l}{-r} ,N\bigr).\]
	
	We consider the graded sums of these vector spaces
	\[\Hom_{gr}(M ,N):=\bigoplus_{l,r \in \mathbb{Z}}\Hom\bigl(M ,N\sh{l}{r}\bigr),\qquad
	\Ext^{i}_{gr}(M ,N):=\bigoplus_{l,r \in \mathbb{Z}}\Ext^{i}\bigl(M ,N\sh{l}{r}\bigr),\]
	regarded as $\mathbb{Z}^2$-graded vector spaces, the summand indexed by $(l,r)$ sitting in degree $(l,r)$.
	
	For a $\mathbb{Z}^2$-graded vector space $V=\bigoplus_{l,r}V_{l,r}$ we write
	\begin{equation*}\label{eq:GradedSuperdimension}
		\dim_{q,t}(V):=\sum_{l,r \in \mathbb{Z}}\dim(V_{l,r})\,q^l(-t)^r,
	\end{equation*}
	with the same normalization of $t$ as in \eqref{eq:CharacterDefinition}.
	
	\begin{definition}
		\[(M,N)_{\Ext}:=\sum_{i=0}^{\infty}(-1)^i\dim_{q,t}(\Ext^{i}_{gr}(M ,N)) \in \mathbb{Z}\left[t^{\pm 1}\right]((q^{\frac{1}{2}})).\]
	\end{definition}
	
	Note that this sum in general may be not well defined. However the radical of our algebras have positive $q,t$ degrees, therefore such a sum is well defined formal series.
	
	Consider short exact sequences
	\[0 \rightarrow M_1 \rightarrow M \rightarrow M_2 \rightarrow 0,\]
	\[0 \rightarrow N_1 \rightarrow N \rightarrow N_2 \rightarrow 0.\]
	
	Then the $\Ext$ long exact sequence gives the following equalities:
	\begin{equation*}\label{eq:ExtPairingShortDecomposition}
		(M,N)_{\Ext}=(M_1,N)_{\Ext}+(M_2,N)_{\Ext}=(M,N_1)_{\Ext}+(M,N_2)_{\Ext}.
	\end{equation*}
    See for example \cite{weibel1994introduction}.
	
	Moreover by definition the $\Ext$ pairing is $q,t$-skew linear:
	\begin{equation*}
		\bigl(M\sh{l}{r},N\bigr)_{\Ext}=(M,N)_{\Ext}\,q^l(-t)^r=\bigl(M,N\sh{-l}{-r}\bigr)_{\Ext}.
	\end{equation*}
	
	Therefore the value of the $\Ext$ pairing depends only on the class of the module in the Grothendieck group.
	
	We first check that the category we work with has enough projectives and describe them.
	
	\begin{lemma}\label{lem:Projectivity}
		For any irreducible $\fp_J^{red}$-module $V$ the module
		\[P(V):=\U(\fp_J)\otimes_{\U(\fp_J^{red})}V\]
		is projective in $\mathcal{O}^{gr}(\fp_J)$. The category $\mathcal{O}^{gr}(\fp_J)$ has enough projectives, and $P_J(\mu):=P(V_J(\mu))$ is the projective cover of $V_J(\mu)$.
	\end{lemma}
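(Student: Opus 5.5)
The plan is to use the adjunction between induction and restriction, together with the fact that in $\mathcal{O}^{gr}(\fp_J^{red})$ every object is semisimple, hence projective.

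\textbf{Projectivity of $P(V)$.} By Frobenius reciprocity for the pair $\fp_J^{red}\subset\fp_J$, there is a natural isomorphism
\[\Hom_{\fp_J}(P(V),M)\simeq\Hom_{\fp_J^{red}}(V,\Res M).\]
Restriction to $\fp_J^{red}$ is exact. Every object of $\mathcal{O}^{gr}(\fp_J)$ is integrable over $\fp_J^{red}$, which is reductive (or, for $\fp_{ij}$, a Cartan part plus one $\fsl_2$), and the multigrading is preserved by the action. Hence $\Res M$ is a direct sum of graded shifts of finite dimensional irreducibles. Consequently $\Hom_{\fp_J^{red}}(V,-)$ is exact on such modules, and so $\Hom(P(V),-)$ is exact.

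One also has to check that $P(V)$ lies in $\mathcal{O}^{gr}(\fp_J)$. By PBW, $P(V)\simeq\U(R)\otimes V$ as a $\fp_J^{red}$-module, with $\fp_J^{red}$ acting on $\U(R)$ by the adjoint action. The radical $R$ is a sum of finite dimensional integrable $\fp_J^{red}$-modules in each $z$-degree. Its graded pieces have positive $(q,t)$-degree, except possibly finitely many odd pieces of degree $q^0$, namely the odd roots in $\fb$. These generate a finite exterior algebra in $z$-degree $0$, while everything else strictly raises the $z$-degree. So each graded component of $\U(R)\otimes V$ is finite dimensional and integrable, and the weights are bounded from above in each $z$-degree.

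\textbf{Enough projectives.} Take $M\in\mathcal{O}^{gr}(\fp_J)$ and decompose $\Res M=\bigoplus_\beta V_\beta$ into graded irreducible $\fp_J^{red}$-summands. The canonical map $\bigoplus_\beta P(V_\beta)\to M$ is surjective. The direct sum is again projective. The one point to verify is that it lies in the category, i.e.\ that gradings and weight bounds stay controlled. If this fails for arbitrary direct sums, I would restrict to finitely generated $M$, or truncate by $z$-degree, using that $R$ raises $(q,t)$-degree.

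\textbf{Projective cover.} Apply the adjunction with $M=V_J(\mu)$ viewed as a $\fp_J$-module on which $R$ acts by zero. The identity map gives a surjection $P_J(\mu)\to V_J(\mu)$, whose kernel is $R\,\U(R)\otimes V=\U(\fp_J)R\otimes V$.

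To show the cover is essential, I would argue that $P_J(\mu)$ has the simple head $V_J(\mu)$. The key input is that every simple object of $\mathcal{O}^{gr}(\fp_J)$ is annihilated by $R$, as noted in Subsection~\ref{ssec:CategoriesOfRepresentations}. Therefore
\[\Hom(P_J(\mu),L)=\Hom_{\fp_J^{red}}(V_J(\mu),L)\]
for every simple $L$, and this space is at most one dimensional up to shift. So the radical of $P_J(\mu)$ equals $R\,P_J(\mu)$, which is the kernel above. Any submodule $N$ with $N+\ker=P_J(\mu)$ must contain the generating vector $1\otimes V$ modulo $R\,P_J(\mu)$.

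\textbf{Main obstacle.} The delicate step is the graded Nakayama argument showing that such an $N$ is all of $P_J(\mu)$. It uses that the grading is bounded below, with $R$ contributing strictly positive total degree. For $\fp_{ij}$ this fails literally, because of the odd vectors in $z$-degree $0$. I would handle it by combining the $z$-degree with the $t$-degree: $R$ has positive degree in $q^{1/2}$ or $t$, so an induction on this combined degree gives the result.
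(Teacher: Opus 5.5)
Your route is the paper's: Frobenius reciprocity plus semisimplicity of $\Res M$ over $\fp_J^{red}$ for projectivity, the PBW identification $P(V)\simeq\U(R)\otimes V$ with head $P(V)/R\,P(V)\simeq V$, and a surjection from $\bigoplus_\beta P(V_\beta)$. Your added checks (that $P(V)$ lies in the category, and the graded Nakayama step) make explicit what the paper only asserts. However, they rest on a wrong description of $R$. For $J=\{i,j\}$ one has $\fp_{ij}^{red}=\fh\oplus\fsl_2^{(k)}$, so the even root vectors $e_i=e_{2\varepsilon_i}$ and $e_j=e_{2\varepsilon_j}$, of $z$-degree $0$ and $t$-degree $0$, lie in $R$. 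Three things then fail:
\begin{enumerate}
\item $\U(R)$ in $z$-degree $0$ contains the polynomial ring $\Bbbk[e_i,e_j]$, not just a finite exterior algebra.
\item The weights of $P(V)$ in $z$-degree $0$ (those of $e_i^n\otimes v$) are unbounded above.
\item $R$ is not of positive $(q,t)$-degree, so your Nakayama induction on the combined degree breaks as well.
\end{enumerate}
The obstruction you name, odd vectors in $z$-degree $0$, is present for $\fp_{123}$ too and is harmless once the $t$-degree is counted. The even vectors are the real problem.

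This cannot be repaired within the statement. Work in a category whose objects have weights bounded above (Definition~\ref{def:CategoryO}, which is what makes $\widetilde{\Ind}$ well defined). Killing $e_j$, $K$, the odd vectors of $z$-degree $0$ and $\Ddeg\otimes z\Bbbk[z]$ gives a quotient $\fp_{ij}\twoheadrightarrow\fp_{ij}^{red}\ltimes\Bbbk e_i$. Hence $V_{ij}(\mu)$ is a quotient of the finite dimensional objects $M_N:=\Bbbk[e_i]/(e_i^N)\otimes V_{ij}(\mu)$ for every $N$. A projective $P\twoheadrightarrow V_{ij}(\mu)$ would lift to maps $P\to M_N$, and these are surjective because $e_i$ is nilpotent on $M_N$. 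So $P$ would contain the weights $\mu+2(N-1)\varepsilon_i$ for all $N$, which is impossible. Thus no projective object of $\mathcal{O}^{gr}(\fp_{ij})$ surjects onto $V_{ij}(\mu)$.

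Consequently your argument, like the paper's, is valid only for $J=\{1,2,3\}$ among the parabolics used in the paper. There $\fp_{123}^{red}$ contains every even vector of $z$-degree $0$ and $R/\Bbbk K$ sits in strictly positive $q+t$ degree, so your proof goes through. You must still work at level zero and replace $\U(\fp_J)$ by $\U(\fp_J)/(K)$; otherwise $\Bbbk[K]$ sits in degree zero and the graded pieces of $P(V)$ are infinite dimensional. This $J=\{1,2,3\}$ case is the one needed for the $\Ext$ pairing in Section~\ref{sec:Orthogonality}.
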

	\begin{proof}
		By the tensor-hom adjunction, for any $M \in \mathcal{O}^{gr}(\fp_J)$ we have a natural isomorphism
		\[\Hom_{\fp_J}(P(V),M)\simeq \Hom_{\fp_J^{red}}\bigl(V,\Res^{\fp_J}_{\fp_J^{red}}M\bigr).\]
		The algebra $\fp_J^{red}$ is reductive and $M$ is integrable over it, hence $\Res^{\fp_J}_{\fp_J^{red}}M$ is a semisimple $\fp_J^{red}$-module and the functor $\Hom_{\fp_J^{red}}(V,\bullet)$ is exact on the essential image of the restriction. Therefore $\Hom_{\fp_J}(P(V),\bullet)$ is exact, i.e.\ $P(V)$ is projective.
		
		As a $\fp_J^{red}$-module $P(V)\simeq \U(R)\otimes V$, where $R$ is the radical of $\fp_J$. Since $R$ acts nilpotently, $P(V)$ has a unique maximal submodule, namely $R\cdot P(V)$, and $P(V)/R\cdot P(V)\simeq V$. Hence $P(V)$ is the projective cover of the irreducible module with the same highest weight.
		
		Finally, every $M \in \mathcal{O}^{gr}(\fp_J)$ is a direct sum of irreducible submodules as $\fp_J^{red}$ module, so there is a surjection onto $M$ from a direct sum of modules of the form $P_J(\mu)\sh{l}{r}$. Thus the category has enough projectives.
	\end{proof}
	
	For an irreducible module $V_{J}(\mu)$ let $P_{J}(\mu)$ be its projective cover, as in Lemma~\ref{lem:Projectivity}. Projective modules are $\Ext$-acyclic, i.e.\
	\[\Ext^{i}(P_{J}(\mu) ,N)=\{0\},~ i \geq 1.\]
	Moreover
	\[\dim \Hom\bigl(P_{J}(\mu)\sh{l}{r},V_{J}(\mu')\sh{l'}{r'} \bigr)=\delta_{\mu,\mu'}\delta_{l,l'}\delta_{r,r'}.\]
	
	Therefore one gets
	\begin{equation*}
		\bigl(P_{J}(\mu)\sh{l}{r},V_{J}(\mu')\sh{l'}{r'}\bigr)_{\Ext} =q^{l-l'}(-t)^{r-r'}\delta_{\mu,\mu'}.
	\end{equation*}
	
	Let $(-,-): K_0( \mathcal{O}^{gr}(\fp_{J}))\times K_0( \mathcal{O}^{gr}(\fp_{J})) \rightarrow \mathbb{Z}[t^{\pm 1}]((q^{\pm \frac{1}{2}}))$ be the pairing on the Grothendieck group induced by the $\Ext$ pairing:
	\begin{equation*}
		([M],[N]):=(M,N)_{\Ext}.
	\end{equation*}
	
	Then this is a skew $\mathbb{Z}[q^{\pm \frac{1}{2}},t^{\pm 1}]$-bilinear form, i.e.\
	\[([M]q^lt^r,[N])=([M],[N])q^lt^r=([M],[N]q^{-l}t^{-r}).\]
	
	The classes of projective modules form a dual basis for the classes of irreducible modules, i.e.\
	\begin{equation*}
		([P_{J}(\mu)],[V_{J}(\mu')])=\delta_{\mu,\mu'}.
	\end{equation*}
	
	This pairing naturally extends to the derived categories. For $M^\bullet, N^\bullet \in D^b(\mathcal{O}^{gr}(\fp_{J}))$ one can define 
	
	\begin{definition}
		\[(M^\bullet,N^\bullet )_{\Ext}:=\sum_{i=-\infty}^{\infty}(-1)^i\dim_{q,t}\bigl(\Hom_{gr}(M^\bullet ,N^\bullet[i])\bigr),\]
		where $[i]$ is the homological shift. 
	\end{definition}
	
	If the modules $M,N$ are identified with the complexes concentrated in zero degree, then both definitions of $\Ext$-pairing are equivalent.
	
	\section{Functors on the categories of representations}\label{sec:Functors} 
	The goal of this section is to study the functors between categories of representations of the parabolic subalgebras of $\Daff$. The main results of this paper mean that these functors behave in the same way as the functors between certain parabolic subalgebras of the Lie superalgebra $\fg \otimes \Bbbk[\xi]$, where $\xi$ is an odd variable and $\fg$ is a Kac--Moody Lie algebra, see \cite{KKhM}. This similarity gives us a way to see why the algebra of the decategorifications of these functors behaves similarly to the classical Double Affine Hecke Algebra \cite{Cherednik-2005}. For the general theory of Zuckerman functors, see for example \cite{Vogan1981}.
	
	\subsection{Restriction and induction functors}
	Consider a pair of Lie superalgebras $\fg_1 \subset \fg_2$ and let $\rho$ be a natural inclusion. We first recall the three standard functors relating the categories of modules over $\fg_1$ and over $\fg_2$, and then their integrable versions, which are the ones we actually use.
	
	\begin{definition}
		The restriction functor $\Res_{\fg_1}^{\fg_2}$ sends a $\fg_2$-module $M$ to the same vector space regarded as a $\fg_1$-module, and a morphism of $\fg_2$-modules to the same map of vector spaces.
	\end{definition}
	
	This functor is exact and preserves finite-dimensionality.
	
	\begin{definition}
		For a \(\fg_1\)-module \(V\) the \textbf{induced} and the \textbf{coinduced} modules are
		\[
		\Ind_{\fg_1}^{\fg_2}(V) = \U(\fg_2) \otimes_{\U(\fg_1)} V,\qquad
		\coInd_{\fg_1}^{\fg_2}(V) = \Hom_{\U(\fg_1)}(\U(\fg_2),V).
		\]
	\end{definition}
	
	Let $\fg_2^{red}$ be the reductive part of $\fg_2$ in the sense of \eqref{eq:ReductivePart}. Recall that it is contained in the even subalgebra $\fg_{2}^{\overline 0}$. Let $M$ be a $\fg_2$-module.
	\begin{definition}
		We denote by $M_{int}$ the maximal $\fg_2^{red}$ integrable quotient of $M$, and by $M^{int}$ the maximal $\fg_2^{red}$ integrable submodule of $M$.
	\end{definition}
	
	Note that at this moment we have not discussed the existence of $M_{int}$ and $M^{int}$.
	
	Next we define integrable induction and coinduction functors.
	
	\begin{definition}
		The integrable induction and coinduction functors are
		\[
		\widetilde \Ind_{\fg_1}^{\fg_{2}}(V) = \bigl(\U(\fg_2) \otimes_{\U(\fg_1)} V\bigr)_{int},\qquad
		\widetilde \coInd_{\fg_1}^{\fg_{2}}(V) = \bigl(\Hom_{\U(\fg_1)}(\U(\fg_2),V)\bigr)^{int}.
		\]
	\end{definition}
	
	Recall that the categories $\mathcal{O}(\fg_i)$, $\mathcal{O}^\vee(\fg_i)$, $i=1,2$, were introduced in Definition~\ref{def:CategoryO} and that the pair $\fg_1 \subset \fg_2$ is assumed to satisfy Assumption~\ref{ass}.
	
	\begin{lemma}
		\begin{itemize}
			\item $\Res_{\fg_1}^{\fg_2}(\mathcal{O}(\fg_2))\subset \mathcal{O}(\fg_1)$, $\Res_{\fg_1}^{\fg_2}(\mathcal{O}^\vee(\fg_2))\subset \mathcal{O}^\vee(\fg_1)$,
			\item $\widetilde \Ind_{\fg_1}^{\fg_{2}}\mathcal{O}(\fg_1) \subset \mathcal{O}(\fg_2)$,
			\item $\widetilde \coInd_{\fg_1}^{\fg_{2}}\mathcal{O}^\vee(\fg_1) \subset \mathcal{O}^\vee(\fg_2)$.
		\end{itemize}
	\end{lemma}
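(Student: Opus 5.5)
The plan is to prove the three inclusions in turn. The first is essentially formal; the second and third need a weight-boundedness argument that uses Assumption~\ref{ass}, and in particular $\dim(\fg_2/\fg_1)<\infty$.

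\emph{Restriction.} Let $M\in\mathcal{O}(\fg_2)$. The weights of $\Res M$ are the same as those of $M$, so they stay bounded from above. For integrability, note that $\fg_1^{red}\subset\fg_1$ is a reductive subalgebra whose root vectors are among those of $\fg_2$. Concretely, they are even root vectors lying either in $\fg_2^{red}$ or in the radical of $\fg_2$:
\begin{itemize}
\item[(a)] on a root vector in $\fg_2^{red}$ the action is locally nilpotent because $M$ is $\fg_2^{red}$-integrable;
\item[(b)] on a root vector in the radical it is locally nilpotent because the radical acts nilpotently on objects of $\mathcal{O}(\fg_2)$.
\end{itemize}
Hence $\fg_1^{red}$ acts locally finitely and $\fh$ acts semisimply, which is integrability. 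The argument for $\mathcal{O}^\vee$ is identical.

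\emph{Induction.} Let $V\in\mathcal{O}(\fg_1)$. By PBW, $\Ind V=\U(\fg_2)\otimes_{\U(\fg_1)}V$ is a quotient of $\U(\mathfrak m)\otimes V$, where $\mathfrak m$ is any graded complement of $\fg_1$ in $\fg_2$. By the first bullet of Assumption~\ref{ass}, $\fb_2\subset\fg_1$, so every positive root space of $\fg_2^{red}$ already lies in $\fg_1$; the same is true for the root vectors of the radical of positive weight. Therefore $\mathfrak m$ can be chosen inside $\fb_2^-$ plus finitely many other root vectors, and in all our situations these have non-positive weight. Since $\mathfrak m$ is finite dimensional, the weights of $\Ind V$ are contained in (weights of $V$) $+\,\mathbb{Z}_{\le 0}$-combinations of finitely many roots, which is bounded above.

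Two facts then finish the argument. First, $(\Ind V)_{int}$ exists: it is the quotient by the intersection of all submodules with integrable quotient. This intersection has integrable quotient because integrability is a local-finiteness condition, and a submodule of a product of integrable modules, restricted to the finitely many weight-bounded pieces, remains integrable. Second, a quotient of $\Ind V$ keeps its weights bounded above.

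\emph{Coinduction.} This case is dual. $\coInd V=\Hom_{\U(\fg_1)}(\U(\fg_2),V)\cong\Hom_\Bbbk(\U(\mathfrak m),V)$ has weights equal to (weights of $V$) minus weights of $\U(\mathfrak m)$, hence bounded below. The maximal integrable submodule exists trivially, since it is the sum of all integrable submodules, and it inherits the lower bound.

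The main obstacle is the induction case, for two reasons. First, the existence of the maximal integrable quotient must be shown. Second, one has to justify that the complement $\mathfrak m$ has non-positive weights. The second bullet $\fg_1^{\overline 0}+\fb_2^-=\fg_2^{\overline 0}$ only controls the even part. For the odd part I would argue directly from $\dim(\fg_2/\fg_1)<\infty$: whatever the weights of $\mathfrak m$ are, $\mathfrak m$ is finite dimensional. Its odd part is exterior, so the odd contribution to $\U(\mathfrak m)$ is finite dimensional and shifts weights by a bounded amount. The even part is covered by $\fb_2^-$. Combining the two gives the upper bound.
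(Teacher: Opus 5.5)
Your argument is essentially the paper's. Restriction is immediate. For induction and coinduction, PBW gives $\Ind V\simeq S(\mathfrak m)\otimes V$ and $\coInd V\simeq \Hom_\Bbbk(S(\mathfrak m),V)$ with $\mathfrak m$ finite dimensional. The odd part of $\mathfrak m$ contributes only a finite-dimensional exterior factor, and by $\fg_1^{\overline 0}+\fb_2^-=\fg_2^{\overline 0}$ the even part can be taken with negative weights, which gives the weight bounds.

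Note that it is your final paragraph that carries the induction step. The earlier remark that positive-weight radical root vectors lie in $\fg_1$ does not follow from $\fb_2\subset\fg_1$, and it is not needed. Your existence argument for $(\Ind V)_{int}$ by intersecting kernels is a sketchy but workable alternative to the paper's construction, which it gives in the next lemma.
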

	\begin{proof}
		The first claim is obvious. For the second claim recall the Cartan subalgebra $\fh \subset \fg_1$ of Assumption~\ref{ass}. Since $\fh$ acts semisimply on $\fg_2$ by
		the adjoint action and preserves $\fg_1$, we may write $\fg_2=\fg_1\oplus \fn$, where $\fn$ is a sum of root spaces of $\fg_2$. By Assumption~\ref{ass} it is finite
		dimensional. Then for any $M \in \mathcal{O}(\fg_1)$ we have an isomorphism of $\fh$-modules
		\[\Ind_{\fg_1}^{\fg_{2}}(M)\simeq S(\fn)\otimes M,\]
		where $S(\fn)$ is a symmetric algebra of a superspace. In particular, $S(\fn)=S(\fn^{\overline 0})\otimes S(\fn^{\overline 1})$. However $S(\fn^{\overline 1})$ is finite-dimensional and $\fn^{\overline 0}$ contains only negative weights. Therefore the boundedness of weights holds and the maximal integrable quotient of this module is contained in $\mathcal{O}(\fg_2)$.
		
		In the same way for $M \in \mathcal{O}^\vee (\fg_1)$:
		\[\coInd_{\fg_1}^{\fg_{2}}(M)\simeq S(\fn)^\vee\otimes M,\]
		and all the weights of $S(\fn^{\overline 0})^\vee$ are positive. Therefore the boundedness of weights holds and the maximal integrable submodule of this module is contained in $\mathcal{O}^\vee(\fg_2)$.
	\end{proof}
	
	\begin{lemma}
		$\widetilde\Ind_{\fg_1}^{\fg_2}:\mathcal{O}(\fg_1)\rightarrow \mathcal{O}(\fg_2)$ is well defined, as well as $\widetilde\coInd_{\fg_1}^{\fg_2}:\mathcal{O}^\vee(\fg_1)\rightarrow \mathcal{O}^\vee(\fg_2)$.
	\end{lemma}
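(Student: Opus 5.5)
The plan is to prove existence of $M_{int}$ for every $M=\Ind_{\fg_1}^{\fg_2}(V)$ with $V\in\mathcal{O}(\fg_1)$, and of $M^{int}$ for every $N=\coInd_{\fg_1}^{\fg_2}(V)$ with $V\in\mathcal{O}^\vee(\fg_1)$. Functoriality then comes for free, because both constructions are universal. Combined with the previous lemma, which places the results in $\mathcal{O}(\fg_2)$ and $\mathcal{O}^\vee(\fg_2)$ respectively, this proves the statement.

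For the coinduced module the argument is the easy one. I define $N^{int}$ as the subspace of $\fg_2^{red}$-locally finite vectors that are sums of $\fh$-weight vectors. Integrability for a reductive Lie algebra in category-$\mathcal{O}$-type situations means exactly: $\fh$-semisimple and locally finite. The key points are these.
\begin{itemize}
\item A sum of locally finite vectors is locally finite, so $N^{int}$ is a subspace.
\item $N^{int}$ is $\fg_2$-stable. If $v$ generates a finite dimensional $\fg_2^{red}$-module $F$, then $\fg_2\cdot F$ is the image of $\fg_2\otimes F$ under the equivariant action map. Since $\fg_2$ is a locally finite $\fg_2^{red}$-module under the adjoint action (it is a direct sum of finite dimensional pieces: $\fg_2^{red}$ itself, and the radical graded in $z$-degree), $x v$ lies in a finite dimensional $\fg_2^{red}$-submodule.
\end{itemize}
Any integrable submodule consists of such vectors, so $N^{int}$ is the maximal one.

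For the induced module I dualize this. Let $K\subset M$ be the sum of all submodules $K'$ with $M/K'$ integrable, and set $M_{int}=M/K$. The main obstacle is showing that $M/K$ is itself integrable. An infinite sum of such $K'$ need not give an integrable quotient in general. Here is what I would try first.
\begin{itemize}
\item Because $\fg_1\supset\fb_2$ and $V$ is $\fg_1$-integrable, the $\fb_2^-$-action on $M$ is generated from $V$ by the finitely many negative root directions in $\fn$. This uses Assumption~\ref{ass}: $\fg_1^{\overline 0}+\fb_2^-=\fg_2^{\overline 0}$ and $\dim\fg_2/\fg_1<\infty$.
\item By the previous lemma, $M$ has weights bounded above, and each weight space has finite dimension in each graded component.
\item An $\fh$-semisimple module with weights bounded above is $\fg_2^{red}$-integrable iff each $f_\beta$ (for $\beta$ a simple even root not in $\fg_1$) acts locally nilpotently. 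Indeed, the $e_\beta$ act locally nilpotently automatically because of the weight bound, and the other root vectors already act integrably via $\fg_1$.
\item So I take $K$ to be the submodule generated by vectors $f_\beta^{N}v$ with $v$ a weight vector, choosing for $v\in V$ of weight $\lambda$ the exponent $N=\langle\lambda,\beta^\vee\rangle+1$.
\end{itemize}
Each such vector must lie in every $K'$. This follows from $\fsl_2$-theory: in an integrable quotient the image of $v$ is $e_\beta$-primitive, since $e_\beta\in\fb_2\subset\fg_1$ acts on $V$. The remaining step is to show that $M/K$ is integrable. It is generated by the image of $V$, on which each $f_\beta$ now acts nilpotently. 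Then the standard argument (e.g.\ as for Kac–Moody integrable highest weight quotients, \cite{Mus}) via $\operatorname{ad}f_\beta$ being locally nilpotent on $\U(\fg_2)$ propagates local nilpotency to all of $M/K$. Thus $M/K$ is integrable and equals $M_{int}$.

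Checking that $\operatorname{ad}f_\beta$ is locally nilpotent on $\fg_2$ (hence on $\U(\fg_2)$) uses again the local finiteness of $\fg_2$ under $\fg_2^{red}$. Functoriality of $(\cdot)_{int}$ and $(\cdot)^{int}$ follows from maximality: a morphism $M\to M'$ sends $K$ into $K'$ and integrable vectors to integrable vectors.
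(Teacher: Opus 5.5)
Your coinduction half is fine. The locally finite weight vectors form a $\fg_2$-submodule because $\fg_2$ is a locally finite, $\fh$-semisimple $\fg_2^{red}$-module under the adjoint action, and this holds for every pair used in the paper. The induction half, however, has a genuine gap. First a slip: $K$ must be the \emph{intersection} of the kernels $K'$ of integrable quotients, not their sum; the sum is all of $M$, since $M/M=0$ is integrable.

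The real problem is your choice of generators $f_\beta^{\langle\lambda,\beta^\vee\rangle+1}(1\otimes v)$. You justify that they lie in every $K'$ by saying the image of $1\otimes v$ is $e_\beta$-primitive because $e_\beta\in\fb_2\subset\fg_1$ acts on $V$. But acting on $V$ is not acting by zero: $e_\beta$ acts on $V$ locally nilpotently, yet in general nontrivially. Take the pair $\fb\subset\fsl_2$, which satisfies Assumption~\ref{ass} and is used in the paper. Let $V=\Res^{\fsl_2}_{\fb}V_{2\varepsilon}$ be the restricted adjoint representation and $v=h$, of weight $0$. Your recipe puts $f\cdot(1\otimes h)$ into $K$. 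But the counit $\Ind_{\fb}^{\fsl_2}\Res^{\fsl_2}_{\fb}V_{2\varepsilon}\to V_{2\varepsilon}$ is a surjection onto an integrable module, and it sends $f\cdot(1\otimes h)$ to $[f,h]=2f\neq 0$. So $K\not\subset K'$, and $M/K$, although integrable, is strictly smaller than the maximal integrable quotient. This case is not artificial: the paper applies $\widetilde\Ind_{\fp_{ij}}^{\fp_{123}}$ to restrictions of $\fp_{123}$-modules, on which $e_i,e_j$ act nontrivially.

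Within your approach, the repair is to let the exponent depend also on the $e_\beta$-string of $v$ in $V$. Choose $a$ with $e_\beta^{a+1}v=0$ in $V$; it exists because weights are bounded above. Put $m=\langle\lambda,\beta^\vee\rangle$. By $\fsl_2$-theory, a weight-$m$ vector of an integrable module killed by $e^{a+1}$ has components only in $V_{n\varepsilon}$ with $n\le m+2a$. Hence it is killed by $f^{m+a+1}$, and it vanishes if $m+a<0$. With generators $f_\beta^{\max(0,m+a+1)}(1\otimes v)$ the inclusion $K\subset K'$ holds, and your propagation argument then finishes the proof.

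The paper avoids this bookkeeping altogether. It kills the weight spaces $M_\lambda$ for which some Weyl conjugate $\sigma(\lambda)$ lies above all weights of $M$. These vanish in every integrable quotient, since integrable modules have $W$-invariant weight support. The resulting quotient then has $W$-bounded weight support, so all root vectors act locally nilpotently. That route needs no information about how $\fb_2$ acts on $V$. Your corrected route instead yields an explicit presentation of $\widetilde\Ind V$.
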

	\begin{proof}
		We prove the first claim, the proof of the second claim is similar. We need to check that $\Ind_{\fg_1}^{\fg_2}(M)$ contains a unique maximal submodule $N$ such that $\Ind_{\fg_1}^{\fg_2}(M)/N$ is integrable.
		
		Note that the weights of $\Ind_{\fg_1}^{\fg_2}(M)$ are bounded from above. Let $S \subset \Ind_{\fg_1}^{\fg_2}(M)$ be a subspace spanned by all the elements of the weights $\lambda$ such that for some $\sigma$ from the Weyl group $\sigma(\lambda)$ is greater than any weight of $\Ind_{\fg_1}^{\fg_2}(M)$. Therefore this space is contained in the kernel of any surjection $\Ind_{\fg_1}^{\fg_2}(M)\rightarrow V$, where $V$ is integrable. We denote by $N$ the submodule of $\Ind_{\fg_1}^{\fg_2}(M)$ generated by $S$, i.e.\ $N:=\U(\fg_2)S$. We claim that $\Ind_{\fg_1}^{\fg_2}(M)/N$ is integrable. Indeed its weights are contained in the convex hull of the Weyl group image of the weights of $M$. Therefore as a module over the reductive Lie algebra $\fg_2^{red}$ it is integrable.
	\end{proof}
	
	The next proposition contains a list of properties of these functors.
	
	\begin{proposition}\label{prop:Adjoint}
		\begin{itemize}
			\item $\widetilde\Ind_{\fg_1}^{\fg_2}:\mathcal{O}(\fg_1)\rightarrow \mathcal{O}(\fg_2)$ is left adjoint to $\Res_{\fg_1}^{\fg_2}:\mathcal{O}(\fg_2)\rightarrow \mathcal{O}(\fg_1)$, in particular, $\widetilde\Ind_{\fg_1}^{\fg_2}$ is right exact,
			\item $\widetilde\coInd_{\fg_1}^{\fg_2}:\mathcal{O}^\vee(\fg_1)\rightarrow \mathcal{O}^\vee(\fg_2)$ is right adjoint to $\Res_{\fg_1}^{\fg_2}:\mathcal{O}^\vee(\fg_2)\rightarrow \mathcal{O}^\vee(\fg_1)$, in particular, $\widetilde\coInd_{\fg_1}^{\fg_2}$ is left exact.
		\end{itemize}    
	\end{proposition}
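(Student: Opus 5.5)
The plan is to compose the usual Frobenius reciprocity with the universal property of the maximal integrable quotient (respectively, submodule); exactness then follows formally from the adjunctions.

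\textbf{Adjunction for induction.} Let $V\in\mathcal{O}(\fg_1)$ and $M\in\mathcal{O}(\fg_2)$. Tensor--hom adjunction for $\U(\fg_1)\subset\U(\fg_2)$ gives the natural isomorphism
\[\Hom_{\fg_2}\bigl(\U(\fg_2)\otimes_{\U(\fg_1)}V,M\bigr)\simeq\Hom_{\fg_1}\bigl(V,\Res^{\fg_2}_{\fg_1}M\bigr).\]
We claim that every $\fg_2$-morphism $\phi\colon\Ind V\to M$ factors uniquely through $(\Ind V)_{int}=\Ind V/N$, where $N$ is the submodule constructed in the previous lemma. The image $\phi(\Ind V)\subset M$ is $\fg_2^{red}$-integrable, being a submodule of an integrable module. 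Hence $\ker\phi$ is a submodule with integrable quotient, and by maximality of the integrable quotient we get $N\subset\ker\phi$. Concretely, $\phi$ kills the spanning set $S$, since $M$ has no weights $\lambda$ with $\sigma(\lambda)$ above all weights of $\Ind V$. This uses that the weights of $M$ are $W$-invariant and bounded above, and that $\phi$ preserves weights. Composing gives $\Hom_{\fg_2}(\widetilde\Ind V,M)\simeq\Hom_{\fg_1}(V,\Res M)$, natural in both variables: naturality in $M$ is clear, and naturality in $V$ holds because $(-)_{int}$ is functorial, as quotients by $\U(\fg_2)S$ are preserved by module maps, which map $S$ into $S$.

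\textbf{Adjunction for coinduction.} This is dual. For $V\in\mathcal{O}^\vee(\fg_1)$ and $M\in\mathcal{O}^\vee(\fg_2)$ we have
\[\Hom_{\fg_2}\bigl(M,\Hom_{\U(\fg_1)}(\U(\fg_2),V)\bigr)\simeq\Hom_{\fg_1}(\Res M,V).\]
The image of any $\fg_2$-map from the integrable module $M$ is integrable, so it lies in the maximal integrable submodule $(\coInd V)^{int}$. The step needing care is that $(\coInd V)^{int}$ exists, i.e.\ that the sum of integrable submodules is integrable. This holds because a sum of $\fg_2^{red}$-locally finite, $\fh$-semisimple submodules is again such, which was asserted in the preceding lemma.

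\textbf{Exactness.} A left adjoint preserves colimits, in particular cokernels, so $\widetilde\Ind$ is right exact. Dually, a right adjoint preserves kernels, so $\widetilde\coInd$ is left exact.

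The main obstacle I expect is only the verification that $(-)_{int}$ really is the reflector onto integrable objects, i.e.\ that every map to an integrable module kills $N$. I handle this by the weight argument above rather than by abstract maximality.
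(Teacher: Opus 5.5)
This is the paper's proof: Frobenius reciprocity composed with the universal property of the maximal integrable quotient (resp.\ submodule), with right/left exactness read off from the adjunctions. Your abstract step ``$\phi(\Ind V)$ is integrable, so $\ker\phi\supseteq N$'' is exactly what is needed.

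Your ``concrete'' weight check, however, is misstated, and so is your naturality argument.
\begin{itemize}
\item $M$ itself may well have weights $\lambda$ with $\sigma(\lambda)$ above all weights of $\Ind V$. For example, take $\fb\subset\fsl_2$, $V$ trivial and $M$ a large simple module. The $W$-invariance must instead be applied to the integrable image $\phi(\Ind V)$, whose weights all occur in $\Ind V$.
\item Naturality in $V$ should be deduced from this same universal property, not from module maps sending $S$ into $S$. That claim fails because $S$ depends on the module.
\end{itemize}
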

	\begin{proof}
		Let $M \in \mathcal{O}(\fg_1)$ and $N \in \mathcal{O}(\fg_2)$. The usual tensor-hom adjunction gives a natural isomorphism
		\[\Hom_{\fg_2}\bigl(\Ind_{\fg_1}^{\fg_2}M,N\bigr)\simeq \Hom_{\fg_1}\bigl(M,\Res_{\fg_1}^{\fg_2}N\bigr).\]
		Since $N$ is integrable over $\fg_2^{red}$, every morphism $\Ind_{\fg_1}^{\fg_2}M \rightarrow N$ vanishes on the kernel of the projection onto the maximal integrable quotient, i.e.\ factors uniquely through $\widetilde\Ind_{\fg_1}^{\fg_2}M$. Hence
		\[\Hom_{\mathcal{O}(\fg_2)}\bigl(\widetilde\Ind_{\fg_1}^{\fg_2}M,N\bigr)\simeq \Hom_{\fg_2}\bigl(\Ind_{\fg_1}^{\fg_2}M,N\bigr)\simeq\Hom_{\mathcal{O}(\fg_1)}\bigl(M,\Res_{\fg_1}^{\fg_2}N\bigr),\]
		and all the isomorphisms are natural in $M$ and $N$. This is the required adjunction in the sense of Definition~\ref{def:Adjunction}. A left adjoint functor is right exact.
		
		The second claim is proved in the same way, starting from the isomorphism
		\[\Hom_{\fg_2}\bigl(N,\coInd_{\fg_1}^{\fg_2}M\bigr)\simeq \Hom_{\fg_1}\bigl(\Res_{\fg_1}^{\fg_2}N,M\bigr)\]
		and using that the image of an integrable module is contained in the maximal integrable submodule, so that every morphism $N \rightarrow \coInd_{\fg_1}^{\fg_2}M$ factors uniquely through $\widetilde\coInd_{\fg_1}^{\fg_2}M$. A right adjoint functor is left exact.
	\end{proof}
	
	This proposition allows one to define the derived functors
	\[\Lind_{\fg_1}^{\fg_2}:D^-(\mathcal{O}(\fg_1))\rightarrow D^-(\mathcal{O}(\fg_2)),\]
	\[\Rcoind_{\fg_1}^{\fg_2}:D^+(\mathcal{O}^\vee(\fg_1))\rightarrow D^+(\mathcal{O}^\vee(\fg_2)).\]
	
	Note that the derived functors naturally act on the Grothendieck groups. We will denote these operators by
	\[[\widetilde{\Ind}_{\fg_1}^{\fg_2}]:K_0(\mathcal{O}(\fg_1))\rightarrow K_0(\mathcal{O}(\fg_2)).\]
	
	We need the following property of induction and restriction functors.
	\begin{lemma}
		The (derived) induction and coinduction commute with the tensor product by an integrable module. More precisely
		let $B$ be a finite dimensional $\fg_2$ module. Then
		\[\widetilde\Ind_{\fg_1}^{\fg_2}(\bullet \otimes \Res_{\fg_1}^{\fg_2}B)\simeq \widetilde\Ind_{\fg_1}^{\fg_2}(\bullet)\otimes B.\]
	\end{lemma}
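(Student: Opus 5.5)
The plan is to first prove the statement for the non-integrable induction, via the tensor identity, and then pass to maximal integrable quotients and finally to derived functors.

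\textbf{Step 1 (ordinary induction).} For a $\fg_1$-module $M$ and a $\fg_2$-module $B$ we use the standard map
\[\Phi:\U(\fg_2)\otimes_{\U(\fg_1)}\bigl(M\otimes \Res_{\fg_1}^{\fg_2}B\bigr)\longrightarrow \bigl(\U(\fg_2)\otimes_{\U(\fg_1)}M\bigr)\otimes B,\qquad u\otimes(m\otimes b)\mapsto \sum \pm\, u_{(1)}\otimes m\otimes u_{(2)}b,\]
defined through the super coproduct on $\U(\fg_2)$. The steps are as follows.
\begin{enumerate}
\item Check that $\Phi$ is well defined on the relative tensor product. This holds because $\Delta(\U(\fg_1))\subset \U(\fg_1)\otimes\U(\fg_1)$.
\item Check that $\Phi$ is a $\fg_2$-homomorphism.
\item Show that $\Phi$ is bijective. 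Write down the explicit inverse $x\otimes m\otimes b\mapsto \sum\pm x_{(1)}\otimes(m\otimes S(x_{(2)})b)$, where $S$ is the antipode. Alternatively, filter both sides by the PBW degree of $\U(\fg_2)/\U(\fg_1)$ and observe that $\Phi$ induces the identity on the associated graded pieces $S(\fn)\otimes M\otimes B$.
\item Check that $\Phi$ is compatible with the three gradings, since $B$ is graded and all the structure maps are homogeneous.
\end{enumerate}

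\textbf{Step 2 (integrable quotients).} Write $I=\Ind_{\fg_1}^{\fg_2}M$. We have to show that $(I\otimes B)_{int}\simeq I_{int}\otimes B$. The functor $\bullet\otimes B$ has the exact adjoint $\bullet\otimes B^*$, with both adjunctions holding, because $B$ is finite dimensional. Tensoring with $B$ or with $B^*$ preserves integrability over $\fg_2^{red}$, since $B$ is integrable: being finite dimensional, it is a finite dimensional module over the reductive Lie algebra $\fg_2^{red}$. For any integrable $N$ we then get natural isomorphisms
\[\Hom(I\otimes B,N)\simeq\Hom(I,N\otimes B^*)\simeq\Hom(I_{int},N\otimes B^*)\simeq\Hom(I_{int}\otimes B,N).\]
The object $I_{int}\otimes B$ is an integrable quotient of $I\otimes B$, and it corepresents the same functor on integrable modules as $(I\otimes B)_{int}$. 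Hence the two coincide by Yoneda. Combining this with Step 1 and Proposition~\ref{prop:Adjoint}, we obtain the isomorphism for $\widetilde\Ind$. The coinduction case is handled dually, using $\Hom_{\U(\fg_1)}(\U(\fg_2),M\otimes B)\simeq\Hom_{\U(\fg_1)}(\U(\fg_2),M)\otimes B$ and maximal integrable submodules.

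\textbf{Step 3 (derived version).} Tensoring with $B$ is exact on both categories. It sends projectives $P(V)$ of Lemma~\ref{lem:Projectivity} to projectives, because $\Hom(P\otimes B,\bullet)\simeq\Hom(P,\bullet\otimes B^*)$ is exact. Therefore $P^\bullet\otimes B$ is a projective resolution of $M\otimes B$ whenever $P^\bullet$ is one for $M$. Applying the underived isomorphism termwise gives $\Lind(\bullet\otimes\Res B)\simeq\Lind(\bullet)\otimes B$. The argument for $\Rcoind$ is symmetric, using injectives.

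The main obstacle I expect is Step 1(3), the bijectivity of $\Phi$. It requires careful tracking of the signs in the super Hopf structure, and of the fact that $\fg_2/\fg_1$ is only finite dimensional on the level of the filtration. I would therefore rely on the PBW filtration argument: it only needs that $\operatorname{gr}\Phi$ is the identity, and this follows from $\Delta(x)=x\otimes1+1\otimes x$ for $x\in\fg_2$.
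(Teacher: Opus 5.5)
Your argument is correct, but it is routed differently from the paper's.

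The paper never looks at the non-integrable induced module. For $A\in\mathcal{O}(\fg_1)$ and $K\in\mathcal{O}(\fg_2)$ it writes the single chain
\[
\Hom\bigl(\widetilde\Ind(A\otimes\Res B),K\bigr)\simeq\Hom_{\fg_1}\bigl(A,\Res(K\otimes B^*)\bigr)\simeq\Hom\bigl(\widetilde\Ind A,K\otimes B^*\bigr)\simeq\Hom\bigl(\widetilde\Ind A\otimes B,K\bigr)
\]
and concludes by Yoneda. It uses only three inputs: the adjunction $\widetilde\Ind\dashv\Res$ of Proposition~\ref{prop:Adjoint}, the duality adjunction for $\bullet\otimes B$, and the fact that $\Res$ commutes with tensor products.

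Your Step~2 is the same kind of Yoneda argument, but you apply it one level lower, to pass from $\Ind$ to its maximal integrable quotient. That is why you need Step~1, the super Hopf tensor identity, as an extra input. For the lemma as stated that input is unnecessary. If you insert the adjunction $\widetilde\Ind\dashv\Res$ into your Step~2 chain, you get the result directly. The step you flag as the main obstacle (signs, antipode, PBW filtration) then disappears.

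What your route buys is a stronger and explicit statement. The isomorphism $\Phi$ already exists for the ordinary induced (and coinduced) modules, and it is compatible with passing to maximal integrable quotients (resp.\ submodules).

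Your Step~3 also fills in what the paper compresses into ``both isomorphisms extend to the derived functors''. Tensoring with $B$ is exact and preserves projectives, so $P^\bullet\otimes\Res B$ resolves $M\otimes\Res B$. Naturality of the underived isomorphism then gives the derived one.
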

	\begin{proof}
		For any $K \in \mathcal{O}(\fg_2)$, $A \in \mathcal{O}(\fg_1)$ we have the natural isomorphisms:
		\begin{multline*}
			\Hom_{\mathcal{O}(\fg_2)}(\widetilde \Ind_{\fg_1}^{\fg_2}(A\otimes \Res_{\fg_1}^{\fg_2}B),K)\simeq \Hom_{\mathcal{O}(\fg_1)}(A\otimes \Res_{\fg_1}^{\fg_2}B, \Res_{\fg_1}^{\fg_2}K)\simeq\Hom_{\mathcal{O}(\fg_1)}(A, \Res_{\fg_1}^{\fg_2}K \otimes (\Res_{\fg_1}^{\fg_2}B)^*)\simeq\\
			\simeq \Hom_{\mathcal{O}(\fg_1)}(A, \Res_{\fg_1}^{\fg_2}(K \otimes  B^*))\simeq
			\Hom_{\mathcal{O}(\fg_2)}(\widetilde \Ind_{\fg_1}^{\fg_2}A,K \otimes  B^*)\simeq     \Hom_{\mathcal{O}(\fg_2)}(\widetilde \Ind_{\fg_1}^{\fg_2}A\otimes  B,K ).
		\end{multline*}
		Therefore we get the needed isomorphism of functors. The same argument, starting from the adjunction of Proposition~\ref{prop:Adjoint} for the coinduction, gives
		$\widetilde\coInd_{\fg_1}^{\fg_2}(\bullet \otimes \Res_{\fg_1}^{\fg_2}B)\simeq \widetilde\coInd_{\fg_1}^{\fg_2}(\bullet)\otimes B$, and both isomorphisms extend to
		the derived functors.
	\end{proof}
	
	\begin{corollary}\label{cor:CommutationMultiplicationSymmetric}
		\[[\widetilde{\Ind}_{\fg_1}^{\fg_2}]([A \otimes \Res_{\fg_1}^{\fg_2}B])=[\widetilde{\Ind}_{\fg_1}^{\fg_2}](A)\cdot [B].\]   
		In words, the operators on the Grothendieck groups commute with the multiplication by a restricted module.
	\end{corollary}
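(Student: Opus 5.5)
The plan is to deduce the corollary directly from the preceding lemma by passing to Grothendieck groups. The one point needing care is that the lemma is stated for the underived functor, whereas the class $[\widetilde{\Ind}_{\fg_1}^{\fg_2}]$ is the alternating sum $\sum_i(-1)^i[\mathbf{L}^{(i)}\widetilde\Ind_{\fg_1}^{\fg_2}(\bullet)]$.

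\textbf{Step 1 (derived version of the lemma).} Take a projective resolution $P^\bullet\to A$ in $\mathcal{O}(\fg_1)$. Tensoring with the finite dimensional module $\Res_{\fg_1}^{\fg_2}B$ is exact, so $P^\bullet\otimes\Res_{\fg_1}^{\fg_2}B$ is a resolution of $A\otimes\Res_{\fg_1}^{\fg_2}B$.

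\textbf{Step 2 (projectives stay projective).} We must check that $P\otimes\Res_{\fg_1}^{\fg_2}B$ is again projective. This follows from the adjunction
\[\Hom(P\otimes\Res B,\bullet)\simeq\Hom(P,\bullet\otimes(\Res B)^*),\]
because the right-hand side is a composition of exact functors.

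\textbf{Step 3 (termwise comparison).} Applying the lemma termwise gives
\[\widetilde\Ind_{\fg_1}^{\fg_2}\bigl(P^\bullet\otimes\Res_{\fg_1}^{\fg_2}B\bigr)\simeq\widetilde\Ind_{\fg_1}^{\fg_2}(P^\bullet)\otimes B\]
as complexes. Naturality of the isomorphism in the lemma ensures it commutes with the differentials. Since $\bullet\otimes B$ is exact, it commutes with taking cohomology. We conclude
\[\mathbf{L}^{(i)}\widetilde\Ind_{\fg_1}^{\fg_2}\bigl(A\otimes\Res_{\fg_1}^{\fg_2}B\bigr)\simeq\mathbf{L}^{(i)}\widetilde\Ind_{\fg_1}^{\fg_2}(A)\otimes B\quad\text{for all } i.\]

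\textbf{Step 4 (passing to $K_0$).} It remains to note that $[M\otimes B]=[M]\cdot[B]$ in $K_0$. This holds because characters are multiplicative under tensor product, and $K_0$ is identified with the ring of characters via $\iota_J$. Taking the alternating sum over $i$ gives the stated identity. Here $A$ may be replaced by an arbitrary class, since both sides are additive.

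The only genuine obstacle is Step 2 together with the naturality used in Step 3, i.e.\ verifying that the lemma's isomorphism really lifts to the derived level. Everything else is formal.
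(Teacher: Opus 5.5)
Your proposal is correct and follows the paper's route. The paper gives no separate proof of the corollary: it reads it off from the preceding lemma, whose proof ends by asserting that the isomorphism extends to the derived functors, and then passes to classes using $[M\otimes B]=[M]\cdot[B]$. Your Steps 1--3 supply the justification of that derived extension, which the paper leaves unstated: tensoring a projective resolution of $A$ with $\Res_{\fg_1}^{\fg_2}B$ gives a projective resolution of $A\otimes\Res_{\fg_1}^{\fg_2}B$, and the lemma's natural isomorphism then identifies the two complexes termwise.
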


	% The following endofunctors of the categories $\mathcal{O}(\fg_1)$, $\mathcal{O}(\fg_1)^\vee$ are called {\it Zuckerman functors}.
	
	% 	\begin{definition}\label{def:GeneralZuckermann}
		% 		\[\mathcal{Z}_{\fg_1}^{\fg_2}:=\Res_{\fg_1}^{\fg_2}\widetilde \Ind_{\fg_1}^{\fg_2};\]
		% 		\[\widehat{\mathcal{Z}}_{\fg_1}^{\fg_2}:=\Res_{\fg_1}^{\fg_2}\widetilde \coInd_{\fg_1}^{\fg_2}.\]
		% 	\end{definition}
	
	% In this paper we will need the endofunctors of $\mathcal{O}(\fg_2)$, $\mathcal{O}(\fg_2)^\vee$ 

	% 	\begin{definition}\label{def:GeneralFZuckermann}
		% 		\[\mathcal{F}_{\fg_2}^{\fg_1}:=\widetilde \Ind_{\fg_1}^{\fg_2}\Res_{\fg_1}^{\fg_2};\]
		% 		\[\widehat{\mathcal{F}}_{\fg_1}^{\fg_2}:=\widetilde \coInd_{\fg_1}^{\fg_2}\Res_{\fg_1}^{\fg_2};\]
		% 	\end{definition}
	
	We finish this subsection with an elementary statement about the pair $\fb_{\fsl_2}\subset \fsl_2$, which is the only non-formal input used below: it is applied in Lemma~\ref{lem:FiniteHomologicalDimension} and in the proof of Theorem~\ref{thm:IsomorphismInductionCoinduction}.
	
	\begin{lemma}\label{lem:Sl2InductionCoinduction}
		Let $\fsl_2=\langle e,h,f\rangle$, let $\fb_{\fsl_2}=\langle h,e \rangle$ be its Borel subalgebra and let $\Bbbk_{\mu \varepsilon}$ be the one dimensional $\fb_{\fsl_2}$-module of weight $\mu \varepsilon$, $\mu \in \mathbb{Z}$. Then
		\begin{enumerate}
			\item $\mathbf{L}^{(m)}\widetilde\Ind_{\fb_{\fsl_2}}^{\fsl_2}=0$ and $\mathbf{R}^{(m)}\widetilde\coInd_{\fb_{\fsl_2}}^{\fsl_2}=0$ for $m \geq 2$,
			\item the non-vanishing values on the one dimensional modules are
			\[\mathbf{L}^{(0)}\widetilde\Ind(\Bbbk_{\mu \varepsilon})=V_{\mu \varepsilon}~(\mu \geq 0),\qquad \mathbf{L}^{(1)}\widetilde\Ind(\Bbbk_{\mu \varepsilon})=V_{(-\mu-2) \varepsilon}~(\mu \leq -2),\]
			\[\mathbf{R}^{(0)}\widetilde\coInd(\Bbbk_{\mu \varepsilon})=V_{-\mu \varepsilon}~(\mu \leq 0),\qquad \mathbf{R}^{(1)}\widetilde\coInd(\Bbbk_{\mu \varepsilon})=V_{(\mu-2) \varepsilon}~(\mu \geq 2),\]
			\item consequently
			\begin{equation}\label{eq:Sl2InductionCoinduction}
				\Rcoind_{\fb_{\fsl_2}}^{\fsl_2}\simeq \Lind_{\fb_{\fsl_2}}^{\fsl_2}\circ\bigl(V(-2\varepsilon)\otimes \bullet\bigr)[-1],
			\end{equation}
			where $V(-2\varepsilon)$ is the one dimensional module of weight $-2\varepsilon$.
		\end{enumerate}
	\end{lemma}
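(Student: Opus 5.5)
The plan is to compute everything explicitly through a BGG-type resolution of the trivial module. The key observation is that the category of integrable $\fb_{\fsl_2}$-modules is the category of weight $\fb_{\fsl_2}$-modules: $\fb_{\fsl_2}$ is solvable and its reductive part is $\fh$. Hence the projective cover of $\Bbbk_{\mu\varepsilon}$ is $P(\mu)=\U(\fb_{\fsl_2})\otimes_{\U(\fh)}\Bbbk_{\mu\varepsilon}=\Bbbk[e]\otimes \Bbbk_{\mu\varepsilon}$. It sits in the short exact sequence
\[0\rightarrow P(\mu+2)\xrightarrow{\ \cdot e\ } P(\mu)\rightarrow \Bbbk_{\mu\varepsilon}\rightarrow 0,\]
which is a projective resolution of length one. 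This immediately gives part (1) for induction; the dual argument, using injective hulls $\Bbbk[e]^\vee\otimes\Bbbk_{\mu\varepsilon}$ in the lower bounded category, gives it for coinduction. Every object is filtered by one dimensional modules, so the general vanishing follows from the long exact sequence.

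Next I compute $\widetilde\Ind$ on projectives. Since $\widetilde\Ind$ is left adjoint to $\Res$ and $P(\mu)$ is induced from $\fh$, we get $\widetilde\Ind_{\fb}^{\fsl_2}P(\mu)=\widetilde\Ind_{\fh}^{\fsl_2}\Bbbk_{\mu\varepsilon}$. By adjunction this represents the functor $N\mapsto N_{\mu}$, the weight-$\mu$ space of $N$, on integrable $\fsl_2$-modules. Hence it equals $\bigoplus_{\lambda\geq|\mu|,\ \lambda\equiv\mu}V_{\lambda\varepsilon}$. The map induced by $\cdot e$ sends the summand $V_\lambda$ of $\widetilde\Ind P(\mu+2)$ into the summand $V_\lambda$ of $\widetilde\Ind P(\mu)$. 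On each $V_\lambda$ it is the map $e:(V_\lambda)_{\mu}\to(V_\lambda)_{\mu+2}$, up to duality of the representing functors, and this map is nonzero whenever both weights occur. Therefore the two-term complex is an isomorphism on all common summands. The leftover summands are $V_\mu$ in the cokernel for $\mu\geq0$, and $V_{-\mu-2}$ in the kernel for $\mu\leq-2$; for $\mu=-1$ nothing is left over. This gives $\mathbf{L}^{(0)}$ and $\mathbf{L}^{(1)}$. The coinduction formulas follow by the mirror argument with injective resolutions, or by applying the duality $M\mapsto M^\vee$ twisted by the Chevalley involution, which interchanges $\widetilde\Ind$ and $\widetilde\coInd$ and sends $\mu\mapsto-\mu$.

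For part (3), the values in (2) show that $\Rcoind(\Bbbk_{\mu\varepsilon})$ and $\Lind(\Bbbk_{(\mu-2)\varepsilon})[-1]$ have the same cohomology:
\begin{itemize}
\item for $\mu\leq0$, each is $V_{-\mu}$ in degree $0$;
\item for $\mu\geq2$, each is $V_{\mu-2}$ in degree $1$;
\item for $\mu=1$, each is zero.
\end{itemize}
To upgrade this to an isomorphism of functors, I would construct a natural transformation and check it is a quasi-isomorphism on one dimensional modules; since every object has a finite filtration by these modules in each weight, this suffices.

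The main obstacle is producing that natural transformation rather than just matching cohomology objectwise. I would try Serre-type duality for $\fb\subset\fsl_2$: the relative canonical character is $\Lambda^{top}(\fsl_2/\fb)^*=\Bbbk_{-2\varepsilon}$ up to sign. Concretely, both derived functors can be computed by the complexes $\U(\fsl_2)\otimes_{\U(\fb)}(\Lambda^\bullet(\fsl_2/\fb)\otimes M)$ and $\Hom_{\U(\fb)}(\U(\fsl_2),\Lambda^\bullet(\fsl_2/\fb)^*\otimes M)$, after taking integrable parts. Here $\fsl_2/\fb$ is one dimensional, so both complexes are two-term. The perfect pairing $\fsl_2/\fb\otimes\Lambda^{1}(\fsl_2/\fb)^*\rightarrow\Bbbk$ then identifies them termwise after twisting by $\Bbbk_{-2\varepsilon}$ and shifting by $[-1]$. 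This identification is natural in $M$.
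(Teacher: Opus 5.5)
Your treatment of (1)--(2) takes a different route from the paper. The paper simply invokes Bott--Borel--Weil for $\mathbb{P}^1$ and checks characters; you compute directly, and your kernel/cokernel bookkeeping on $\bigoplus_{\lambda}V_{\lambda\varepsilon}$ is correct and more self-contained. The problem is part (3). You are right that matching cohomology objectwise, which is all the paper's proof does, does not give an isomorphism of functors. However, the construction you propose to fill this in fails.

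The complex $\U(\fsl_2)\otimes_{\U(\fb)}\bigl(\Lambda^\bullet(\fsl_2/\fb)\otimes M\bigr)$ has no natural differential when $M$ is only a $\fb$-module. The relative Koszul complex is a complex resolving $M$ only when $M$ extends to an $\fsl_2$-module. Concretely, for $M=\Bbbk_{\mu\varepsilon}$ one has $\fsl_2/\fb\simeq\Bbbk_{-2\varepsilon}$ as a $\fb$-module, so the two terms are the Verma modules $M(\mu-2)$ and $M(\mu)$. But $\Hom(M(\mu-2),M(\mu))=0$ unless $\mu=0$. For $\mu\geq 2$, taking integrable quotients then gives $V_{(\mu-2)\varepsilon}\xrightarrow{0}V_{\mu\varepsilon}$. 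This produces a spurious class in degree $-1$, contradicting $\mathbf{L}^{(1)}\widetilde\Ind(\Bbbk_{\mu\varepsilon})=0$ from (2). The coinduced complex fails in the same way for $\mu\leq -2$. Nor can the two complexes be matched termwise: one side consists of highest weight (Verma-type) modules and the other of lowest weight modules, so no pairing on $\fsl_2/\fb$ can identify them.

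The fix is to put the duality on $\fb/\fh=\Bbbk e$ rather than on $\fsl_2/\fb$. Use the relative resolutions you already have, written functorially:
\[0\rightarrow \U(\fb)\otimes_{\U(\fh)}(\fb/\fh\otimes M)\rightarrow \U(\fb)\otimes_{\U(\fh)}M\rightarrow M\rightarrow 0,\qquad 0\rightarrow M\rightarrow \Hom_{\U(\fh)}(\U(\fb),M)\rightarrow \Hom_{\U(\fh)}\bigl(\U(\fb),(\fb/\fh)^*\otimes M\bigr)\rightarrow 0.\]
These also give (1) for every $M$ at once, with no filtration argument.

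By transitivity, $\widetilde\Ind_{\fb}^{\fsl_2}$ and $\widetilde\coInd_{\fb}^{\fsl_2}$ turn the resolving terms into $\widetilde\Ind_{\fh}^{\fsl_2}$ and $\widetilde\coInd_{\fh}^{\fsl_2}$ of $\fh$-modules. These two functors agree: both are $W\mapsto\bigoplus_\lambda V_{\lambda\varepsilon}\otimes\Hom_{\fh}(V_{\lambda\varepsilon},W)$, with the same caveats as your representability argument. Since $\fb/\fh\simeq\Bbbk_{2\varepsilon}$ and $(\fb/\fh)^*\simeq\Bbbk_{-2\varepsilon}$, both $\Rcoind(M)$ and $\Lind(\Bbbk_{-2\varepsilon}\otimes M)[-1]$ become the same two-term complex
\[\widetilde\Ind_{\fh}^{\fsl_2}M\rightarrow\widetilde\Ind_{\fh}^{\fsl_2}(\Bbbk_{-2\varepsilon}\otimes M)\]
in degrees $0,1$, naturally in $M$. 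In both cases the differential is induced by $e$, and you should check that the two differentials agree up to sign. This gives \eqref{eq:Sl2InductionCoinduction} as an isomorphism of functors.

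Some smaller points, which the paper's framework glosses over as well:
\begin{enumerate}
\item $P(\mu)$ has weights $\mu,\mu+2,\dots$, so it is not an object of $\mathcal{O}(\fb)$.
\item Your injective hulls $\Bbbk[e]^\vee\otimes\Bbbk_{\mu\varepsilon}$ have weights bounded \emph{above}, not below.
\item $\bigoplus_\lambda V_{\lambda\varepsilon}$ represents $N\mapsto N_\mu$ only on modules with finitely many isotypic components, because $\Hom$ out of a direct sum is a product.
\item The plain restricted dual already exchanges $\widetilde\Ind$ and $\widetilde\coInd$ with $\mu\mapsto-\mu$. Twisting by the Chevalley involution would instead turn $\fb$-modules into $\fb^-$-modules.
\end{enumerate}
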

	\begin{proof}
		Claims (1) and (2) are the Bott--Borel--Weil theorem for $\mathbb{P}^1$. On the level of the Grothendieck groups they are equivalent to
		\[[\widetilde\Ind_{\fb_{\fsl_2}}^{\fsl_2}]=(1-X^{-2})^{-1}(1-X^{-2}s),\qquad [\widetilde\coInd_{\fb_{\fsl_2}}^{\fsl_2}]=(1-X^{2})^{-1}(1-X^{2}s),\]
		the two expressions being obtained from one another by $X \mapsto X^{-1}$. Applying the first one to $X^{\mu}$ gives $\ch V_{\mu \varepsilon}$ for $\mu \geq 0$, zero for $\mu=-1$ and $-\ch V_{(-\mu-2)\varepsilon}$ for $\mu \leq -2$. Applying the second one gives $\ch V_{-\mu \varepsilon}$ for $\mu \leq 0$, zero for $\mu = 1$ and $-\ch V_{(\mu-2)\varepsilon}$ for $\mu \geq 2$.
		
		Comparing the two lines of (2) we get $\mathbf{R}^{(i)}\widetilde\coInd(\Bbbk_{\mu \varepsilon})\simeq \mathbf{L}^{(1-i)}\widetilde\Ind(\Bbbk_{(\mu-2) \varepsilon})$ for $i=0,1$. Since $\mathbf{L}^{(m)}$ sits in cohomological degree $-m$ and $\mathbf{R}^{(m)}$ in cohomological degree $m$, the passage from $1-i$ to $i$ is the shift by $[-1]$, and the twist $\mu \mapsto \mu-2$ is the tensoring by $V(-2\varepsilon)$. This is \eqref{eq:Sl2InductionCoinduction}.
	\end{proof}
	
	\subsection{Twist of a module by an automorphism}
	We need to introduce one more family of functors. Let $\fg$ be a Lie superalgebra and $\psi$ be an automorphism of $\fg$. Let $M$ be an $\fg$-module. The following definition is standard, see for example
	\begin{definition}\label{def:TwistedModule}
		$M^\psi$ is an $\fg$-module which is isomorphic to $M$ as a vector space and the action of $\fg$ is the following:
		\[X m^\psi:=(\psi(X)m)^\psi, ~\text{for }X \in \fg, m \in M.\]
	\end{definition}
	
	Then the map $\psi: M \mapsto M^{\psi}$ is an exact endofunctor of $\mathcal{O}(\fg)$.
	
	\subsection{Functors for the parabolic subalgebras of \texorpdfstring{$\Daff$}{Daff}}
	
	In this subsection we study the induction-restriction functors in the case of parabolic subalgebras of the Lie superalgebra $\Daff$. Consider the Lie superalgebras $\fp_{ij}\subset \fp_{123}$. Consider first the restriction functor
	$\Res_{\fp_{ij}}^{\fp_{123}}$. We first compute the action of this functor on the Grothendieck groups. Recall first that for $\{i,k\}=\{1,2,3\}\backslash\{j\}$
	\[K_0(\mathcal{O}^{gr}(\fp_{ik}))\simeq\mathbb{Z}[X_1^{\pm 1},X_2^{\pm 1},X_3^{\pm 1},t^{\pm 1}]^{s_j}((q^{\frac{1}{2}})),~K_0(\mathcal{O}^{gr}(\fp_{123}))\simeq\mathbb{Z}[X_1^{\pm 1},X_2^{\pm 1},X_3^{\pm 1},t^{\pm 1}]^{s_1,s_2,s_3}((q^{\frac{1}{2}})).\]  
	
	We use these isomorphisms to identify the Grothendieck groups with the corresponding subgroups of the rings of Laurent polynomials. So we get that the derived functors act on these rings of polynomials. In this subsection we explain these actions. 
	
	Note first that we have the natural inclusions
	\[[\Res_{\fp_{ik}}^{\fp_{123}}]:\mathbb{Z}[X_1^{\pm 1},X_2^{\pm 1},X_3^{\pm 1},t^{\pm 1}]^{s_1,s_2,s_3}((q^{\frac{1}{2}}))\hookrightarrow \mathbb{Z}[X_1^{\pm 1},X_2^{\pm 1},X_3^{\pm 1},t^{\pm 1}]^{s_j}((q^{\frac{1}{2}}))\]
	realizes the action of the restriction functor on the Grothendieck group. 
	
	Recall next that by Corollary~\ref{cor:ParabolicStable}
	the subalgebra $\fp_{ij}$ is stable under the automorphism $\pi_{(i,j)(k,0)}$, where $\{k\}=\{1,2,3\}\backslash \{i,j\}$. 
	\begin{lemma}
		The operator $[\pi_{(i,j)(k,0)}]$ acts on the ring of the Laurent polynomials by the following substitution:
		\[X_k \mapsto X_k, X_i \mapsto q^{\frac{1}{2}}X_i^{-1}, X_j \mapsto q^{\frac{1}{2}}X_j^{-1}.\]
	\end{lemma}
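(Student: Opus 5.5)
The plan is to compute how the twist changes the weight and grading of each vector. Then translate this into a substitution on characters, using the identification $\iota_{ij}$ of $K_0(\mathcal{O}^{gr}(\fp_{ij}))$ with $s_k$-invariant Laurent series. Since twisting is exact (Definition~\ref{def:TwistedModule}), $[\pi_{(i,j)(k,0)}]$ is well defined on $K_0$. It is determined by its effect on $\ch$, so it suffices to describe $\ch(M^{\pi})$ in terms of $\ch(M)$.

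\emph{Step 1: compute the action on weights and degrees.} For $m\in M$ of weight $\lambda$ with respect to $\fh$, $z$-degree $a$ and odd degree $r$, I will determine the trigrading of $m^\pi$. By definition $h\,m^\pi=(\pi(h)m)^\pi$. Using the explicit formula $\pi(h)=\overline{\pi}(h)+\lambda_\sigma(h)K$ from the proof of the Proposition on diagrammatic automorphisms, the $\fh$-weight of $m^\pi$ is $\lambda\circ\overline{\pi}$, up to a correction by $K$. The $z$-grading on $M^\pi$ has to be redefined so that $M^\pi$ is graded, since $\pi$ shifts $z$-degrees of root vectors. The cleanest way is to work with the full affine weight $\lambda+a\delta$. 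Then $\pi$ acts on $\fh^*\oplus\Bbbk\delta$ through the map on roots $\alpha_l\mapsto\alpha_{\sigma(l)}$, which fixes $\delta$. A vector of affine weight $\mu$ in $M$ becomes a vector of affine weight $\pi^{-1}(\mu)$ in $M^\pi$. Here $\pi=\pi_{(i,j)(k,0)}$ is an involution, so $\pi^{-1}=\pi$. The odd grading is unchanged, because $\pi$ sends odd root vectors to odd root vectors and so preserves the $\mathbb{Z}$-grading by odd degree. Hence $t$ is untouched.

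\emph{Step 2: compute $\pi$ on $\varepsilon_1,\varepsilon_2,\varepsilon_3$.} Using $X_k=X^{(\alpha_i+\alpha_j)/2}$ and $\alpha_0=\delta-\alpha_1-\alpha_2-\alpha_3$:
\begin{itemize}
\item $\varepsilon_k=(\alpha_i+\alpha_j)/2\mapsto(\alpha_j+\alpha_i)/2=\varepsilon_k$;
\item $\varepsilon_i=(\alpha_j+\alpha_k)/2\mapsto(\alpha_i+\alpha_0)/2=\delta/2-(\alpha_j+\alpha_k)/2=\delta/2-\varepsilon_i$, and symmetrically $\varepsilon_j\mapsto\delta/2-\varepsilon_j$;
\item $\delta\mapsto\delta$.
\end{itemize}
Passing to exponentials gives $X_k\mapsto X_k$, $X_i\mapsto q^{1/2}X_i^{-1}$, $X_j\mapsto q^{1/2}X_j^{-1}$, $q\mapsto q$, $t\mapsto t$. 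This is the claimed substitution. It is compatible with the target ring because it preserves $s_k$-invariance, and half-integer powers of $q$ are allowed by the completions $((q^{1/2}))$.

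\emph{Main obstacle.} The delicate point is the precise bookkeeping in Step 1. First, one must check that the $K$-correction in $\pi(h)$ does not shift the finite weight. The simplest route is to argue that $K$ acts by zero on the objects under consideration, which are restrictions from $\fp_{123}$ where $K$ is a central direct summand. Alternatively, one can absorb the correction into the convention that the $z$-grading of $M^\pi$ is read off from the $\delta$-component. Second, one must fix the direction of the twist, i.e.\ whether $\pi$ or $\pi^{-1}$ acts on weights; since $\pi$ is an involution this does not affect the answer. Finally, as a sanity check I will verify the formula on a generator: the module $\Bbbk e_{\alpha_k}$-type weight $X^{\alpha_k}\mapsto X^{\alpha_0}=qX^{-\theta}$, which agrees with the substitution.
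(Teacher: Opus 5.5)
Your proof is correct. Its core computation is the paper's: $\pi$ permutes $\alpha_0,\dots,\alpha_3$ and fixes $\delta$, and $\varepsilon_i=(\alpha_j+\alpha_k)/2$ with $\alpha_0=\delta-\theta$.

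What differs is the packaging. The paper argues that twisting commutes with tensor products, so $[\pi]$ is a ring automorphism. It then prescribes the exchanges $X^{\alpha_i}\leftrightarrow X^{\alpha_j}$ and $X^{\alpha_k}\leftrightarrow X^{\alpha_0}$ and solves for the variables. You instead track the trigrading of each vector of $M^\pi$ directly.

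The paper's route is shorter, but it leaves something open. The values on the $X^{\alpha_l}$ only fix $[\pi]$ on $X^\gamma$ for $\gamma$ in the root lattice, which has index $4$ in $\mathbb{Z}^3$. A ring automorphism with these values could still differ from the claimed substitution by signs, for example $X_i\mapsto -q^{1/2}X_i^{-1}$ and $X_j\mapsto -q^{1/2}X_j^{-1}$. Your vector-wise computation has three advantages:
\begin{itemize}
\item it rules out such signs automatically;
\item it shows directly that $q$ and $t$ are untouched;
\item it makes explicit the half-integral regrading of $z$-degrees on $M^\pi$, which the paper leaves implicit.
\end{itemize}

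One correction concerns your treatment of $K$. Being a central direct summand of $\fp_{123}$ (or of $\fp_{ij}$) does not force $K$ to act by zero. Your alternative fix does not work either. At level $c$ the $\fh$-weight of $m^\pi$ is $\lambda\circ\overline{\pi}+c\lambda_\sigma$, where $\lambda_\sigma\neq 0$ for $\sigma=(i\,j)(k\,0)$ (take $\gamma=2\varepsilon_i$, for which $n_\gamma=1$). This shift lies in $\fh^*$, so it cannot be absorbed into the $z$-grading. The correct justification is the paper's standing level-zero convention, stated in Subsection~\ref{ssec:ProjectiveModules}. That convention is also what is needed for the twist to be an endofunctor at all, since otherwise $\fh$ need not act semisimply on $M^\pi$.
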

	\begin{proof}
		The twists over the automorphisms of a Lie superalgebra preserve the tensor products, i.e.\ for $M,N \in \mathcal{O}^{gr}(\fp_{ij})$ we have
		\[\pi_{(i,j)(k,0)}(M) \otimes \pi_{(i,j)(k,0)}(N)\simeq \pi_{(i,j)(k,0)}(M\otimes N).\]
		Therefore $[\pi_{(i,j)(k,0)}]$ is an automorphism of the polynomial ring. Moreover, $\pi_{(i,j)(k,0)}$ is an involutive automorphism and
		\[[\pi_{(i,j)(k,0)}](X^{\alpha_i})=X^{\alpha_j},~[\pi_{(i,j)(k,0)}](X^{\alpha_k})=X^{\alpha_0}.\]
		In other words,
		\begin{equation*}
			X_kX_iX_j^{-1} \longleftrightarrow  X_kX_i^{-1}X_j,~  X_k^{-1} X_iX_j\longleftrightarrow  qX_k^{-1} X_i^{-1}X_j^{-1}.
		\end{equation*}
		This allows us to compute the action of the operator on the variables.
	\end{proof}
	
	Now let us compute the action of the induction functor 
	$[\widetilde{\Ind}_{\fp_{ij}}^{\fp_{123}}]$ on the Grothendieck group.
	\begin{lemma}\label{lem:InductionOperator}
		\[[\widetilde{\Ind}_{\fp_{ij}}^{\fp_{123}}]=(1-X_{i}^{-2})^{-1}(1-X_{i}^{-2}s_i)(1-X_{j}^{-2})^{-1}(1-X_{j}^{-2}s_j)(1-tX^{-\alpha_k})(1-tX^{-\alpha_1-\alpha_2-\alpha_3}).\]
	\end{lemma}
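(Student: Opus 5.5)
The formula should factor as two steps: an exact induction that contributes the two exterior factors, followed by an induction between the reductive parts that contributes the two $\fsl_2$ Bott--Borel--Weil operators of Lemma~\ref{lem:Sl2InductionCoinduction}. To do this, first compare $\fp_{123}$ and $\fp_{ij}$ root by root. Since $\alpha_i+\alpha_j=2\varepsilon_k$, Definition~\ref{def:parabolicRoots} shows that $\fp_{ij}$ contains, in $z$-degree $0$, the negative roots $-\alpha_i,-\alpha_j,-2\varepsilon_k$ and no other negative roots. Hence
\[\fp_{123}/\fp_{ij}\simeq \spn\langle f_i,f_j\rangle\oplus\spn\langle e_{-\alpha_k},e_{-\theta}\rangle,\]
with even part of weights $-2\varepsilon_i,-2\varepsilon_j$ and odd part of weights $-\alpha_k,-\theta$.

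I would insert the intermediate algebra
\[\mathfrak q:=\fp_{ij}\oplus\spn\langle e_{-\alpha_k},e_{-\theta}\rangle .\]
It is a subalgebra: odd elements have zero bracket with each other; $[f_k,e_{-\alpha_k}]\in\Bbbk e_{-\theta}$ and $[e_k,e_{-\theta}]\in\Bbbk e_{-\alpha_k}$; and all odd elements times $z^{\geq1}$ already lie in $\fp_{ij}$. Moreover $\mathfrak q^{red}=\fp_{ij}^{red}$, and $\mathfrak q=\mathfrak q^{red}\ltimes R$, where $R$ is the radical of $\fp_{123}$, namely $\Ddeg\otimes z\Bbbk[z]\oplus\Ddeg_{\overline1}\oplus\Bbbk K$. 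Transitivity of the adjunctions in Proposition~\ref{prop:Adjoint} gives
\[\Lind_{\fp_{ij}}^{\fp_{123}}\simeq\Lind_{\mathfrak q}^{\fp_{123}}\circ\Lind_{\fp_{ij}}^{\mathfrak q}.\]
\begin{itemize}
\item \emph{The step $\fp_{ij}\subset\mathfrak q$.} The reductive parts coincide, so integrability is automatic and $\widetilde\Ind=\Ind$. This functor is exact because $\U(\mathfrak q)$ is free over $\U(\fp_{ij})$. By PBW, as graded $\fh$-modules $\Ind M\simeq\Lambda\bigl(\spn\langle e_{-\alpha_k},e_{-\theta}\rangle\bigr)\otimes M$. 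By the sign convention of Remark~\ref{rem:SignConvention}, this is multiplication by $(1-tX^{-\alpha_k})(1-tX^{-\theta})$.
\item \emph{The step $\mathfrak q\subset\fp_{123}$.} Both algebras have the form (reductive)$\ltimes R$ with the same $R$. Here $\mathfrak q^{red}=\fh+\Bbbk e_i+\Bbbk e_j+\fsl_2^{(k)}$ is a parabolic in $\fp_{123}^{red}=\fsl_2^{\oplus3}$. I would compute on the projective generators $P_{\mathfrak q}(V)=\U(\mathfrak q)\otimes_{\U(\mathfrak q^{red})}V$ of Lemma~\ref{lem:Projectivity}:
\[\widetilde\Ind_{\mathfrak q}^{\fp_{123}}P_{\mathfrak q}(V)\simeq \U(\fp_{123})\otimes_{\U(\fp_{123}^{red})}\widetilde\Ind_{\mathfrak q^{red}}^{\fp_{123}^{red}}V,\]
and the analogous statement for the higher derived functors. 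This reduces the step to the pair $\fb_{\fsl_2}^{\oplus2}\subset\fsl_2^{\oplus2}$ in the factors $i,j$. On that pair, Lemma~\ref{lem:Sl2InductionCoinduction} (Künneth over the two factors) gives the operator $\prod_{m\in\{i,j\}}(1-X_m^{-2})^{-1}(1-X_m^{-2}s_m)$. The character of $\U(R)$ is $W$-invariant, since $R$ is a $\fp_{123}^{red}$-module. Therefore multiplication by $\ch\U(R)$ commutes with these Demazure-type operators, and the class of the derived induction is the stated operator on the classes of projectives.
\end{itemize}
Composing the two steps gives the lemma. The exterior factors stand on the right, i.e.\ they are applied first, which matches the order of the formula.

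The main obstacle is the second step. The displayed isomorphism for projectives needs care in two places.
\begin{itemize}
\item \emph{Derived functors.} One must check that the maximal integrable quotient of $\U(\fp_{123})\otimes_{\U(\mathfrak q)}P_{\mathfrak q}(V)\simeq\U(\fp_{123})\otimes_{\U(\mathfrak q^{red})}V$ is computed by the reductive part alone. The argument is that $\U(\fp_{123})$ is free over $\U(\fp_{123}^{red})$ and $(\bullet)_{int}$ is determined by $\fp_{123}^{red}$. For the derived version I would resolve $V$ by relative (Koszul-type) resolutions over $\fp_{123}^{red}$ and induce them up along the exact functor $\U(\fp_{123})\otimes_{\U(\fp_{123}^{red})}\bullet$.
\item \emph{Passing to the whole Grothendieck group.} The classes of the $P_{\mathfrak q}(V)\sh{l}{r}$ span $K_0$ only topologically. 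Since $R$ has positive $q,t$-degree, both sides are continuous in the $q$-adic topology and finite in each $z$-degree, so agreement on projectives gives the equality of operators.
\end{itemize}
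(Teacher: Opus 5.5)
\paragraph{Comparison with the paper.} Your factorization through $\mathfrak q$ (which is the paper's $\fp_{ij}^+$) and your treatment of the exact odd step coincide with the paper. The even step differs. The paper works with the irreducibles $V_{ij}(\mu)$, $\mu_i,\mu_j\geq 2$, and asserts that the composition factors of $\Ind_{\fp_{ij}}^{\fp_{ij}^+}V_{ij}(\mu)$ are acyclic. It then reaches all $\mu$ by downward induction, using that $[\widetilde{\Ind}_{\fp_{ij}}^{\fp_{123}}]-F_{ij}$ commutes with multiplication by $X_m+X_m^{-1}$ (Corollary~\ref{cor:CommutationMultiplicationSymmetric}). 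You instead work with modules induced from $\mathfrak q^{red}$. There your reduction of the derived functor to Lemma~\ref{lem:Sl2InductionCoinduction} is better justified than the paper's acyclicity assertion, which is stated without argument.

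Note, however, that your $\mathfrak q^{red}$ contains $e_i,e_j$ and is not reductive. So $P_{\mathfrak q}(V)$ is not projective and Lemma~\ref{lem:Projectivity} does not apply; for $\mu_i\le -2$ its first derived induction is nonzero. It is only relatively projective. What computes $\mathbf{L}\widetilde{\Ind}$ on it is a projective resolution of $V$ over $\mathfrak q^{red}$ (not over $\fp_{123}^{red}$), induced along the exact functor $\U(\mathfrak q)\otimes_{\U(\mathfrak q^{red})}\bullet$.

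\paragraph{The gap: the final density step fails as stated.} Because $R$ contains the odd part $\Ddeg_{\overline 1}$ in $z$-degree $0$, one has $\ch\U(R)=\Pi\cdot C$. Here $\Pi:=\prod_{a,b,c\in\{\pm1\}}(1-tX_1^aX_2^bX_3^c)$ and $C:=\ch\U(\Ddeg\otimes z\Bbbk[z])\in 1+q\,(\cdots)$. Since $\Pi$ does not involve $q$, every $q$-coefficient of every $\ch\U(R)\cdot g$ is divisible by $\Pi$ in $\mathbb{Z}[X_1^{\pm1},X_2^{\pm1},X_3^{\pm1},t^{\pm1}]$, and $\Pi$ is not a unit there. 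So the class $1$ of the trivial $\mathfrak q$-module, for instance, is not a $q$-adic limit of combinations of your classes. Finiteness in each $z$-degree does not help in $z$-degree $0$. Reaching all of $K_0$ this way would need a $t$-adic completion and a continuity statement that you do not prove.

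\paragraph{A repair using the paper's own tools.} Put $E:=[\widetilde{\Ind}_{\mathfrak q}^{\fp_{123}}]-\prod_{m\in\{i,j\}}(1-X_m^{-2})^{-1}(1-X_m^{-2}s_m)$.
\begin{enumerate}
\item $C$ is invertible and $W$-invariant, so $\Pi h=\ch\U(R)\cdot C^{-1}h$, where $C^{-1}h$ is a $q$-adically convergent combination of classes of $\mathfrak q^{red}$-irreducibles. Your computation then gives $E(\Pi h)=0$ for every $h$.
\item $\Pi$ is the class of the finite dimensional $\fp_{123}$-module $\U(\Ddeg)\otimes_{\U(\fp_{123}^{red})}\Bbbk\simeq\Lambda(\Ddeg_{\overline 1})$, pulled back along $z\mapsto 0$. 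Corollary~\ref{cor:CommutationMultiplicationSymmetric} and the $W$-invariance of $\Pi$ therefore give $E(\Pi h)=\Pi\,E(h)$.
\item $\Pi$ is a non-zero-divisor in $\mathbb{Z}[X_1^{\pm1},X_2^{\pm1},X_3^{\pm1},t^{\pm1}]((q^{\frac12}))$, so $E=0$.
\end{enumerate}
With this, your composition argument goes through.
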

	\begin{proof}
		Let us denote by $F_{ij}$ the operator 
		\begin{equation}\label{eq:FDefinition}
			F_{ij}:=(1-X_{i}^{-2})^{-1}(1-X_{i}^{-2}s_i)(1-X_{j}^{-2})^{-1}(1-X_{j}^{-2}s_j)(1-tX^{-\alpha_k})(1-tX^{-\alpha_1-\alpha_2-\alpha_3}).
		\end{equation}
		Then for $f \in \mathbb{Z}[X_1^{\pm 1},X_2^{\pm 1},X_3^{\pm 1},q^{\pm \frac{1}{2}},t^{\pm 1}]^{s_1,s_2,s_3}$ the operator $F_{ij}$ commutes with the multiplication by $f$:
		\[F_{ij} f =f F_{ij}.\]
		Moreover by Corollary~\ref{cor:CommutationMultiplicationSymmetric} one has
		the same property for the induction operator:
		\begin{equation*}
			[\widetilde{\Ind}_{\fp_{ij}}^{\fp_{123}}]f =f[\widetilde{\Ind}_{\fp_{ij}}^{\fp_{123}}].
		\end{equation*}
		
		Now let us compute both operators on the irreducible modules $V_{ij}(\mu)$ for $\mu_i,\mu_j\geq 2$. By skew-linearity it is enough to treat the modules without shifts. On the one hand by direct computation we get
		\begin{multline*}
			F_{ij}\left(X_i^{\mu_i}X_j^{\mu_j} \frac{X_k^{\mu_k}-X_k^{-\mu_k-2}}{1-X_k^{-2}}\right)\\=\ch(V_{123}(\mu))-t\ch(V_{123}(\mu-\alpha_k))-t\ch(V_{123}(\mu-\alpha_1-\alpha_2-\alpha_3))+t^2\ch(V_{123}(\mu-2\varepsilon_i-2\varepsilon_j)).
		\end{multline*}
		
		On the other hand let us compute the image of the induction functor $\LInd_{\fp_{ij}}^{\fp_{123}}$ on the irreducible module $V_{ij}(\mu)$.
		
		Consider the intermediate Lie superalgebra $\fp_{ij}^+:=\fp_{ij}\oplus \langle  e_{- \alpha_k}, e_{-\alpha_1-\alpha_2-\alpha_3}\rangle$, $\fp_{ij}\subset \fp_{ij}^+ \subset \fp_{123}$. Then by definition we have the following decomposition of the induction functor:
		
		\begin{equation}\label{eq:CompositionInductions}
			\widetilde{\Ind}_{\fp_{ij}}^{\fp_{123}}= \widetilde{\Ind}_{\fp_{ij}^+}^{\fp_{123}}\circ \Ind^{{\fp_{ij}^+}}_{\fp_{ij}}.
		\end{equation}
		
		The functor $ \Ind^{{\fp_{ij}^+}}_{\fp_{ij}}$ is exact. Therefore \eqref{eq:CompositionInductions} can be extended to the derived functors:
		\begin{equation*}\label{eq:DerivedCompositionInductions}
			\LInd_{\fp_{ij}}^{\fp_{123}}= \LInd_{\fp_{ij}^+}^{\fp_{123}}\circ \Ind^{{\fp_{ij}^+}}_{\fp_{ij}}.
		\end{equation*}
		
		Consider now an irreducible $\fp_{ij}$ module $V_{ij}(\mu)$. By definition of the induced module we have
		\begin{multline}\label{eq:FirstInductionCharacter}
			\ch  \Ind^{{\fp_{ij}^+}}_{\fp_{ij}}V_{ij}(\mu)= (1-tX^{-\alpha_k})(1-tX^{-\alpha_1-\alpha_2-\alpha_3})\ch V_{ij}(\mu)\\=(1-tX_i^{-1}X_j^{-1}X_k)(1-tX_1^{-1}X_2^{-1}X_3^{-1})\ch  V_{ij}(\mu).
		\end{multline}
		
		Assume now $\mu_i, \mu_j \geq 2$. The two generators added to $\fp_{ij}$ have weights $-\alpha_k$ and $-\alpha_1-\alpha_2-\alpha_3$, and $\alpha_k+\alpha_1+\alpha_2+\alpha_3=2\varepsilon_i+2\varepsilon_j$. Therefore the composition series of the module $\Ind^{{\fp_{ij}^+}}_{\fp_{ij}}V_{ij}(\mu)$ consists of four modules $V_{ij}(\mu)$, $V_{ij}(\mu-\alpha_k)\sh{0}{1}$, $V_{ij}(\mu- \alpha_1-\alpha_2-\alpha_3)\sh{0}{1}$, $V_{ij}(\mu-2\varepsilon_i-2\varepsilon_j)\sh{0}{2}$, in agreement with \eqref{eq:FirstInductionCharacter}. Since $\alpha_k$ and $\alpha_1+\alpha_2+\alpha_3$ both have coefficient $1$ at $\varepsilon_i$ and at $\varepsilon_j$, all these modules have dominant weights with respect to $h_i, h_j$. Therefore they are acyclic with respect to $\widetilde{\Ind}_{\fp_{ij}^+}^{\fp_{123}}$, therefore 
		\[\mathbf{L}^{>0}\widetilde{\Ind}_{\fp_{ij}^+}^{\fp_{123}}(\Ind^{{\fp_{ij}^+}}_{\fp_{ij}}V_{ij}(\mu))=0.\]
		However
		\[\widetilde{\Ind}_{\fp_{ij}^+}^{\fp_{123}}V_{ij}(\mu)=V_{123}(\mu)\]
		and by \eqref{eq:character3Irreducible} we have
		\[\ch \widetilde{\Ind}_{\fp_{ij}^+}^{\fp_{123}}V_{ij}(\mu)=(1-X_{i}^{-2})^{-1}(1-X_{i}^{-2}s_i)(1-X_{j}^{-2})^{-1}(1-X_{j}^{-2}s_j)\ch V_{ij}(\mu).\]
		
		Together with \eqref{eq:FirstInductionCharacter} this proves Lemma~\ref{lem:InductionOperator} for the irreducible representations $V_{ij}(\mu)$, $\mu_i,\mu_j \geq 2$. In other words,
		\[([\widetilde{\Ind}_{\fp_{ij}}^{\fp_{123}}]-F_{ij})\ch V_{ij}(\mu)=0.\]
		Moreover the operator $[\widetilde{\Ind}_{\fp_{ij}}^{\fp_{123}}]-F_{ij}$ commutes with the multiplication by the character of a representation of $\fp_{123}$. In particular, it commutes with $X_i+X_i^{-1}$ and $X_j+X_j^{-1}$. Note that 
		\[\ch V_{ij}(\mu+ \varepsilon_i)=\ch V_{ij}(\mu)X_i.\]
		Assume by induction that 
		\[([\widetilde{\Ind}_{\fp_{ij}}^{\fp_{123}}]-F_{ij})\ch V_{ij}(\mu)=0, \mu_i \geq \nu_i, \mu_j \geq \nu_j.\]
		Then 
		\[([\widetilde{\Ind}_{\fp_{ij}}^{\fp_{123}}]-F_{ij})(X_i+X_i^{-1})\ch V_{ij}(\mu)=0,\]
		and hence 
		\[([\widetilde{\Ind}_{\fp_{ij}}^{\fp_{123}}]-F_{ij})\ch V_{ij}(\mu)=0, \mu_i \geq \nu_i-1, \mu_j \geq \nu_j.\]
		Then by induction we get $[\widetilde{\Ind}_{\fp_{ij}}^{\fp_{123}}]=F_{ij}$.
	\end{proof}
	
	In order for the operator $[\widetilde{\Ind}_{\fp_{ij}}^{\fp_{123}}]$ used above to be defined at all, we need the induction functor to be of finite homological dimension, cf.\ Subsection~\ref{ssec:GrothendieckGroups}. This is the content of the following lemma.
	
	\begin{lemma}\label{lem:FiniteHomologicalDimension}
		$\mathbf{L}^{(m)}\widetilde{\Ind}_{\fp_{ij}}^{\fp_{123}}=0$ for $m \geq 3$. Consequently the operator
		\[[\widetilde{\Ind}_{\fp_{ij}}^{\fp_{123}}]=\sum_{m=0}^{2}(-1)^m\bigl[\mathbf{L}^{(m)}\widetilde{\Ind}_{\fp_{ij}}^{\fp_{123}}\bigr]\]
		is well defined on the Grothendieck groups.
	\end{lemma}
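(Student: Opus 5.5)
Factor the induction through the intermediate subalgebra $\fp_{ij}^+$ used in the proof of Lemma~\ref{lem:InductionOperator}:
\[\Lind_{\fp_{ij}}^{\fp_{123}}\simeq \Lind_{\fp_{ij}^+}^{\fp_{123}}\circ \Ind_{\fp_{ij}}^{\fp_{ij}^+}.\]
The inner functor is exact, because $\U(\fp_{ij}^+)$ is free over $\U(\fp_{ij})$. It also sends irreducibles to modules of finite length, namely at most four composition factors. So it suffices to show that $\mathbf{L}^{(m)}\widetilde{\Ind}_{\fp_{ij}^+}^{\fp_{123}}=0$ for $m\geq 3$; in fact I expect the bound $m\geq 3$ to hold with room to spare, giving vanishing already for $m\ge 3$ via a bound of $2$.

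Next I would compare with the reductive parts. We have $\fp_{ij}^{+}\supset \fb$, and $\fp_{123}$ is obtained from $\fp_{ij}^+$ by adjoining only the two even root vectors $f_i,f_j$ of the degree-zero $\fsl_2^{(i)}\oplus\fsl_2^{(j)}$. Indeed, the odd root vectors $e_{-\alpha_k}$ and $e_{-\theta}$ are already in $\fp_{ij}^+$. Let me double-check this against the roots: the missing negative roots in $\fp_{ij}$ relative to $\fp_{123}$ are $-\alpha_i$, $-\alpha_j$, $-\alpha_k$, $-\theta$, $-2\varepsilon_i$, $-2\varepsilon_j$. Since $\fp_{ij}\ni e_{-\alpha_i},e_{-\alpha_j}$, the quotient $\fp_{123}/\fp_{ij}^+$ is spanned by $f_i,f_j$.

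Projectives in $\mathcal{O}^{gr}(\fp_{ij}^+)$ are of the form $\U(\fp_{ij}^+)\otimes_{\U(\fh\oplus\fsl_2^{(k)})}V$, as in Lemma~\ref{lem:Projectivity}. Applying $\widetilde{\Ind}$ to such a projective gives $\U(\fp_{123})\otimes_{\U(\fp_{123}^{red})}\widetilde{\Ind}_{\fb'}^{\fp_{123}^{red}}V$, up to the integrable quotient. A cleaner way to see the homological-degree bound is a Shapiro/PBW-type argument: the derived functor of $\widetilde{\Ind}_{\fp_{ij}^+}^{\fp_{123}}$, after restriction to $\fp_{123}^{red}$, is computed by the derived functors of $\widetilde{\Ind}$ for the pair $\fb_{\fsl_2}^{(i)}\oplus\fb_{\fsl_2}^{(j)}\subset \fsl_2^{(i)}\oplus\fsl_2^{(j)}$, tensored with $\fsl_2^{(k)}\oplus\fh$ data.

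Concretely, take a projective resolution $P^\bullet\to M$ in $\mathcal{O}^{gr}(\fp_{ij}^+)$. Restricted to the reductive part $\fl:=\fh+\fsl_2^{(k)}$ of $\fp_{ij}^+$, each $P^n$ is of the form $\U(R^+)\otimes V$, which is induced from $\fl$-modules. The complex $\widetilde{\Ind}(P^\bullet)$ can then be filtered, for instance by the PBW filtration of $\U(R)$ together with the radical grading, which is bounded in each $(q,t)$-degree. The associated graded pieces are derived inductions for $\fb_{\fsl_2}^{(i)}\oplus\fb_{\fsl_2}^{(j)}\subset\fsl_2^{(i)}\oplus\fsl_2^{(j)}$. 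By Lemma~\ref{lem:Sl2InductionCoinduction}(1) and the Künneth formula, these have amplitude at most $1+1=2$. A spectral sequence argument, convergent because each graded component is finite, then yields $\mathbf{L}^{(m)}=0$ for $m\geq 3$.

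Equivalently, and perhaps more transparently, I would use the Koszul-type (relative Chevalley–Eilenberg) resolution of the trivial $(\fp_{123},\fp_{ij}^+)$ relative pair. Since $\fp_{123}/\fp_{ij}^+$ is two-dimensional and even, the relative standard complex has length $2$. This again gives vanishing of $\mathbf{L}^{(m)}$ for $m\geq 3$.

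The main obstacle is making the reduction to the reductive $\fsl_2\times\fsl_2$ case rigorous. One has to show that the integrable-quotient functor computed for $\fp_{123}$ agrees with that computed for the reductive part, i.e.\ that integrability is a condition only on $\fp_{123}^{red}$ and commutes with the free $\U(R)$ factor. I would handle this by noting that $\widetilde{\Ind}$ of a projective $\U(\fp_{ij}^+)\otimes_{\U(\fl)}V$ equals $\U(\fp_{123})\otimes_{\U(\fp_{123}^{red})}\widetilde{\Ind}_{\fl+\fb}^{\fp_{123}^{red}}V$. Then $\widetilde{\Ind}$ sends projectives to modules whose derived behavior is exactly the $\fsl_2^{(i)}\oplus\fsl_2^{(j)}$ Bott–Borel–Weil situation, and the bound of homological dimension $2$ transfers. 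Finally, the formula for $[\widetilde{\Ind}_{\fp_{ij}}^{\fp_{123}}]$ follows from Subsection~\ref{ssec:GrothendieckGroups} with $k=2$.
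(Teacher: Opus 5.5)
Your skeleton is the paper's proof. You factor through $\fp_{ij}^+$ and use that $\Ind_{\fp_{ij}}^{\fp_{ij}^+}$ is exact; it also preserves projectives, being left adjoint to an exact functor. You note that $\fp_{123}/\fp_{ij}^+$ is spanned by $f_i,f_j$, and you get amplitude at most $1+1=2$ from Lemma~\ref{lem:Sl2InductionCoinduction}(1). The paper handles the reduction to $\fsl_2$ by viewing $\widetilde{\Ind}_{\fp_{ij}^+}^{\fp_{123}}$ as a composition of two rank-one integrable inductions, through the subalgebra $\fp_{ij}^+\oplus\Bbbk f_i$. There is one induction for each commuting copy of $\fsl_2$, and each has homological dimension at most one.

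The device you propose for your ``main obstacle'', however, is wrong and should be dropped. Put $\mathfrak{l}=\fh\oplus\fsl_2^{(k)}$. Your claimed isomorphism $\widetilde{\Ind}\bigl(\U(\fp_{ij}^+)\otimes_{\U(\mathfrak{l})}V\bigr)\simeq\U(\fp_{123})\otimes_{\U(\fp_{123}^{red})}\widetilde{\Ind}_{\mathfrak{l}+\fb}^{\fp_{123}^{red}}V$ fails. By adjunction the left side represents $N\mapsto\Hom_{\mathfrak{l}}(V,N)$. The right side represents $N\mapsto\Hom_{\mathfrak{l}+\fb}(V,N)$, which only sees vectors killed by $e_i,e_j$. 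They already differ for $N=V_{123}(\mu)$ with $\mu_i=2$ and $V$ of $\varepsilon_i$-weight $0$. Since $e_i,e_j$ lie in the radical of $\fp_{ij}^+$ and not in $\mathfrak{l}$, the correct value is $\U(\fp_{123})\otimes_{\U(\fp_{123}^{red})}\bigl(\U(\fp_{123}^{red})\otimes_{\U(\mathfrak{l})}V\bigr)_{int}$. This is merely a projective of $\mathcal{O}^{gr}(\fp_{123})$, and knowing where projectives go says nothing about the cohomological amplitude of $\widetilde{\Ind}(P^\bullet)$. Likewise, the length of the relative Chevalley--Eilenberg complex of $(\fp_{123},\fp_{ij}^+)$ is not by itself related to $\mathbf{L}^{(m)}\widetilde{\Ind}$.

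If you prefer a simultaneous K\"unneth-type argument to the paper's iterated one, the correct mechanism is base change to $\mathfrak{s}=\fsl_2^{(i)}\oplus\fsl_2^{(j)}$:
\begin{itemize}
\item By PBW, $\Ind_{\fp_{ij}^+}^{\fp_{123}}M\simeq\U(\mathfrak{s})\otimes_{\U(\mathfrak{s}\cap\fp_{ij}^+)}M$ as $\mathfrak{s}$-modules.
\item The integrable quotient only involves $\mathfrak{s}$.
\item Projectives of $\mathcal{O}^{gr}(\fp_{ij}^+)$ restrict to modules free over $\Bbbk[e_i,e_j]$, so restriction commutes with the derived functors.
\item On the $\mathfrak{s}\cap\fp_{ij}^+$ side, the Koszul complex of $\Bbbk[e_i,e_j]$ gives projective resolutions of length $2$.
\end{itemize}
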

	\begin{proof}
		We use the factorization \eqref{eq:CompositionInductions} through the intermediate subalgebra $\fp_{ij}^+$. The functor $\Ind^{\fp_{ij}^+}_{\fp_{ij}}$ is exact, since $\U(\fp_{ij}^+)$ is free as a right $\U(\fp_{ij})$-module. Moreover $\fp_{ij}^+$ is obtained from $\fp_{ij}$ by adding two odd root vectors only, so no integrable quotient has to be taken. Hence it suffices to bound the homological dimension of $\widetilde{\Ind}_{\fp_{ij}^+}^{\fp_{123}}$.
		
		The complement of $\fp_{ij}^+$ in $\fp_{123}$ is spanned by the two even root vectors $e_{-2\varepsilon_i}=f_i$ and $e_{-2\varepsilon_j}=f_j$. Therefore $\widetilde{\Ind}_{\fp_{ij}^+}^{\fp_{123}}$ is the composition of the two integrable induction functors corresponding to the two commuting copies of $\fsl_2$ indexed by $i$ and $j$. By part (1) of Lemma~\ref{lem:Sl2InductionCoinduction} each of them has homological dimension one, so the composition has homological dimension at most two.
	\end{proof}
	\begin{remark}
		The number $2$ obtained here is exactly the shift appearing in Theorem~\ref{thm:IsomorphismInductionCoinduction}.
	\end{remark}
	
	In Section~\ref{sec:Orthogonality} the coinduction functor is applied to modules of the form $\Res_{\fp_{ij}}^{\fp_{123}}N$ with $N \in \mathcal{O}^{gr}(\fp_{123})$, that is, to objects of $\mathcal{O}^{gr}(\fp_{ij})$ and not of $\mathcal{O}^{\vee}(\fp_{ij})$. We therefore record separately that for our pair the coinduction is defined on $\mathcal{O}^{gr}$ as well.
	
	\begin{lemma}\label{lem:CoindPreservesO}
		$\widetilde\coInd_{\fp_{ij}}^{\fp_{123}}$ and its right derived functors map $\mathcal{O}^{gr}(\fp_{ij})$ to $\mathcal{O}^{gr}(\fp_{123})$, and $\mathbf{R}^{(m)}\widetilde\coInd_{\fp_{ij}}^{\fp_{123}}=0$ for $m \geq 3$.
	\end{lemma}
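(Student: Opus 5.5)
We mirror the argument of Lemma~\ref{lem:FiniteHomologicalDimension}, with coinduction in place of induction. The plan is to factor the coinduction through the intermediate subalgebra $\fp_{ij}^+=\fp_{ij}\oplus\langle e_{-\alpha_k},e_{-\alpha_1-\alpha_2-\alpha_3}\rangle$:
\[\widetilde\coInd_{\fp_{ij}}^{\fp_{123}}\simeq \widetilde\coInd_{\fp_{ij}^+}^{\fp_{123}}\circ \coInd_{\fp_{ij}}^{\fp_{ij}^+}.\]
This factorization follows from the adjunctions of Proposition~\ref{prop:Adjoint}, or directly from transitivity of $\Hom_{\U(\cdot)}(\U(\fp_{123}),\bullet)$ together with the fact that no integrability condition is imposed at the first step. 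The reductive part is unchanged from $\fp_{ij}$ to $\fp_{ij}^+$, since only odd vectors are added.

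For the first factor, $\U(\fp_{ij}^+)\simeq \Lambda\langle e_{-\alpha_k},e_{-\theta}\rangle\otimes\U(\fp_{ij})$ is free of finite rank as a $\U(\fp_{ij})$-module. Hence $\coInd_{\fp_{ij}}^{\fp_{ij}^+}$ is exact, and as a graded $\fh$-module it equals $\Lambda\langle e_{-\alpha_k},e_{-\theta}\rangle^*\otimes M$. Its weights, $z$-degrees and $t$-degrees are therefore shifted by finitely many amounts, so it maps $\mathcal{O}^{gr}(\fp_{ij})$ to $\mathcal{O}^{gr}(\fp_{ij}^+)$. Integrability over $\fp_{ij}^{red}$ is preserved because the tensor factor is finite dimensional.

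For the second factor, the complement of $\fp_{ij}^+$ in $\fp_{123}$ is $\langle f_i,f_j\rangle$. So $\widetilde\coInd_{\fp_{ij}^+}^{\fp_{123}}$ reduces to integrable coinduction for the two commuting pairs $\fb_{\fsl_2}\subset\fsl_2$ with indices $i$ and $j$; the radical and the remaining $\fsl_2^{(k)}$ are carried along and their action extends. We must also check that the grading stays in $\mathcal{O}^{gr}$. The coinduced module is $S(\langle f_i,f_j\rangle)^\vee\otimes M$, whose weights are unbounded above in the $i,j$ directions. However, its maximal integrable submodule has, in each weight space, only $\fsl_2^{(i)}\oplus\fsl_2^{(j)}$-isotypic components coming from $\mathbf{R}^{(0)}\widetilde\coInd(\Bbbk_{\mu\varepsilon})=V_{-\mu\varepsilon}$. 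These are finite dimensional, with weights bounded by the $h_i,h_j$-weights of $M$ in each fixed $z$-degree, which are bounded because $M\in\mathcal{O}^{gr}$.

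This gives the first claim. The vanishing then follows by applying the Grothendieck spectral sequence to the composition of the two $\fsl_2$ coinductions. Each of them has $\mathbf{R}^{(m)}=0$ for $m\ge2$ by Lemma~\ref{lem:Sl2InductionCoinduction}(1), so $\mathbf{R}^{(m)}\widetilde\coInd_{\fp_{ij}^+}^{\fp_{123}}=0$ for $m\ge3$. Combined with exactness of the first factor, which sends injectives to $\widetilde\coInd$-acyclics since it is right adjoint to an exact restriction, this gives $\mathbf{R}^{(m)}\widetilde\coInd_{\fp_{ij}}^{\fp_{123}}=0$ for $m\ge3$.

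The main obstacle is making the $\fsl_2$ reduction rigorous. One has to check that the right derived functors of coinduction over $\fp_{ij}^+\subset\fp_{123}$, computed in the graded integrable category, agree with the relative Lie algebra cohomology of $\fsl_2^{(i)}\oplus\fsl_2^{(j)}$ relative to its Borel. The tool for this is a relative (Koszul/Chevalley–Eilenberg) complex on $\Lambda^\bullet\langle f_i,f_j\rangle$, which has length $2$. That length bound is also what gives the vanishing for $m\ge3$ directly.
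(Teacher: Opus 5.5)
Your proposal is correct and follows essentially the same route as the paper. You factor through $\fp_{ij}^+$, use that $\coInd_{\fp_{ij}}^{\fp_{ij}^+}M\simeq\Lambda(\fn)^*\otimes M$ is exact and harmless, and reduce the remaining step to the two commuting $\fsl_2$-coinductions controlled by Lemma~\ref{lem:Sl2InductionCoinduction}; the paper's proof only asserts this $\fsl_2$ reduction and the resulting weight and homological bounds, whereas you also spell out the spectral sequence and weight argument and flag the reduction step.
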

	\begin{proof}
		We again factor through $\fp_{ij}^+$. The complement of $\fp_{ij}$ in $\fp_{ij}^+$ is purely odd of dimension $(0|2)$, so that $\coInd^{\fp_{ij}^+}_{\fp_{ij}}M \simeq \Lambda(\fn)^*\otimes M$ is a finite direct sum of weight shifts of $M$. In particular this functor is exact and preserves both the boundedness of the weights and the integrability over the reductive part. For the second step, $\widetilde\coInd_{\fp_{ij}^+}^{\fp_{123}}$ is the Zuckerman type functor associated with the two commuting copies of $\fsl_2$ indexed by $i$ and $j$, and Lemma~\ref{lem:Sl2InductionCoinduction} gives both the required bound on the homological dimension and the fact that the values are integrable modules with weights bounded from above.
	\end{proof}
	
	\begin{definition}\label{def:Fdefinition}
		Let us introduce the functor 
		\[\mathcal{F}_{ij}:=\LInd_{\fp_{ij}}^{\fp_{123}} \circ \pi_{(i,j)(k,0)}\circ {\Res}_{\fp_{ij}}^{\fp_{123}}.\]
	\end{definition}
	
	The computations of this subsection immediately give us
	\begin{proposition}\label{prop:FEqual}
		\[[\mathcal F_{ij}]=F_{ij} \circ \pi_{(i,j)(k,0)}.\]
	\end{proposition}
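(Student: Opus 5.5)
The plan is to use that the class map of a composition of functors is the composition of the class maps, provided each factor is either exact or of finite homological dimension, so that the Euler characteristic on $K_0$ is well defined. Here $\mathcal{F}_{ij}$ is the composition of three functors. The restriction $\Res_{\fp_{ij}}^{\fp_{123}}$ is exact, and on characters it is the inclusion of $s_1,s_2,s_3$-invariants into $s_k$-invariants; in the identification of Grothendieck groups used above it acts as the identity on Laurent series. The twist $\pi_{(i,j)(k,0)}$ is exact: it is an endofunctor of $\mathcal{O}^{gr}(\fp_{ij})$ because $\fp_{ij}$ is stable under this automorphism by Corollary~\ref{cor:ParabolicStable}. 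Its class is the substitution $X_k\mapsto X_k$, $X_{i}\mapsto q^{1/2}X_i^{-1}$, $X_j\mapsto q^{1/2}X_j^{-1}$ computed in the preceding lemma. Finally, $\LInd_{\fp_{ij}}^{\fp_{123}}$ has finite homological dimension by Lemma~\ref{lem:FiniteHomologicalDimension}, and its class is $F_{ij}$ by Lemma~\ref{lem:InductionOperator}.

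The key step is the compatibility of classes with composition in the derived setting. Since the first two functors are exact, they send a module $M$ to a module, namely $\pi_{(i,j)(k,0)}(\Res M)$. We then have $\mathcal{F}_{ij}(M)=\LInd(\pi\Res M)$, whose Euler characteristic is by definition $[\widetilde{\Ind}](\,[\pi\Res M]\,)$. Exactness gives $[\pi \Res M]=[\pi]([\Res](M))$. Composing, we obtain $[\mathcal{F}_{ij}]=F_{ij}\circ[\pi_{(i,j)(k,0)}]\circ[\Res]$, and $[\Res]$ is the identity under the identifications. This argument also extends to bounded complexes, since each functor preserves $D^b$ and on $D^b$ the class is the alternating sum of cohomology.

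I expect the main obstacle to be the completion issue. Modules here have infinitely many composition factors, and the twist shifts the $q$-degree via $q^{1/2}$ factors. One must check that $[\pi]$ and $F_{ij}$ are continuous in the $q$-adic topology on $\mathbb{Z}[X^{\pm1},t^{\pm1}]((q^{1/2}))$, so that the computations on irreducibles extend to infinite sums. Continuity holds because $[\pi]$ multiplies the $q$-degree of a monomial by at most a bounded shift in terms of $X$-degrees within each $z$-graded piece. I would handle this by noting that each graded component (fixed $q$-degree) of the modules involved has finite length, which the paper has already assumed. Additivity on short exact sequences of these components then gives the identity.
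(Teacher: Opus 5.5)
Your main line is the same as the paper's. The paper gives no separate proof and just composes the three class computations of the subsection: restriction is the inclusion of $s_1,s_2,s_3$-invariants into $s_k$-invariants, the twist is the substitution $X_i\mapsto q^{1/2}X_i^{-1}$, $X_j\mapsto q^{1/2}X_j^{-1}$, and $[\widetilde{\Ind}_{\fp_{ij}}^{\fp_{123}}]=F_{ij}$ by Lemma~\ref{lem:InductionOperator}, which is well defined by Lemma~\ref{lem:FiniteHomologicalDimension}. Up to that point your proposal is fine.

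The problem is your last paragraph, where you claim to dispose of the completion issue. The argument given there is false. Within $z$-degree $l$ the shift $(\lambda_i+\lambda_j)/2$ is indeed bounded, but the bound grows linearly in $l$, and it can cancel the $z$-degree exactly: $[\pi]$ sends $q^lX_i^{-2l}$ to $X_i^{2l}$. This happens for the modules you actually feed in. Take $M=P_{123}(0)\simeq \U(R)$. The vectors $(f_iz)^l\cdot 1$, $l\geq 0$, are nonzero by PBW, even, of weight $-2l\varepsilon_i$ and $z$-degree $l$. Hence the $q^0t^0$-coefficient of $[\pi](\ch M)$ contains $\sum_{l\geq 0}X_i^{2l}$, and no cancellation is possible since all even states contribute with sign $+1$. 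So $[\pi](\ch M)\notin\mathbb{Z}[X_1^{\pm1},X_2^{\pm1},X_3^{\pm1},t^{\pm1}]((q^{\frac12}))$.

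On the level of modules, $\pi(e_i)$ is proportional to $f_iz$. Therefore $e_i$ acts injectively on $\pi(\Res M)$ and preserves the twisted $z$-degree, so the graded components of $\pi(\Res M)$ are infinite dimensional. The finite-length-per-degree property you invoke holds for $M$ but is destroyed by the regrading $l\mapsto l+(\lambda_i+\lambda_j)/2$. Consequently $[\pi]$ is not a continuous endomorphism of the completed ring, and stability of $\fp_{ij}$ under $\pi$ (Corollary~\ref{cor:ParabolicStable}) does not by itself make the twist an endofunctor of $\mathcal{O}^{gr}(\fp_{ij})$ in the graded, finite-per-degree sense.

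The paper is silent on this point too; it simply asserts that the twist is an endofunctor. So you should not claim to have settled it. A clean fix is to state the identity on finite dimensional modules, or on bounded complexes of them. There $[\Res M]$ and $[\pi\Res M]$ are Laurent polynomials in $X_1,X_2,X_3,q^{\frac12},t$, every module has a finite composition series, and your composition argument goes through with no continuity needed. Otherwise you must single out a class of modules on which $l+(\lambda_i+\lambda_j)/2\to\infty$ uniformly over the weights occurring in $z$-degree $l$.
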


	\section{Categorification of the genus two DAHA}\label{sec:Categorification}

    The genus two DAHA $\mathcal{A}_{q,t}$ was introduced in \cite{ArthamonovShakirov-2019} as the algebra generated by three multiplication operators $\hat{O}_{B_{ij}}=X_{ij}+X_{ij}^{-1}$ together with three commuting difference operators

    \begin{equation}\label{eq:OAOriginalForm}
		\hat O_{A_{ij}}=\hat O_{A_k}=\sum_{a,b\in\{\pm1\}}C^{a,b}_{i,j}\delta_{i}^a \delta_{j}^b,
	\end{equation}
	where
	\begin{equation}\label{eq:Cab}
		C^{a,b}_{i,j}=ab\,
		\frac{\bigl(1-t\,X_{k}X_{i}^{a}X_{j}^{b}\bigr)\bigl(1-t\,X_{k}^{-1}X_{i}^{a}X_{j}^{b}\bigr)}
		{t\,X_{i}^{a}X_{j}^{b}\bigl(X_{i}-X_{i}^{-1}\bigr)\bigl(X_{j}-X_{j}^{-1}\bigr)},
	\end{equation}
	and $\delta_{i}^a$ are the $q$-shift operators, i.e.\ the substitutions
	\[\delta_{i}^a(X_i)=q^{\frac{a}{2}} X_i.\]
    
    The operators $\hat{O}_{A_i}$ are the genus two analogue of the Macdonald operators, their basis of common eigenfunctions being the genus two Macdonald polynomials $\Psi_{j_1,j_2,j_3}$. The Mapping class group of $\Sigma_2$ acts by automorphisms on this algebra\cite{ArthamonovShakirov-2019}, while the specialization $t=q$ is isomorphic to the skein algebra $Sk_q(\Sigma_2)$ \cite{CookeSamuelson-2021}. In this section we construct a categorification of $\mathcal{A}_{q,t}$. Namely, the six generators lift to functors, the parameters $q,t$ arise from gradings and $\mathcal{A}_{q,t}$ is recovered on the Grothendieck group.
	
	Recall the  reflections from the Weyl group which  act by $s_{k}\colon X_{k}\mapsto X_{k}^{-1}$, fixing the two remaining $X$'s.
	
	\begin{lemma}
		Let $f$ be a symmetric function, $f =s_i(f)=s_j(f)$. Then 
		\[\delta_{i}^a \delta_{j}^bf=s_i^{\frac{a+1}{2}}s_j^{\frac{b+1}{2}}\pi_{(i,j)(k,0)}f.\]
	\end{lemma}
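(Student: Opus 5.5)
Since all maps involved are ring endomorphisms of the Laurent polynomial ring (substitutions), it suffices to check the identity on the generators $X_i, X_j, X_k$ and then use that $f$ is invariant under $s_i$ and $s_j$. The plan is to compute both sides as substitutions acting on $f$, using the explicit description of $[\pi_{(i,j)(k,0)}]$ from the preceding lemma: $X_k\mapsto X_k$, $X_i\mapsto q^{\frac12}X_i^{-1}$, $X_j\mapsto q^{\frac12}X_j^{-1}$.

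We adopt the convention that operators act on $f$ by substitution and compose as operators. Then $s_i^{\frac{a+1}{2}}s_j^{\frac{b+1}{2}}\pi_{(i,j)(k,0)}f$ is obtained from $f$ by first substituting $X_i\mapsto q^{\frac12}X_i^{-1}$, $X_j\mapsto q^{\frac12}X_j^{-1}$. After that we apply $s_i$ if $a=1$ and $s_j$ if $b=1$.

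\begin{itemize}
\item For $a=1$ the variable $X_i$ ends up as $q^{\frac12}X_i$, i.e.\ exactly $\delta_i^{1}$.
\item For $a=-1$ it stays $q^{\frac12}X_i^{-1}$. Since $f$ is $s_i$-invariant, $f(\dots,q^{\frac12}X_i^{-1},\dots)=f(\dots,q^{-\frac12}X_i,\dots)$, which is $\delta_i^{-1}f$.
\end{itemize}

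The same two cases hold for $j$ with $b$, and $X_k$ is untouched on both sides.

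The only real subtlety, which I expect to be the main (though minor) obstacle, is bookkeeping of the order of composition. Substitutions compose contravariantly, so one must fix whether $s_i\pi f$ means $s_i(\pi(f))$ with $s_i$ acting on the already-substituted expression, or the reverse. I will check both orders explicitly on $X_i$.

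\begin{itemize}
\item Applying $\pi$ and then $s_i$ to the resulting polynomial gives $q^{\frac12}X_i$.
\item With the other order we would get $X_i\mapsto q^{\frac12}X_i^{-1}$ then inverted, i.e.\ $q^{-\frac12}X_i$. Here the symmetry of $f$ again lets us rewrite this as the $a$-dependent shift, possibly with $a\leftrightarrow -a$.
\end{itemize}

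I would fix the convention that makes $a=1$ correspond to $q^{\frac{1}{2}}$ as in the definition $\delta_i^a(X_i)=q^{\frac a2}X_i$. I would then note that the symmetric invariance of $f$ under $s_i,s_j$ makes the two sign cases $a=\pm1$ consistent, so the identity holds term by term.
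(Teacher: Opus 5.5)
Your computation, with $s_i\pi f$ read as $s_i(\pi f)$, is correct and is essentially the paper's argument. The paper packages the same case check as the operator identity $\delta_i^a\delta_j^b=s_i^{\frac{a+1}{2}}s_j^{\frac{b+1}{2}}\pi_{(i,j)(k,0)}\,s_i^{\frac{a-1}{2}}s_j^{\frac{b-1}{2}}$ and then drops the right-hand reflections using $s_if=s_jf=f$.

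One correction to your aside on the reversed order: there the reflection would act first on the invariant $f$ and do nothing. Both $a=\pm1$ would then give $\delta_i^{-1}f$, so the identity would fail for $a=1$; the convention is forced (and is the one the paper uses), not a harmless relabelling $a\leftrightarrow -a$.
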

    \begin{proof}
	We have
	\[\pi s_i f = \pi f.\]
	Thus 
	\[\delta_{i}^a\delta_{j}^b f=s_i^{\frac{a+1}{2}}s_j^{\frac{b+1}{2}}\pi_{(i,j)(k,0)}s_i^{\frac{a-1}{2}}s_j^{\frac{b-1}{2}}f=s_i^{\frac{a+1}{2}}s_j^{\frac{b+1}{2}}\pi_{(i,j)(k,0)}f.\]
    %, whereas $\pi$ alone gives $q^{1/2}X_i^{-1}$, which equals $q^{-1/2}X_i$ after $s_i(f)=f$ is used.    
    \end{proof}
	
	Therefore, acting on symmetric functions, the operator \eqref{eq:OAOriginalForm} takes the form
	\begin{equation}\label{eq:OAOriginal}
		\hat O_{A_{ij}}=\hat O_{A_k}=\sum_{a,b\in\{\pm1\}}C^{a,b}_{i,j}\,s_{i}^{\frac{a+1}{2}}s_{j}^{\frac{b+1}{2}}\,\pi_{(i,j)(k,0)}.
	\end{equation}

	\begin{proposition}\label{prop:OAF}
		Recall the operator $F_{ij}$, \eqref{eq:FDefinition}. One has $\hat O_{A_{ij}}=t^{-1}F_{ij}\,\pi_{(i,j)(k,0)}$.
		% \begin{equation}\label{eq:F}
			% 	F_{ij}=\frac{1}{t}
			% 	\bigl(1-X_{k,i}^{-2}\bigr)^{-1}\bigl(1-X_{k,j}^{-2}\bigr)^{-1}
			% 	\bigl(1-X_{k,i}^{-2}s_{k,i}\bigr)\bigl(1-X_{k,j}^{-2}s_{k,j}\bigr)
			% 	\bigl(1-t\,X^{-\alpha_k}\bigr)
			% 	\bigl(1-t\,X^{-\alpha_i-\alpha_j-\alpha_k}\bigr).
			% \end{equation}
	\end{proposition}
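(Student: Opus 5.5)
The plan is to verify the identity directly on symmetric functions. Both sides are difference operators of the form $\sum_{\epsilon,\epsilon'\in\{0,1\}}(\text{rational coefficient})\,s_i^{\epsilon}s_j^{\epsilon'}\,\pi_{(i,j)(k,0)}$. By \eqref{eq:OAOriginal} it suffices to show the following: if we expand $F_{ij}$ as $\sum_{\epsilon,\epsilon'}c_{\epsilon,\epsilon'}\,s_i^{\epsilon}s_j^{\epsilon'}$ with all multiplication operators moved to the left, then $c_{\epsilon,\epsilon'}=t\,C^{a,b}_{i,j}$ with $\epsilon=\frac{a+1}{2}$ and $\epsilon'=\frac{b+1}{2}$. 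Composing on the right with $\pi_{(i,j)(k,0)}$ then gives $F_{ij}\pi_{(i,j)(k,0)}=t\,\hat O_{A_{ij}}$.

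\textbf{Rewrite the multiplicative factor.} From the definitions of $\alpha_k$ and $\theta$,
\[
g:=(1-tX^{-\alpha_k})(1-tX^{-\alpha_1-\alpha_2-\alpha_3})=\bigl(1-tX_kX_i^{-1}X_j^{-1}\bigr)\bigl(1-tX_k^{-1}X_i^{-1}X_j^{-1}\bigr).
\]
This is exactly the numerator of $C^{-1,-1}_{i,j}$ in \eqref{eq:Cab}. Its conjugates satisfy
\[
s_i^{\epsilon}s_j^{\epsilon'}(g)=\bigl(1-tX_kX_i^{a}X_j^{b}\bigr)\bigl(1-tX_k^{-1}X_i^{a}X_j^{b}\bigr),
\]
which is the numerator of $C^{a,b}_{i,j}$.

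\textbf{Expand $F_{ij}$.} The factors involving $s_i$ only contain $X_i$, and those involving $s_j$ only contain $X_j$, so they commute past each other. This gives
\[
(1-X_i^{-2})^{-1}(1-X_i^{-2}s_i)=\sum_{\epsilon}\frac{(-X_i^{-2})^{\epsilon}}{1-X_i^{-2}}\,s_i^{\epsilon},
\]
and similarly for $j$. Moving $g$ to the left of $s_i^{\epsilon}s_j^{\epsilon'}$ replaces it by $s_i^{\epsilon}s_j^{\epsilon'}(g)$. Hence
\[
c_{\epsilon,\epsilon'}=\frac{(-X_i^{-2})^{\epsilon}(-X_j^{-2})^{\epsilon'}}{(1-X_i^{-2})(1-X_j^{-2})}\,s_i^{\epsilon}s_j^{\epsilon'}(g).
\]

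\textbf{Match the denominators.} It remains to check the elementary identities
\[
\frac{1}{1-X_i^{-2}}=\frac{1}{X_i^{-1}(X_i-X_i^{-1})},\qquad \frac{-X_i^{-2}}{1-X_i^{-2}}=\frac{-1}{X_i(X_i-X_i^{-1})},
\]
and the same for $j$. These show that the denominator and the sign $ab$ of $t\,C^{a,b}_{i,j}$ agree with those of $c_{\epsilon,\epsilon'}$; the factor $t$ cancels against the $t$ in the denominator of \eqref{eq:Cab}. Combining with the previous step yields $c_{\epsilon,\epsilon'}=t\,C^{a,b}_{i,j}$, and the claim follows from \eqref{eq:OAOriginal}.

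The only delicate point is bookkeeping of the operator order. The proposition is stated after the lemma converting $\delta_i^a\delta_j^b$ into $s_i^{(a+1)/2}s_j^{(b+1)/2}\pi_{(i,j)(k,0)}$, and that conversion is only valid on $s_i,s_j$-symmetric inputs. Therefore the identity should be read as an equality of operators on $\mathbb{Z}[X^{\pm1}_1,X^{\pm1}_2,X^{\pm1}_3,t^{\pm1}]^{s_1,s_2,s_3}((q^{\frac12}))$, and one must make sure that the conjugation $s(g)$ is applied on the correct side.
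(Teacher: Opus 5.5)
Your proof is correct and is essentially the paper's computation run in the opposite direction. The paper starts from $\sum_{a,b}C^{a,b}_{i,j}s_i^{\frac{a+1}{2}}s_j^{\frac{b+1}{2}}$, pulls out $(1-X_i^{-2})^{-1}(1-X_j^{-2})^{-1}$, commutes the numerators to the right (where they all become $(1-tX^{-\alpha_k})(1-tX^{-\alpha_1-\alpha_2-\alpha_3})$) and factorizes the remaining sum as $(1-X_i^{-2}s_i)(1-X_j^{-2}s_j)$, with the per-index signs $-a,-b$ cancelling exactly as in your coefficient match; your ordering (prove $F_{ij}=t\sum_{a,b}C^{a,b}_{i,j}s_i^{\frac{a+1}{2}}s_j^{\frac{b+1}{2}}$ as an operator identity, then compose with $\pi_{(i,j)(k,0)}$ and apply \eqref{eq:OAOriginal}) is, if anything, slightly cleaner than the paper's step of multiplying $\hat O_{A_{ij}}$ on the right by $\pi_{(i,j)(k,0)}$.
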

	
	\begin{proof}
		Let us multiply both sides of the desired equality from the right by the Klein element $\pi_{(i,j)(k,0)}$. We get the following equality equivalent to the initial one:
		\begin{equation}\label{eq:core}
			\sum_{a,b\in\{\pm1\}}C^{a,b}_{i,j}\,
			s_{i}^{\frac{a+1}{2}}s_{j}^{\frac{b+1}{2}}=t^{-1}F_{ij}.
		\end{equation}
		
		Since $X_{i}-X_{i}^{-1}=X_{i}\bigl(1-X_{i}^{-2}\bigr)$, and similarly for $j$, the factor $\bigl(1-X_{i}^{-2}\bigr)^{-1} \bigl(1-X_{j}^{-2}\bigr)^{-1}$ is independent of $a,b$ and may be pulled out of the sum:
		\begin{equation}\label{eq:step1}
			\text{LHS of }\eqref{eq:core}=\frac{1}{t}\bigl(1-X_{i}^{-2}\bigr)^{-1}\bigl(1-X_{j}^{-2}\bigr)^{-1}\!\!\sum_{a,b\in\{\pm1\}}\!\! ab\,\frac{\bigl(1-t X_{k}X_{i}^{a}X_{j}^{b}\bigr)\bigl(1-t X_{k}^{-1}X_{i}^{a}X_{j}^{b}\bigr)} {X_{i}^{a+1}X_{j}^{b+1}}\,s_{i}^{\frac{a+1}{2}}s_{j}^{\frac{b+1}{2}}.
		\end{equation}
		Define now the group elements $\sigma_{a,b}:=s_{i}^{\frac{a+1}{2}}s_{j}^{\frac{b+1}{2}}$. If $a=1$ then $\sigma_{a,b}$ contains $s_{i}$ and sends $X_{i}^{a}=X_{i}$ to $X_{i}^{-1}$. If $a=-1$ then $\sigma_{a,b}$ fixes $X_{i}$ and the exponent is already $-1$. Either way $\bigl(X_{i}^{a}\bigr)^{\sigma_{a,b}}=X_{i}^{-1}$, and similarly for $X_{j}$. Hence for every $a,b\in\{\pm1\}$,
		\begin{align}
			\bigl(1-t X_{k}X_{i}^{a}X_{j}^{b}\bigr)
			\bigl(1-t X_{k}^{-1}X_{i}^{a}X_{j}^{b}\bigr)\sigma_{a,b}
			&=\sigma_{a,b}\bigl(1-t X_{k}X_{i}^{-1}X_{j}^{-1}\bigr)
			\bigl(1-t X_{k}^{-1}X_{i}^{-1}X_{j}^{-1}\bigr)
			\notag\\
			&=\sigma_{a,b}\bigl(1-t X^{-\alpha_k}\bigr)
			\bigl(1-t X^{-\alpha_1-\alpha_2-\alpha_3}\bigr),
			\label{eq:step2}
		\end{align}
		the last equality because
		$X_{k}X_{j}^{-1}X_{i}^{-1}=X^{-\alpha_k}$ and
		$X_{k}^{-1}X_{j}^{-1}X_{i}^{-1}=X^{-\alpha_i-\alpha_j-\alpha_k}$.
		The right-hand factors no longer depend on $a,b$, so they too leave the sum.
		
		Since $s_{i}$ fixes $X_{j}$ and $s_{j}$ fixes $X_{i}$, the sum
		factorizes, and for a single index
		\begin{equation}
			\sum_{a\in\{\pm1\}}a\,X_{i}^{-a-1}s_{i}^{\frac{a+1}{2}}
			=X_{i}^{-2}s_{i}-1=-\bigl(1-X_{i}^{-2}s_{i}\bigr).
		\end{equation}
		Multiplying the two factors, the signs cancel:
		\begin{equation}\label{eq:step3}
			\sum_{a,b\in\{\pm1\}}ab\,X_{i}^{-a-1}X_{j}^{-b-1}\,
			s_{i}^{\frac{a+1}{2}}s_{j}^{\frac{b+1}{2}}
			=\bigl(1-X_{i}^{-2}s_{i}\bigr)\bigl(1-X_{j}^{-2}s_{j}\bigr).
		\end{equation}
		Substituting \eqref{eq:step2} and \eqref{eq:step3} into \eqref{eq:step1}
		yields \eqref{eq:core}, which completes the proof.
	\end{proof}
	
	Then we get the following
	\begin{theoremB}\label{thm:MainTheorem}
		We have the following action of the functors $\mathcal{F}_{ij}$ on the Grothendieck group:
		\[[\mathcal{F}_{ij}]=t \hat O_{A_{ij}}.\]
		In other words, $\mathcal{F}_{ij}$ categorifies the genus two Macdonald operators.
	\end{theoremB}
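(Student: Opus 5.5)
The plan is to combine Proposition~\ref{prop:FEqual} with Proposition~\ref{prop:OAF}; almost all the work is already done in those two statements.

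First, I will read off $[\mathcal F_{ij}]$ from Definition~\ref{def:Fdefinition}. The functor $\mathcal F_{ij}$ is a composition of three functors. The first two, $\Res_{\fp_{ij}}^{\fp_{123}}$ and the twist by $\pi_{(i,j)(k,0)}$, are exact, so their classes on $K_0$ are simply $[M]\mapsto[\Res M]$ and $[M]\mapsto[M^{\pi}]$. The third, $\Lind_{\fp_{ij}}^{\fp_{123}}$, has finite homological dimension by Lemma~\ref{lem:FiniteHomologicalDimension}. Passing to classes is therefore multiplicative along the composition, and I get
\[[\mathcal F_{ij}]=[\widetilde{\Ind}_{\fp_{ij}}^{\fp_{123}}]\circ[\pi_{(i,j)(k,0)}]\circ[\Res_{\fp_{ij}}^{\fp_{123}}].\]

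Next, I substitute the known formulas for each factor:
\begin{itemize}
\item $[\Res_{\fp_{ij}}^{\fp_{123}}]$ is the inclusion of $s_1,s_2,s_3$-invariants into $s_k$-invariants;
\item $[\pi_{(i,j)(k,0)}]$ is the substitution computed in the lemma preceding Lemma~\ref{lem:InductionOperator};
\item $[\widetilde{\Ind}_{\fp_{ij}}^{\fp_{123}}]=F_{ij}$ by Lemma~\ref{lem:InductionOperator}.
\end{itemize}
This is exactly Proposition~\ref{prop:FEqual}: $[\mathcal F_{ij}]=F_{ij}\circ\pi_{(i,j)(k,0)}$ as an operator on the $s_1,s_2,s_3$-symmetric Laurent series, which is $K_0(\mathcal O^{gr}(\fp_{123}))$.

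Finally, Proposition~\ref{prop:OAF} gives $\hat O_{A_{ij}}=t^{-1}F_{ij}\pi_{(i,j)(k,0)}$ on symmetric functions, since \eqref{eq:OAOriginal} was derived under the symmetry assumption. Multiplying by $t$ yields $[\mathcal F_{ij}]=t\,\hat O_{A_{ij}}$.

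The only point needing care is that both sides are compared on the same domain. The identity \eqref{eq:OAOriginal} used the lemma $\delta_i^a\delta_j^b f=s_i^{(a+1)/2}s_j^{(b+1)/2}\pi f$, which is valid only for $f$ symmetric in $X_i,X_j$. This holds here because the image of $[\Res]$ consists of fully symmetric functions. The map $\pi$ preserves $s_k$-invariance, so $F_{ij}$ is applied within $K_0(\mathcal O^{gr}(\fp_{ij}))$, and the output lands in the symmetric ring, consistent with Lemma~\ref{lem:InductionOperator}.

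I also need the identity of Proposition~\ref{prop:OAF} to extend from Laurent polynomials to the $q^{1/2}$-adic completion $((q^{1/2}))$. Since all operators involved are continuous and $q$-linear up to the substitution, this follows by continuity.
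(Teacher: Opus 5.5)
Your proposal is correct and follows the paper's proof. The paper likewise just combines Proposition~\ref{prop:FEqual}, $[\mathcal F_{ij}]=F_{ij}\circ\pi_{(i,j)(k,0)}$, with Proposition~\ref{prop:OAF}, $\hat O_{A_{ij}}=t^{-1}F_{ij}\pi_{(i,j)(k,0)}$. Your additional remarks (exactness of $\Res$ and of the twist, finite homological dimension of the induction, and the fact that $\hat O_{A_{ij}}$ is only compared on fully symmetric functions) are accurate bookkeeping that the paper leaves implicit.
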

  \begin{proof}
        By Proposition \ref{prop:FEqual} one has $[\mathcal{F}_{ij}]=F_{ij}\circ\pi_{(i,j)(k,0)}$, and by Proposition~\ref{prop:OAF} $\hat O_{A_{ij}}=t^{-1}F_{ij}\pi_{(i,j)(k,0)}$, whence $[\mathcal{F}_{ij}]=t\hat O_{A_{ij}}$. 
    \end{proof}
    
	\section{Orthogonality}\label{sec:Orthogonality}
	In this section we compute the $\Ext$ pairing of Subsection~\ref{ssec:ExtPairing} explicitly for the category $\mathcal{O}^{gr}(\fp_{123})$ and prove that the functors $\mathcal{F}_{ij}$ are self-adjoint with respect to it.
	
	\subsection{Projective modules}\label{ssec:ProjectiveModules}
	In this subsection we discuss the characters of projective modules to compute the pairing on the Grothendieck group in the explicit form.
	
	Let $R$ be the pro-nilpotent radical of the Lie superalgebra $\fp_{123}$, so that $\fp_{123}\simeq \fp_{123}^{red}\ltimes R$ with $\fp_{123}^{red}\simeq \fsl_2\oplus \fsl_2 \oplus \fsl_2$ spanned by $\{e_i,h_i,f_i\}$ in $z$-degree zero. Then
	\[R=\Bbbk K \oplus \spn\bigl\langle \{e_{\pm 2 \varepsilon_i}z^k,\ h_iz^k \mid k \geq 1\} \cup \{v_{\pm \varepsilon_1 }\otimes v_{\pm \varepsilon_2} \otimes v_{\pm \varepsilon_3} z^k \mid k \geq 0\}\bigr\rangle.\]
	Note that the central element $K$ has to be included in the radical: it is even and central, and it is not contained in $\fp_{123}^{red}$. We consider only modules of level zero, i.e.\ modules on which $K$ acts by zero. On such modules $\U(R)$ acts through $\U(R/\Bbbk K)$, which is why $K$ does not contribute to the character computed below. The three families $h_iz^k$, $k \geq 1$, are the root vectors of the imaginary roots $k\delta$, whose multiplicity is three, they produce the factor $(q;q)^{-3}$.

	Note that
	\[P_{123}(\mu) \simeq \U(\fp_{123})\otimes_{\U(\fp_{123}^{red})}V_{123}(\mu).\]
	
	Therefore there is an isomorphism of the graded vector spaces
	\[P_{123}(\mu) \simeq \U(R)\otimes V_{123}(\mu).\]
	
	Therefore we have the following equality of the characters:
	\[\ch (P_{123}(\mu))=\ch( \U(R))\ch ( V_{123}(\mu)).\]
	
	Note that
	\begin{equation*}
		\ch( \U(R))=\frac{\prod_{l \geq 0}\prod_{a,b,c\in\{\pm 1\}}(1-X_1^aX_2^bX_3^c q^lt)}{\prod_{l \geq 1}\prod_{i=1}^3(1-X_i^2 q^l)(1-X_i^{-2} q^l)}(q;q)^{-3},
	\end{equation*}
	where $(q;q):=\prod_{l \geq 1}(1-q^l)$ is the $q$-Pochhammer symbol.
	
	Thus we have
	\begin{equation*}
		\left(\frac{\prod_{l \geq 0}\prod_{a,b,c\in\{\pm 1\}}(1-X_1^aX_2^bX_3^c q^lt)}{\prod_{l \geq 1}\prod_{i=1}^3(1-X_i^2 q^l)(1-X_i^{-2} q^l)}[V_{123}(\mu)],[V_{123}(\mu')]\right)=\delta_{\mu,\mu'}(q;q)^{-3}.
	\end{equation*}
	
	For $f \in \mathbb{Z}[X_1^{\pm 1},X_2^{\pm 1},X_3^{\pm 1},t^{\pm 1}]((q^{\pm \frac{1}{2}}))$ we denote by $f_0 \in \mathbb{Z}[t^{\pm 1}]((q^{\pm \frac{1}{2}}))$ its $X$-free term. Then for the modules over a finite-dimensional semisimple Lie algebra $\fg$ we have
	\[\left(\prod_{\alpha \in \Phi_{\fg}}(1-X^\alpha)\ch(V_\lambda)\ch(V_{\lambda'})\right)_0=\delta_{\lambda,\lambda'}.\]
	Here $\Phi_\fg$ is the root system of the Lie algebra $\fg$. Note that $\fp_{123}^{red}\simeq \fsl_2 \oplus \fsl_2 \oplus \fsl_2$.
	
	Thus
	\[\left(\prod_{i=1}^3(1-X_i^2)(1-X_i^{-2})\ch V_{123}(\mu)\ch V_{123}(\mu')\right)_0=\delta_{\mu,\mu'}.\]
	
	This equality can be rephrased in the following way:
	\begin{equation}\label{eq:BilinearFormOnBasis}
		(q;q)^3\left(\frac{\prod_{l \geq 0}\prod_{i=1}^3(1-X_i^2 q^l)(1-X_i^{-2} q^l)}{\prod_{l \geq 0}\prod_{a,b,c\in\{\pm 1\}}(1-X_1^aX_2^bX_3^c q^lt)}[P_{123}(\mu)][V_{123}(\mu')]\right)_0=\delta_{\mu,\mu'}.
	\end{equation}
	
	Let us define
	\[\Delta:=(q;q)^3\frac{\prod_{l \geq 0}\prod_{i=1}^3(1-X_i^2 q^l)(1-X_i^{-2} q^l)}{\prod_{l \geq 0}\prod_{a,b,c\in\{\pm 1\}}(1-X_1^aX_2^bX_3^c q^lt)}.\]
	
	For $f \in \mathbb{Z}[X_1^{\pm 1},X_2^{\pm 1},X_3^{\pm 1},t^{\pm 1}]((q^{\frac{1}{2}}))$ we denote by $f^\star$ the substitution
	\[f^\star:=f|_{t \mapsto t^{-1},\,q \mapsto q^{-1}}.\]
	
	Then, extending the equality \eqref{eq:BilinearFormOnBasis} by skew-linearity, one gets the following lemma.
	
	\begin{lemma}\label{lem:ExtPairingExplicit}
		The $\Ext$-pairing on the Grothendieck ring can be expressed in the following form:
		\[(f,g)=(f \Delta g^\star)_0.\]
	\end{lemma}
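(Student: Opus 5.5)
The plan is to reduce the statement to a check on two bases that are dual for the pairing, and then extend by skew-bilinearity. Both sides of the claimed identity
\[(f,g)=(f\Delta g^\star)_0\]
are additive in $f$ and in $g$. Both are also skew-linear in the same way: a factor $q^lt^r$ in $f$ can be pulled out unchanged. In $g$, the Ext pairing turns $q^lt^r$ into $q^{-l}t^{-r}$, and $\star$ does exactly the same on the right-hand side. It is convenient to track $(-t)^r$, using the normalization of \eqref{eq:CharacterDefinition}; since $(-t)^{-r}=((-t)^{r})^\star$, these signs are consistent as well.

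By Lemma~\ref{lem:Projectivity}, the classes $[P_{123}(\mu)]$ and the classes $[V_{123}(\mu')]$ each span the Grothendieck group topologically, over $\mathbb{Z}[q^{\pm\frac12},t^{\pm1}]$ and in the $q$-adic completion. The first family does so because the character of $\U(R)$ is $1$ plus terms of positive $q$- or $t$-degree, so the transition matrix to irreducibles is unitriangular. It therefore suffices to verify the identity for $f=[P_{123}(\mu)]$ and $g=[V_{123}(\mu')]$.

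On the left-hand side, Subsection~\ref{ssec:ExtPairing} gives $([P_{123}(\mu)],[V_{123}(\mu')])=\delta_{\mu,\mu'}$. On the right-hand side, $g^\star=\ch V_{123}(\mu')$, since this character contains no $q$ or $t$. We also have $\ch P_{123}(\mu)=\ch\U(R)\cdot\ch V_{123}(\mu)$. The product $\ch\U(R)\cdot\Delta$ telescopes. The odd products cancel, $(q;q)^{-3}$ cancels $(q;q)^3$, and the even products over $l\ge1$ cancel, leaving exactly $\prod_{i=1}^3(1-X_i^2)(1-X_i^{-2})$. The constant term then equals $\delta_{\mu,\mu'}$ by the Weyl orthogonality for $\fsl_2^{\oplus3}$. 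This is precisely \eqref{eq:BilinearFormOnBasis}.

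The main obstacle is justifying the extension to infinite sums, i.e.\ continuity of both sides in the $q$-adic (and $t$-) topology. For the Ext pairing, I would argue that the radical $R$ has strictly positive $(q,t)$-bidegree, except for $K$, which acts by zero at level zero. Hence a minimal projective resolution of a module concentrated in degrees $\ge N$ has terms in degrees $\ge N$. So only finitely many terms contribute to each coefficient, and the pairing of a high-degree class with a fixed class is of high degree. For the right-hand side, $\Delta$ is a well-defined power series in $q$ and $t$ whose coefficients are Laurent polynomials in $X$, so $(f\Delta g^\star)_0$ is continuous in the same topology; the $\star$-twist of $g$ only reverses degrees, consistently with skew-linearity. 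With continuity on both sides and agreement on the dual bases, the two sesquilinear forms coincide.
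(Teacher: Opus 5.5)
Your strategy is the paper's own: test on the dual pair $[P_{123}(\mu)]$, $[V_{123}(\mu')]$, cancel $\ch\U(R)\cdot\Delta$ down to $\prod_{i=1}^3(1-X_i^2)(1-X_i^{-2})$, invoke Weyl orthogonality, and extend by skew-linearity. The cancellation is right, but the Weyl orthogonality step is off by a constant, so the check on the dual bases fails. With the product over \emph{all} roots the constant term is $|W|\,\delta$, not $\delta$; this is the Weyl integration formula without its factor $1/|W|$. Here $|W|=8$, so
$\bigl(\prod_{i=1}^3(1-X_i^2)(1-X_i^{-2})\,\ch V_{123}(\mu)\ch V_{123}(\mu')\bigr)_0=8\,\delta_{\mu,\mu'}$.

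Already for $\mu=\mu'=0$ your right-hand side is $\bigl(\prod_{i=1}^3(2-X_i^2-X_i^{-2})\bigr)_0=8$. But $([P_{123}(0)],[V_{123}(0)])=\dim\Hom(P_{123}(0),V_{123}(0))=1$. So your computation proves $(f,g)=\tfrac{1}{8}(f\Delta g^\star)_0$, and the identity with constant $1$ is false as stated. The paper's proof has the same slip, in the orthogonality relations displayed just before \eqref{eq:BilinearFormOnBasis}.

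There are two possible repairs. Either insert the factor $1/8$, or replace the $l=0$ even factor $\prod_i(1-X_i^2)(1-X_i^{-2})$ of $\Delta$ by $\prod_i(1-X_i^{-2})$, which gives exactly $\delta_{\mu,\mu'}$. The two agree because for $F(X)=F(X^{-1})$ one has $\bigl((1-X^2)(1-X^{-2})F\bigr)_0=2\bigl((1-X^{-2})F\bigr)_0$. The rescaling does not affect the self-adjointness theorem or the orthogonality corollary, since both are homogeneous in the pairing.

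A lesser point concerns the extension step. Because of $\star$, both sides are continuous in $g$ only for the $q^{-1}$-adic topology, not the topology you use for $f$. Moreover, neither side is defined on all pairs. For $M=\bigoplus_{n\ge0}V_{123}(0)\sh{n}{0}$ the degree-$0$ part of $\Hom(M,M)$ is infinite-dimensional, and correspondingly $f\Delta g^\star$ with $f=g=[M]$ involves $\sum_{n,m\ge0}q^{n-m}$.

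The identity should therefore be asserted only for pairs where these sums are locally finite, and extended one variable at a time. For $f=[P_{123}(\mu)]$ the product $\ch P_{123}(\mu)\,\Delta$ is a Laurent polynomial in $X$ alone, so extending in $g$ is immediate. Extending in $f$ goes through the Euler characteristic of a projective resolution, and that is where your positivity observation on the $(q,t)$-degrees of $R$ is really used.
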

	
	\subsection{Orthogonality of operators}
	Throughout this subsection $\{i,j,k\}=\{1,2,3\}$, we abbreviate $\pi:=\pi_{(i,j)(k,0)}$, and $M,N$ are objects of $\mathcal{O}^{gr}(\fp_{123})$. Recall that by Lemma~\ref{lem:CoindPreservesO} all the functors below are defined on $\mathcal{O}^{gr}$.
	
	By Proposition~\ref{prop:Adjoint} the functor ${\Lind}_{\fp_{ij}}^{\fp_{123}}$ is left adjoint to $\Res_{\fp_{ij}}^{\fp_{123}}$, so that for every $n \in \mathbb{Z}$
	\[\Hom_{gr}(M[n],\Res_{\fp_{ij}}^{\fp_{123}}N)\simeq
	\Hom_{gr}({\Lind}_{\fp_{ij}}^{\fp_{123}}M[n],N).\]
	Recall also that $\pi$ is an involution, hence for the twist functor of Definition~\ref{def:TwistedModule} one has $\Hom(A^{\pi},B)\simeq \Hom(A,B^{\pi})$: an $\fp_{ij}$-morphism $A^\pi \rightarrow B$ is the same thing as an $\fp_{ij}$-morphism $A \rightarrow B^{\pi^{-1}}=B^{\pi}$. Combining the adjunction for $\Lind$, this twist and the adjunction for $\Rcoind$ we get
	\begin{multline*}
		\Hom_{gr}({\Lind}_{\fp_{ij}}^{\fp_{123}}\circ \pi \circ \Res_{\fp_{ij}}^{\fp_{123}}M[n],N)\simeq  \Hom_{gr}(\pi \circ \Res_{\fp_{ij}}^{\fp_{123}} M[n],\Res_{\fp_{ij}}^{\fp_{123}} N)\\ \simeq 
		\Hom_{gr}(\Res_{\fp_{ij}}^{\fp_{123}}M[n],\pi \circ \Res_{\fp_{ij}}^{\fp_{123}} N)\simeq  \Hom_{gr}(M[n],{\Rcoind}_{\fp_{ij}}^{\fp_{123}}\circ \pi \circ \Res_{\fp_{ij}}^{\fp_{123}} N).
	\end{multline*}
	
	Therefore the following operators on the Grothendieck group are adjoint:
	\begin{equation}\label{eq:AdjointOperators}
		(t \hat O_{A_{ij}}[M],[N])=([{\Lind}_{\fp_{ij}}^{\fp_{123}}\circ \pi \circ \Res_{\fp_{ij}}^{\fp_{123}}][M],[N])= 
		([M],[{\Rcoind}_{\fp_{ij}}^{\fp_{123}}\circ \pi \circ \Res_{\fp_{ij}}^{\fp_{123}}][N]).
	\end{equation}
	
	It remains to identify the operator on the right hand side. It is possible to compute it directly, but we prove a somewhat stronger claim on the level of functors.
	\begin{theoremA}\label{thm:IsomorphismInductionCoinduction}
		There is the following isomorphism of derived functors:
		\[{\Rcoind}_{\fp_{ij}}^{\fp_{123}}\circ \pi \circ \Res_{\fp_{ij}}^{\fp_{123}}\simeq {\Lind}_{\fp_{ij}}^{\fp_{123}}\circ \pi \circ 	\Res_{\fp_{ij}}^{\fp_{123}}\sh{0}{-2}[-2].\]
	\end{theoremA}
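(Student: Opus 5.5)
We will prove the stronger statement \eqref{eq:IndCoind}, namely
\[\Rcoind_{\fp_{ij}}^{\fp_{123}}\simeq \Lind_{\fp_{ij}}^{\fp_{123}}\sh{0}{-2}[-2]\]
as functors on $D^b(\mathcal{O}^{gr}(\fp_{ij}))$. Precomposing with $\pi\circ\Res_{\fp_{ij}}^{\fp_{123}}$ then gives the theorem, since both $\pi$ and $\Res$ are exact and commute with the shifts $\sh{0}{-2}$.

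The plan is to factor both sides through the intermediate subalgebra $\fp_{ij}\subset\fp_{ij}^+\subset\fp_{123}$, as in \eqref{eq:CompositionInductions}, Lemma~\ref{lem:FiniteHomologicalDimension} and Lemma~\ref{lem:CoindPreservesO}:
\[\Lind_{\fp_{ij}}^{\fp_{123}}=\Lind_{\fp_{ij}^+}^{\fp_{123}}\circ \Ind_{\fp_{ij}}^{\fp_{ij}^+},\qquad \Rcoind_{\fp_{ij}}^{\fp_{123}}=\Rcoind_{\fp_{ij}^+}^{\fp_{123}}\circ \coInd_{\fp_{ij}}^{\fp_{ij}^+}.\]
Each side then splits into an odd step and an even step, which we compare separately.

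\emph{Odd step.} Let $\fn=\langle e_{-\alpha_k},e_{-\theta}\rangle$ be the complement, which is purely odd and abelian (odd brackets vanish). Then $\Ind_{\fp_{ij}}^{\fp_{ij}^+}M=\Lambda(\fn)\otimes M$ and $\coInd_{\fp_{ij}}^{\fp_{ij}^+}M=\Lambda(\fn)^*\otimes M$. For an exterior algebra on a finite dimensional odd space there is the standard Frobenius-type isomorphism $\Lambda(\fn)^*\simeq\Lambda(\fn)\otimes\Lambda^{top}(\fn)^*$ of $\U(\fn)$-modules, compatible with the $\fp_{ij}$-action. We will use it to produce a natural isomorphism
\[\coInd_{\fp_{ij}}^{\fp_{ij}^+}M\simeq \Ind_{\fp_{ij}}^{\fp_{ij}^+}\bigl(M\otimes (\Lambda^{2}\fn)^*\bigr).\]
Here $(\Lambda^2\fn)^*$ is one dimensional, of weight $\alpha_k+\theta=2\varepsilon_i+2\varepsilon_j$ and of odd degree $-2$, i.e.\ it contributes the shift $\sh{0}{-2}$. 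The point to verify is that $\fp_{ij}$ acts on $\Lambda^2\fn$ through a character. This holds because $\fn\simeq\fp_{ij}^+/\fp_{ij}$ is a $\fp_{ij}$-module whose top exterior power is one dimensional, with weight given by the $\fh$-action, and the odd generators act trivially on it by degree.

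\emph{Even step.} $\Rcoind_{\fp_{ij}^+}^{\fp_{123}}$ and $\Lind_{\fp_{ij}^+}^{\fp_{123}}$ are Zuckerman-type functors for the two commuting copies $\fsl_2^{(i)},\fsl_2^{(j)}$. We apply \eqref{eq:Sl2InductionCoinduction} once for each copy. This gives
\[\Rcoind_{\fp_{ij}^+}^{\fp_{123}}\simeq \Lind_{\fp_{ij}^+}^{\fp_{123}}\circ\bigl(\Bbbk_{-2\varepsilon_i-2\varepsilon_j}\otimes\bullet\bigr)[-2].\]
To lift Lemma~\ref{lem:Sl2InductionCoinduction} from $\fb_{\fsl_2}\subset\fsl_2$ to $\fp_{ij}^+\subset\fp_{ij}^+ + \langle f_i\rangle$, we realize both functors as relative Lie algebra (co)homology with respect to $\fsl_2^{(i)}$, whose radical part passes through unchanged. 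Concretely, we resolve by modules $\U(\fp_{123})\otimes_{\U(\fp_{ij}^+)}(\cdot)$ and use that the relevant Koszul complexes depend only on the pair $\fb_{\fsl_2}\subset\fsl_2$. The one dimensional twist must be checked to extend to a $\fp_{ij}^+$-character: the weight $-2\varepsilon_i-2\varepsilon_j$ is orthogonal to the remaining $\fsl_2^{(k)}$ and to $\fh$-nonneutral parts only through $\fh$.

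\emph{Combining the steps.} The twist $-2\varepsilon_i-2\varepsilon_j$ from the even step cancels the weight $2\varepsilon_i+2\varepsilon_j$ from the odd step, leaving exactly $\sh{0}{-2}[-2]$. Since the composites are triangulated functors and all the isomorphisms are natural, the isomorphism on the derived category follows.

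\emph{Main obstacle.} The delicate point is the even step. Lemma~\ref{lem:Sl2InductionCoinduction} is stated only for $\fb_{\fsl_2}\subset\fsl_2$ on one dimensional modules, and we need a natural isomorphism of derived functors for the pro-nilpotently extended pair. We will first try to deduce it from Serre duality on $\mathbb{P}^1\times\mathbb{P}^1$, viewing both functors as derived global sections of the associated equivariant bundles with the radical acting fiberwise. As a fallback, we will construct the comparison map on projectives and check that it is an isomorphism on the cohomology of the composition factors $V_{ij}(\mu)$ via part (2) of the lemma.

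As a consistency check at the level of the Grothendieck group, $F_{ij}$ conjugated by $X\mapsto X^{-1}$-type symmetries must reproduce $[\Rcoind]=t^{2}F_{ij}$.
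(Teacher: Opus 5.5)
Your proposal is correct and follows the paper's proof essentially step for step. You reduce to \eqref{eq:IndCoind}, factor both functors through $\fp_{ij}^+$, use $\Lambda^{top}(\fp_{ij}^+/\fp_{ij})$ to rewrite $\coInd_{\fp_{ij}}^{\fp_{ij}^+}$ as $\Ind_{\fp_{ij}}^{\fp_{ij}^+}$ twisted by $V(2\varepsilon_i+2\varepsilon_j)\sh{0}{-2}$, apply \eqref{eq:Sl2InductionCoinduction} once for each of $\fsl_2^{(i)},\fsl_2^{(j)}$, and cancel the two weight twists; the lift of the $\fsl_2$ lemma to the pair $\fp_{ij}^+\subset\fp_{123}$, which you single out as the main obstacle, is simply asserted in the paper. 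One small slip: by the convention \eqref{eq:ShiftDefinition} your closing consistency check should read $[\Rcoind_{\fp_{ij}}^{\fp_{123}}]=t^{-2}F_{ij}$, not $t^{2}F_{ij}$.
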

	
	\begin{proof}
		It is enough to prove that 
		\begin{equation}\label{eq:IndCoindComparison}
			{\Rcoind}_{\fp_{ij}}^{\fp_{123}}\simeq {\Lind}_{\fp_{ij}}^{\fp_{123}}\sh{0}{-2}[-2].
		\end{equation}
		
		{\it Step 1: factorization.} Recall the intermediate superalgebra $\fp_{ij}^+$ of the proof of Lemma~\ref{lem:InductionOperator}. The functors ${\Ind}_{\fp_{ij}}^{\fp_{ij}^+}$ and ${\coInd}_{\fp_{ij}}^{\fp_{ij}^+}$ are exact, because the quotient $\fn:=\fp_{ij}^+/\fp_{ij}$ is purely odd of dimension $(0|2)$ and no integrable quotient or submodule has to be taken. Therefore
		\begin{equation}\label{eq:FactorizationOfBoth}
			{\Lind}_{\fp_{ij}}^{\fp_{123}}\simeq {\Lind}_{\fp_{ij}^+}^{\fp_{123}}\circ {\Ind}_{\fp_{ij}}^{\fp_{ij}^+},\qquad {\Rcoind}_{\fp_{ij}}^{\fp_{123}} \simeq {\Rcoind}_{\fp_{ij}^+}^{\fp_{123}}\circ {\coInd}_{\fp_{ij}}^{\fp_{ij}^+}.
		\end{equation}
		
		{\it Step 2: the odd part.} Note that $\fn$ is a two dimensional $\fp_{ij}$-module (it is the quotient $\fp_{ij}^+/\fp_{ij}$ which is a module. The span of $e_{-\alpha_k}$ and $e_{-\alpha_1-\alpha_2-\alpha_3}$ inside $\fp_{ij}^+$ is not a $\fp_{ij}$-submodule, for instance $[e_{2\varepsilon_i},e_{-\alpha_k}]$ is proportional to $e_{\alpha_j}$). Its top exterior power is one dimensional of weight $-\alpha_k-\alpha_1-\alpha_2-\alpha_3=-2\varepsilon_i-2\varepsilon_j$ and of odd degree $2$, i.e.\ $\Lambda^{top}\fn \simeq V_{ij}(-2\varepsilon_i-2\varepsilon_j)\sh{0}{2}$. By the Poincar\'e--Birkhoff--Witt theorem $\U(\fp_{ij}^+)\simeq \U(\fp_{ij})\otimes \Lambda(\fn)$, whence
		\begin{equation} \label{eq:OddInduction}
			{\Ind}_{\fp_{ij}}^{\fp_{ij}^+}(\bullet)\simeq {\coInd}_{\fp_{ij}}^{\fp_{ij}^+}\bigl(\bullet \otimes V_{ij}(-2\varepsilon_i-2\varepsilon_j)\sh{0}{2}\bigr).
		\end{equation}
		Indeed, \eqref{eq:OddInduction} follows from the isomorphisms of $\fh$-modules ${\Ind}_{\fp_{ij}}^{\fp_{ij}^+}M \simeq \Lambda(\fn)\otimes M$ and ${\coInd}_{\fp_{ij}}^{\fp_{ij}^+}M \simeq \Lambda(\fn)^*\otimes M$, together with $\Lambda(\fn)\simeq \Lambda(\fn)^*\otimes \Lambda^{top}\fn$. Equivalently,
		\begin{equation}\label{eq:OddCoinduction}
			{\coInd}_{\fp_{ij}}^{\fp_{ij}^+}(\bullet)\simeq {\Ind}_{\fp_{ij}}^{\fp_{ij}^+}\bigl(\bullet \otimes V_{ij}(2\varepsilon_i+2\varepsilon_j)\sh{0}{-2}\bigr).
		\end{equation}
		
		{\it Step 3: the even part.} The complement of $\fp_{ij}^+$ in $\fp_{123}$ is spanned by $f_i$ and $f_j$, so that $\widetilde\Ind_{\fp_{ij}^+}^{\fp_{123}}$ and $\widetilde\coInd_{\fp_{ij}^+}^{\fp_{123}}$ are the functors attached to the two commuting copies of $\fsl_2$ indexed by $i$ and $j$. Applying \eqref{eq:Sl2InductionCoinduction} twice we get
		\begin{equation}\label{eq:EvenIndCoind}
			{\Rcoind}_{\fp_{ij}^+}^{\fp_{123}} \simeq {\Lind}_{\fp_{ij}^+}^{\fp_{123}}\circ \bigl(V(-2\varepsilon_i-2\varepsilon_j)\otimes \bullet \bigr)[-2].
		\end{equation}
		
		{\it Step 4: the two twists cancel.} The modules $V_{ij}(\pm(2\varepsilon_i+2\varepsilon_j))$ are one dimensional, so tensoring by them commutes with induction. Substituting \eqref{eq:OddCoinduction} and \eqref{eq:EvenIndCoind} into the second identity of \eqref{eq:FactorizationOfBoth} we obtain
		\begin{multline*}
			{\Rcoind}_{\fp_{ij}}^{\fp_{123}}(\bullet)\simeq {\Lind}_{\fp_{ij}^+}^{\fp_{123}}\Bigl({\Ind}_{\fp_{ij}}^{\fp_{ij}^+}\bigl(\bullet \otimes V(2\varepsilon_i+2\varepsilon_j)\sh{0}{-2}\bigr)\otimes V(-2\varepsilon_i-2\varepsilon_j)\Bigr)[-2]\\
			\simeq {\Lind}_{\fp_{ij}^+}^{\fp_{123}}{\Ind}_{\fp_{ij}}^{\fp_{ij}^+}\bigl(\bullet \otimes V(2\varepsilon_i+2\varepsilon_j)\otimes V(-2\varepsilon_i-2\varepsilon_j)\bigr)\sh{0}{-2}[-2]\simeq {\Lind}_{\fp_{ij}}^{\fp_{123}}(\bullet)\sh{0}{-2}[-2],
		\end{multline*}
		which is \eqref{eq:IndCoindComparison}.
	\end{proof}
	
	\begin{corollary}\label{cor:CoindOperator}
		\[[{\Rcoind}_{\fp_{ij}}^{\fp_{123}}\circ \pi \circ \Res_{\fp_{ij}}^{\fp_{123}}]=t^{-1} \hat O_{A_{ij}}.\]
	\end{corollary}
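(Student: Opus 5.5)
The plan is to deduce the corollary from Theorem~\ref{thm:IsomorphismInductionCoinduction} by passing to Grothendieck groups and then quoting Theorem~B. The theorem identifies the left-hand functor with $\mathcal{F}_{ij}\sh{0}{-2}[-2]$, where $\mathcal{F}_{ij}$ is the functor of Definition~\ref{def:Fdefinition}. We first note that both sides have finite homological dimension, by Lemma~\ref{lem:FiniteHomologicalDimension} and Lemma~\ref{lem:CoindPreservesO}. Both are defined on $\mathcal{O}^{gr}(\fp_{123})$, again by Lemma~\ref{lem:CoindPreservesO}. Hence their classes are well defined operators on $K_0(\mathcal{O}^{gr}(\fp_{123}))$, computed as alternating sums of the (co)homology as in Subsection~\ref{ssec:GrothendieckGroups}. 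Isomorphic derived functors induce the same operator, so
\[[{\Rcoind}_{\fp_{ij}}^{\fp_{123}}\circ \pi \circ \Res_{\fp_{ij}}^{\fp_{123}}]=[\mathcal{F}_{ij}\sh{0}{-2}[-2]].\]

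Next we evaluate the effect of the two shifts on classes, using the rules stated after \eqref{eq:ShiftDefinition}. The internal shift gives $[M\sh{0}{-2}]=(-t)^{-2}[M]=t^{-2}[M]$. The homological shift $[-2]$ contributes $(-1)^{2}=1$, since $[M^\bullet[1]]=-[M^\bullet]$ and the identification $K_0(D^b)\simeq K_0$ is by the Euler characteristic. Both shifts commute with the functors, so the right-hand class equals $t^{-2}[\mathcal{F}_{ij}]$.

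Finally, Theorem~B gives $[\mathcal{F}_{ij}]=t\,\hat O_{A_{ij}}$, which yields $t^{-2}\cdot t\,\hat O_{A_{ij}}=t^{-1}\hat O_{A_{ij}}$, as claimed.

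The only delicate point is the sign and direction bookkeeping. We must check that the left derived functor (in nonpositive degrees) and the right derived one (in nonnegative degrees) yield the same sign convention once shifted by $[-2]$. This is consistent because the shift is even. We must also check that the shift $\sh{0}{-2}$ is applied to the output with the convention $q^l(-t)^r$, giving $t^{-2}$ rather than $t^{2}$. As a sanity check, we would verify the exponent on a single irreducible $V_{ij}(\mu)$ with large $\mu_i,\mu_j$ using Lemma~\ref{lem:Sl2InductionCoinduction}.
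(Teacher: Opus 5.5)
Your proposal is correct and follows the paper's own proof: Theorem~\ref{thm:IsomorphismInductionCoinduction} turns the left-hand side into $(-t)^{-2}(-1)^{-2}[\mathcal{F}_{ij}]=t^{-2}[\mathcal{F}_{ij}]$, and Theorem~\ref{thm:MainTheorem} then gives $t^{-1}\hat O_{A_{ij}}$. Your added remarks on finite homological dimension (Lemmas~\ref{lem:FiniteHomologicalDimension} and~\ref{lem:CoindPreservesO}) spell out why both classes are well defined, which the paper leaves implicit at this point.
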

	\begin{proof}
		By Theorem~\ref{thm:IsomorphismInductionCoinduction} the left hand side equals $(-t)^{-2}(-1)^{-2}[\mathcal{F}_{ij}]$, the first factor coming from the shift $\sh{0}{-2}$ by \eqref{eq:ShiftDefinition} and the second one from the homological shift $[-2]$. Both equal $t^{-2}$ and $1$ respectively. Since $[\mathcal{F}_{ij}]=t\hat O_{A_{ij}}$ by Theorem~\ref{thm:MainTheorem}, the claim follows.
	\end{proof}
	
	\begin{theoremC}\label{thm:SelfAdjointness}
		The operators $\hat O_{A_{ij}}$ are self-adjoint with respect to the pairing $(-,-)$ on $K_0(\mathcal{O}^{gr}(\fp_{123}))$:
		\begin{equation*}
			(\hat O_{A_{ij}}f,g)=(f,\hat O_{A_{ij}}g).
		\end{equation*}
	\end{theoremC}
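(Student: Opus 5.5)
The plan is to combine the adjunction identity \eqref{eq:AdjointOperators} with Corollary~\ref{cor:CoindOperator}, and then remove the scalar factors using the skew-bilinearity of the pairing. Take $f=[M]$ and $g=[N]$ for $M,N\in\mathcal{O}^{gr}(\fp_{123})$. By Theorem~\ref{thm:MainTheorem}, the left side of \eqref{eq:AdjointOperators} is $(t\hat O_{A_{ij}}[M],[N])$. By Corollary~\ref{cor:CoindOperator}, the right side is $([M],t^{-1}\hat O_{A_{ij}}[N])$. This gives
\[(t\,\hat O_{A_{ij}}f,g)=(f,t^{-1}\hat O_{A_{ij}}g).\]

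The pairing is skew-linear: $(f\,t^r,g)=(f,g)\,t^r=(f,g\,t^{-r})$. The scalar $t$ on the left therefore comes out as a factor $t$, and the scalar $t^{-1}$ in the second slot comes out as $(t^{-1})^\star=t$. Both sides thus equal $t$ times the desired expressions, and cancelling $t$ (a unit in $\mathbb{Z}[t^{\pm1}]((q^{\frac12}))$) gives $(\hat O_{A_{ij}}f,g)=(f,\hat O_{A_{ij}}g)$ for classes of modules.

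To get the statement on all of $K_0$, I would extend by additivity and skew-linearity in each argument. Both sides are additive in $f$ and in $g$, and $\hat O_{A_{ij}}$ is $\mathbb{Z}[q^{\pm\frac12},t^{\pm1}]$-linear. Because the two sides transform identically under $f\mapsto q^lt^rf$ and $g\mapsto q^lt^rg$, the identity passes from module classes to their $\mathbb{Z}[q^{\pm\frac12},t^{\pm1}]$-combinations. Extending further to the completion in $q$ requires the pairing to be continuous in the $q$-adic sense. This holds because the radical has positive $q,t$-degrees, as noted after the definition of the pairing.

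The main point needing care is that the adjunction chain underlying \eqref{eq:AdjointOperators} is valid as an identity of graded Euler characteristics on derived categories. Two things are needed for this.

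\begin{enumerate}
\item All the functors must land in $\mathcal{O}^{gr}$ and have finite homological dimension. This is Lemma~\ref{lem:FiniteHomologicalDimension} and Lemma~\ref{lem:CoindPreservesO}.
\item The derived adjunctions must hold. They follow from Proposition~\ref{prop:Adjoint} via projective resolutions in $\mathcal{O}^{gr}(\fp_{ij})$ (Lemma~\ref{lem:Projectivity}) and the corresponding injective-side argument.
\end{enumerate}

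Granting these, the isomorphisms of graded $\Hom$ spaces for all shifts $[n]$ give equality of the alternating sums that define $(-,-)_{\Ext}$, and hence \eqref{eq:AdjointOperators}.
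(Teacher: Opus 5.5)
Your argument is the paper's proof: combine \eqref{eq:AdjointOperators} with Corollary~\ref{cor:CoindOperator} to get $(t\hat O_{A_{ij}}f,g)=(f,t^{-1}\hat O_{A_{ij}}g)$, use skew-linearity to pull $t$ out of the first slot and $(t^{-1})^\star=t$ out of the second, and cancel $t$. Your extra paragraph on passing to the $q$-adic completion is not needed, because \eqref{eq:AdjointOperators} already holds for arbitrary objects $M,N$ and every element of $K_0$ is a difference $[M]-[M']$ of classes of objects. Moreover, the continuity you claim fails in the skew-linear second slot, where $q^n g$ contributes $q^{-n}$.
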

	\begin{proof}
		Combining \eqref{eq:AdjointOperators} with Corollary~\ref{cor:CoindOperator} we get $(t\hat O_{A_{ij}}f,g)=(f,t^{-1}\hat O_{A_{ij}}g)$. The pairing is $\mathbb{Z}[t^{\pm 1}]((q^{\pm \frac{1}{2}}))$-linear in the first argument and skew-linear in the second one, so the left hand side equals $t(\hat O_{A_{ij}}f,g)$ and the right hand side equals $t(f,\hat O_{A_{ij}}g)$. Cancelling $t$ gives the claim.
	\end{proof}
	
	\begin{corollary}\label{cor:Orthogonality}
		Let $\Psi_{i_1,i_2,i_3}$ and $\Psi_{j_1,j_2,j_3}$ be rank $2$ Macdonsld polynomials. If $(i_1,i_2,i_3) \neq (j_1,j_2,j_3)$, then they are orthogonal, i. e. 
        \[(\Psi_{i_1,i_2,i_3},\Psi_{j_1,j_2,j_3})=0.\]
	\end{corollary}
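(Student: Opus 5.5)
The plan is the standard argument that eigenvectors of a self-adjoint operator with distinct eigenvalues are orthogonal, adapted to the skew-bilinear pairing $(f,g)=(f\Delta g^\star)_0$ of Lemma~\ref{lem:ExtPairingExplicit}. By \cite{ArthamonovShakirov-2019} the polynomials $\Psi_{\mathbf{i}}$ are joint eigenfunctions of $\hat O_{A_{12}},\hat O_{A_{13}},\hat O_{A_{23}}$:
\[\hat O_{A_{ij}}\Psi_{\mathbf{i}}=E_{ij}(\mathbf{i})\Psi_{\mathbf{i}},\]
with eigenvalues $E_{ij}(\mathbf{i})\in\mathbb{Z}[q^{\pm\frac12},t^{\pm1}]$ (or a suitable completion) that are explicit monomial-type expressions in $q$ and $t$. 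The $\Psi_{\mathbf{i}}$ are symmetric under $s_1,s_2,s_3$, so they define elements of $K_0(\mathcal{O}^{gr}(\fp_{123}))$ (after clearing denominators in $q,t$ if necessary). The pairing is extended by linearity in the first argument and $\star$-linearity in the second.

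Apply Theorem~\ref{thm:SelfAdjointness} with $f=\Psi_{\mathbf{i}}$ and $g=\Psi_{\mathbf{j}}$. Using linearity in the first slot and skew-linearity in the second, we get
\[E_{ij}(\mathbf{i})\,(\Psi_{\mathbf{i}},\Psi_{\mathbf{j}})=(\Psi_{\mathbf{i}},\hat O_{A_{ij}}\Psi_{\mathbf{j}})=E_{ij}(\mathbf{j})^\star\,(\Psi_{\mathbf{i}},\Psi_{\mathbf{j}}).\]
Hence $\bigl(E_{ij}(\mathbf{i})-E_{ij}(\mathbf{j})^\star\bigr)(\Psi_{\mathbf{i}},\Psi_{\mathbf{j}})=0$ for each of the three pairs $\{i,j\}$. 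The coefficient ring $\mathbb{Z}[t^{\pm1}]((q^{\pm\frac12}))$ is a domain (Laurent series over a Laurent polynomial ring), so it suffices to find one pair $\{i,j\}$ with $E_{ij}(\mathbf{i})\neq E_{ij}(\mathbf{j})^\star$ whenever $\mathbf{i}\neq\mathbf{j}$.

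The main obstacle is this eigenvalue check, and specifically the $\star$ it involves. I will take the explicit eigenvalues from \cite{ArthamonovShakirov-2019}. They have the form of $A_1$ Macdonald eigenvalues, something like $E_k(\mathbf{i})=t^{-1}q^{-(i_k+1)/2}\cdot(\dots)+t\,q^{(i_k+1)/2}\cdot(\dots)$, depending on the $k$-th index, possibly also through the admissibility of the triple. The first thing to verify is that these eigenvalues are $\star$-invariant, i.e.\ invariant under $q\mapsto q^{-1},t\mapsto t^{-1}$. This is plausible because each eigenvalue is a sum of an expression and its $\star$-image, mirroring the $a\leftrightarrow -a$ symmetry of $C^{a,b}_{i,j}$. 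Granting that, the condition becomes $E_{ij}(\mathbf{i})\neq E_{ij}(\mathbf{j})$. This follows by comparing leading $q$-exponents: the triple $\mathbf{i}$ is recovered from the three spectra, since $i_k$ determines the $q$-degree of $E_{ij}$. If invariance fails, I would instead rescale by a $\star$-symmetric normalization (for instance, use $t^{1/2}$ factors or $\mathcal{G}_{ij}$) so that the eigenvalue of the normalized self-adjoint operator is $\star$-invariant, and then run the same argument.

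Finally, there is a convergence caveat. The pairing of two polynomials is given by a formal constant term of $f\Delta g^\star$, and it lies in the stated ring because $\Delta$ expands in positive powers of $q$ and $t$, with the $l=0$ factors expanded in $t$. So no analytic issue arises, and the corollary follows from $(\Psi_{\mathbf{i}},\Psi_{\mathbf{j}})=0$.
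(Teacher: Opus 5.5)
Your proposal is correct and follows the paper's proof, which is exactly the chain $\lambda_{ij}(f,g)=(\hat O_{A_{ij}}f,g)=(f,\hat O_{A_{ij}}g)=\mu_{ij}^{\star}(f,g)$, obtained from Theorem~\ref{thm:SelfAdjointness} and skew-linearity in the second slot; the paper leaves implicit the step you isolate, namely that $\lambda_{ij}\neq\mu_{ij}^{\star}$ for some pair. That step holds without any renormalization, because in this normalization the eigenvalues have the form $tq^{m/2}+t^{-1}q^{-m/2}$ (for instance $\hat O_{A_{ij}}1=(t+t^{-1})\cdot 1$ and $\hat O_{A_{ij}}(X_i+X_i^{-1})=(tq^{1/2}+t^{-1}q^{-1/2})(X_i+X_i^{-1})$, directly from \eqref{eq:Cab}), so they are $\star$-invariant and separate distinct labels; the pairing values actually contain power series in $t$ coming from the $l=0$ odd factors of $\Delta$, but the value ring is still a domain, so your cancellation is valid.
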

	\begin{proof}
		By Theorem~\ref{thm:SelfAdjointness}, $\lambda_{ij}(f,g)=(\hat O_{A_{ij}}f,g)=(f,\hat O_{A_{ij}}g)=\mu_{ij}^{\star}(f,g)$, the last equality by the skew-linearity of the pairing in the second argument.
	\end{proof}
	
    \section*{Acknowledgements}
    S.A. would like to thank Shamil Shakirov for a multi-year collaboration that resulted in \cite{ArthamonovShakirov-2019} that became a prerequisite for the current manuscript. The potential connection between \cite{ArthamonovShakirov-2019} and the combinatorics of the root system of $D(2|1,\alpha)$ was brought to the attention of S.A. by Junichi Shiraishi during his visit to BIMSA in December 2024. The work of S.A. was supported by a grant of the Beijing Natural Science Foundation IS-25025.
    
    \textit{AI statement:} A large variety of AI tools was used during all stages of work on the project. DeepSeek v3.2 and v4, Gemini 3.1 Pro, Kimi K2.6 and K3, and Claude Opus 4.8, Opus 5 and Fable 5 were used as coding assistants in C++ and Mathematica for calculations that resulted in the starting character formulas. Claude Opus 5, Claude Fable 5 and Kimi K3 were used while working on the manuscript, but only to improve the clarity and flow of text that had already been written manually. In particular, the TikZ code for Figure \ref{fig:RootSystem} was produced by K3 based on the blackboard photograph and refined by Claude Opus 5. All proofs and logical content were inserted manually. Before finalizing the manuscript, the use of AI tools was suspended, so that the final version of the text was proofread and edited by the authors alone.
    
	\printbibliography
	
\end{document}